\newtheorem{thm}{Theorem}
\newtheorem{lem}{Lemma}
\newtheorem{theorem}{Theorem}[section]
\newtheorem{proposition}[theorem]{Proposition}
\newtheorem{corollary}[theorem]{Corollary}
\newtheorem{lemma}[theorem]{Lemma}
\theoremstyle{definition}
\newtheorem{definition}[theorem]{Definition}
\newtheorem{remark}[theorem]{Remark}
\newcommand{\ZZ}{\mathbb{Z}}
\DeclareMathOperator{\Aut}{Aut}
\DeclareMathOperator{\Ker}{Ker}
\newcommand{\mgcon}{\sim_{M(G)}}
\title{Conjugacy in Miller's Groups}
\author{Conan Gillis}
\begin{document}
\newcommand{\numtheta}{\#_\theta}
\maketitle

\begin{abstract}
In 1971 C.F.\ Miller associated to every finitely presented group $G$ a free-by-free group $M(G)$ known as the  Miller Machine, whose conjugacy problem is closely related to the conjugacy and word problems of $G$. We quantify this relationship, and look to fully understand the conjugacy problem of $M(G)$; namely, we reduce the conjugacy problem in $M(G)$ to a strong form of list conjugacy in $G$, which we term iso-computational list conjugacy. As an application, we show that if $G$ is finite, the conjugacy problem for $M(G)$ is in $\mathsf{PSPACE}$.
\end{abstract}
% \keywords{Word Problem for Groups; Conjugacy Problem for Groups; Miller Machines }

% \ccode{Mathematics Subject Classification 2010: }

\section{Introduction} 

Miller Machines, introduced in \cite[pg.\ 25]{Miller71}, give examples of residually finite groups with unsolvable conjugacy problem. Given a finite presentation of a group $G=\langle X\mid R\rangle$, the Miller Machine $M(G)$ is generated by the set $X$, a new set of letters $\Theta=\{\theta_\alpha:\alpha\in X\cup R\}$, and a further letter $q$, subject to the defining relations:
$$\begin{matrix}\theta_\alpha x&=&x\theta_\alpha&& x\in X,&\alpha\in X\cup R\\ \theta_xxq&=&qx\theta_x&&x\in X\ &\\ \theta_rq&=&qr\theta_r&&r\in R.\end{matrix}$$

Conjugation in $M(G)$ has an interesting relationship with the conjugacy and word problems of $G$: %In particular, Miller proved the following lemma (see \cite[Lemma III.A.4(2)]{Miller71}).   
\begin{lem}[Miller \cite{Miller71}, Lemma III.A.4(2)] \label{millerlemma}    Let $u_1,u_2,v_1,v_2$ be words on $X^{\pm1}$. Then $u_1qu_2$ and $v_1qv_2$ are conjugate in $M(G)$ if and only if $u_1u_2$ and $v_1v_2$ are conjugate in $G$.
\end{lem}
An immediate consequence is

\begin{lem}[Sapir \cite{sapir_2011}]\label{Sapirlemma}
    For all words $u$ on $X^{\pm1}$, $qu$ and $q$ are conjugate if and only if~$u$~represents the identity of $G$.
\end{lem}
Borovik, Miasnikov, and Remeslennikov studied the conjugacy problem in $M(G)$ further in \cite{BMR}. They call a set $R\subseteq M(G)$ \textit{strongly negligible} if, letting $S_k$ be the sphere in $M(G)$ of radius $k$, $$\frac{|R\cap S_k|}{|S_k|}\leq \delta^k$$ for some fixed positive constant $\delta<1.$ They then prove the following theorem:
\begin{theorem}\label{BMRTheorem} The subgroup $\langle X,q\rangle_{M(G)}$ is strongly negligible. Moreover, there exists an algorithm which, given any elements $x,y$ of the set $M(G)\smallsetminus~\langle X,q\rangle_{M(G)}$, decides whether $x$ and $y$ are conjugate in $M(G)$. 
\end{theorem}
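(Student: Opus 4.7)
The plan is to exploit the fact that $M(G)$ has an evident free-by-free structure. Inspection of the defining relations shows that $M(G) \cong B \rtimes F(\Theta)$, where $B = F(X \cup \{q\})$ is the free group of rank $|X|+1$ and each letter $\theta_\alpha \in \Theta$ acts on $B$ as an automorphism $\phi_{\theta_\alpha}$ that fixes $X$ pointwise and sends $q \mapsto x^{-1}qx$ when $\alpha = x \in X$, or $q \mapsto qr$ when $\alpha = r \in R$. These automorphisms are not inner on $B$, since they fix every element of $X$ while conjugating $q$ only, and the relation $\theta_\alpha w \theta_\alpha^{-1} = \phi_{\theta_\alpha}(w)$ recovers every defining relation of the given presentation. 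Under this identification, $\langle X,q\rangle_{M(G)} = B$ is exactly the kernel of the natural retraction $\pi\colon M(G) \twoheadrightarrow F(\Theta)$ that kills $X$ and $q$ and fixes $\Theta$. Both conclusions of the theorem will be derived from this retraction.

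For strong negligibility, I would bound $|B \cap S_k|$ by pushing forward along $\pi$. Fixing the symmetric generating set $S = X \cup \{q\} \cup \Theta$, the map $\pi$ is $1$-Lipschitz, so every element of $B \cap S_k$ is represented by some length-$k$ word in $S^{\pm 1}$ whose free reduction in $F(\Theta)$ is trivial. Assuming $|X|+|R| \geq 2$ (the small cases are handled by hand, and are essentially vacuous since $G$ is then trivial or cyclic), $F(\Theta)$ is non-abelian free, and a Chernoff-style estimate on the number of $\Theta$-syllables appearing in a random length-$k$ word, combined with the non-amenability of $F(\Theta)$, shows that the proportion of such words with trivial projection decays at rate $\rho^k$ with $\rho < 1$. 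Coupling this with the lower bound $|S_k| \geq (2|\Theta|-1)^k$ coming from the splitting $F(\Theta) \hookrightarrow M(G)$ yields $|B \cap S_k|/|S_k| \leq \delta^k$ for an explicit $\delta<1$ computable from $|X|$ and $|R|$.

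For the conjugacy algorithm outside $B$, write each element of $M(G) \setminus B$ uniquely as $b\theta$ with $\theta \neq 1$. The semidirect-product multiplication shows that $(b_1,\theta_1)$ and $(b_2,\theta_2)$ are conjugate in $M(G)$ if and only if there exist $\tau \in F(\Theta)$ and $d \in B$ with $\tau \theta_1 \tau^{-1} = \theta_2$ in $F(\Theta)$ and $b_2 = d\,\phi_\tau(b_1)\,\phi_{\theta_2}(d)^{-1}$ in $B$. The first condition is the classical conjugacy problem in a free group, hence decidable, and its solution set (when non-empty) is a coset $\tau_0 \langle \theta_1 \rangle$ of the cyclic centralizer of $\theta_1$. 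The second condition, for each fixed $n \in \ZZ$ with $\tau = \tau_0 \theta_1^n$, is a $\phi_{\theta_2}$-twisted conjugacy question in the free group $B$; taking the union over $n$ turns it into an orbit problem combining two automorphisms of $B$. Individual twisted conjugacy in a free group is decidable by Brinkmann's theorem, and the two-automorphism orbit problem is handled by the Bogopolski--Martino--Maslakova--Ventura machinery, so threading these tools together produces the desired algorithm.

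The main obstacle, and the technical heart of the proof, is exactly this uniform handling of the $\ZZ$-quantifier in the last step: one must decide membership in a $\phi_{\theta_1}$-orbit of $\phi_{\theta_2}$-twisted conjugacy classes rather than in a single twisted class, and this is where the hypothesis $\theta_1 \neq 1$ is genuinely used. By contrast, once the semidirect product structure has been identified, strong negligibility is essentially a bookkeeping and counting exercise, and all of the subtlety of the conjugacy problem in $M(G)$ (including Lemma \ref{millerlemma} and Lemma \ref{Sapirlemma}, where undecidability lives) is confined to the subgroup $B$ itself.
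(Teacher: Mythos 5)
This theorem is not proved in the paper. It is stated as a cited result from Borovik, Miasnikov, and Remeslennikov (the reference \cite{BMR}, including the specific citation ``Theorem 4.9'' later in Section~\ref{mainalgorithm}), so there is no proof in the paper to compare against. I will therefore assess your proposal on its own terms.

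Your structural setup is correct and matches the paper's discussion of $M(G)$: the semidirect product decomposition $M(G)\cong \langle X,q\rangle_{M(G)}\rtimes\langle\Theta\rangle_{M(G)}$, the retraction $\pi$ killing $X\cup\{q\}$, and the conjugacy criterion $\tau\theta_1\tau^{-1}=\theta_2$ together with $b_2=d\,\phi_\tau(b_1)\,\phi_{\theta_2}(d)^{-1}$ are all right. But there is a genuine gap at exactly the step you yourself flag as the ``technical heart'': you do not actually dispose of the $\ZZ$-quantifier. Pointing to ``the Bogopolski--Martino--Maslakova--Ventura machinery'' is not a proof; orbit decidability is a hypothesis those results require, not a free conclusion. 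What makes the quantifier tractable here is a specific observation you omit: if $x\in B$ and $\gamma\in\Aut(B)$, then $\gamma(x)$ is $\gamma$-twisted conjugate to $x$ (take $d=x^{-1}$), so the entire $\gamma$-orbit of any element lies in a single $\gamma$-twisted conjugacy class. Writing $\theta_2$ as a power $z'^m$ of its root $z'$ and noting that the centralizer coset is $\tau_0\langle z\rangle$ (the root of $\theta_1$, not $\langle\theta_1\rangle$ as you wrote), the family $\phi_{\tau_0 z^n}(b_1)$ becomes $\phi_{z'}^n\phi_{\tau_0}(b_1)$, and since $\phi_{\theta_2}=\phi_{z'}^m$, only the residues $n\bmod m$ give distinct twisted classes. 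The search thus collapses to finitely many twisted conjugacy checks in $B$, each decidable. This is where $\theta_1\neq 1$ (hence $m\geq 1$) is used, and it is the argument you needed rather than a black-box appeal.

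The strong negligibility half is sketched more loosely: you need to compare a count of group \emph{elements} in the sphere $S_k$ (not words), handle the $|\Theta|=1$ case where $F(\Theta)$ is amenable, and replace the appeal to a ``Chernoff-style estimate'' with an actual bound (e.g.\ via the spectral radius of the projected random walk or a direct cogrowth estimate for the kernel of $\pi$). These are repairable bookkeeping issues, but as written that part is also not a complete proof.
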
 
%Echoing the previous lemmas, 
This result shows that unsolvability of the conjugacy problem for $M(G)$ can only come from  $\langle X, q  \rangle$. In addition, the fact that $\langle X,q\rangle_{M(G)}$ is strongly negligible shows that, in a certain natural sense, the conjugacy problem is solvable for ``almost all" of $M(G)$. 

Here we complete the picture.  We find a decision problem in $G$, extending  Lemmas~\ref{millerlemma} and \ref{Sapirlemma}, which is equivalent to the conjugacy problem in $M(G)$ restricted to elements of $\langle X,q\rangle_{M(G)}$. Combining this with Theorem~\ref{BMRTheorem}, we have that our decision problem in $G$ is equivalent to the conjugacy problem in $M(G)$.

%Filling in the rest of the picture, our results below generalize Lemmas~\ref{millerlemma} and \ref{Sapirlemma} by finding a decision problem in $G$ which is equivalent to the conjugacy problem in $M(G)$ restricted to elements of $\langle X,q\rangle_{M(G)}$. Combining this with the above, this implies that our decision problem in $G$ is equivalent to the conjugacy problem in $M(G)$. 
% Our results generalize these lemmas. Also, we give a quantitative analogue for Lemma \ref{Sapirlemma}.

It is worth saying at the outset that the notation $M(G)$ depends on the presentation for $G$. Following \cite{sapir_2011}, however, we still use this notation with the understanding that we have fixed some finite presentation for $G$ beforehand. Our definitions and results below likewise assume a fixed presentation for $G$.

The statement of these results requires some additional notation; for precise definitions, see Section \ref{Preliminaries}. Firstly, if $x,y\in M(G)$ are conjugate via some $\gamma$ which can be written without $q$ as a factor, we write $x\approx y$. Secondly, let $\textbf{u}=~(u_1,...,u_k)$ and $\textbf{v}=(v_1,...,v_k)$ be $k$-tuples of elements of the free group $F(X)$ on $X$, and let $\sigma=(\sigma_1,...,\sigma_k)$~be a $k$-tuple of elements of $\{\pm1\}$. Throughout this paper, we adopt the notational convention that $\sigma_{k+1}=\sigma_1$, $u_{k+1}=u_k$, and $v_{k+1}=v_1$. 

With this convention in mind, we impose the condition that $\sigma_i=-\sigma_{i+1}$ implies that $u_i$ and $v_i$ are not both the identity of $F(X)$. We write $\textbf{u}\overset{\sigma}{\sim}\textbf{v}$ if there exists words $w,\varepsilon$ on $X$ such that $\varepsilon$ represents the identity of $G$ and $w$ and $w\varepsilon$ satisfy the following:

\begin{itemize}
    \item if $\sigma_i=\sigma_{i+1}=1$, then $w\varepsilon u_i w^{-1}= v_i$
    \item if $\sigma_i=1,\sigma_{i+1}=-1$, then $w\varepsilon u_i \varepsilon^{-1}w^{-1}= v_i$
    \item if $\sigma_{i}=-1,\sigma_{i+1}=1$, then $wu_iw^{-1}=v_i$
    \item if $\sigma_i=-1,\sigma_{i+1}=-1$, then $wu_i\varepsilon^{-1}w^{-1}= v_i$
\end{itemize} where all equalities between words are free. We call this relation \textit{iso-computational list conjugacy.} Note that, if $\sigma_i=-\sigma_{i+1}$ and both $u_i$ and $v_i$ are the trivial word, that the second and third bullet points immediately become vacuous. This, along with some technical details in the proofs below (see Theorem \ref{conjwithoutthetas} and its ancillary results), is the reason for our additional condition.

With this notation in hand, we can state our first main result.
 \begin{thm}
     The conjugcacy problem for $M(G)$ is decidable if and only if there is an algorithm deciding iso-computational list conjugacy in $G$.
 \end{thm}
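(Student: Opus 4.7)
The plan is to combine Theorem~\ref{BMRTheorem} with a reduction of conjugacy in $\langle X,q\rangle_{M(G)}$ to iso-computational list conjugacy in $G$. Since BMR handles all pairs with at least one endpoint outside $\langle X,q\rangle_{M(G)}$, and since membership in $\langle X,q\rangle_{M(G)}$ can be read off from the normal form of an element of $M(G)$, it suffices to show that conjugacy among elements of $\langle X,q\rangle_{M(G)}$ is Turing-equivalent to iso-computational list conjugacy in $G$.

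For the forward direction ($\Leftarrow$), I first place each element of $\langle X,q\rangle_{M(G)}$ into a cyclically reduced form $u_1 q^{\sigma_1}\cdots u_k q^{\sigma_k}$, where the cyclic reduction forces $\sigma_i=-\sigma_{i+1}\Rightarrow u_i\neq 1$ in $F(X)$, matching the standing hypothesis in the definition of iso-computational list conjugacy. The cyclic sign pattern $(\sigma_1,\ldots,\sigma_k)$ up to rotation is a conjugacy invariant, as it records the exponent-sum sequence of $q$. For two elements with matching patterns I appeal to the forthcoming Theorem~\ref{conjwithoutthetas}, which asserts that two such normal forms are conjugate in $M(G)$ iff some cyclic rotation of their $u$-tuples satisfies $\overset{\sigma}{\sim}$. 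Since there are only $k$ rotations to try, this yields a decision algorithm. Conversely ($\Rightarrow$), given tuples $\mathbf{u},\mathbf{v}\in F(X)^k$ and $\sigma\in\{\pm1\}^k$ meeting the non-triviality hypothesis, I form $x=u_1 q^{\sigma_1}\cdots u_k q^{\sigma_k}$ and $y=v_1 q^{\sigma_1}\cdots v_k q^{\sigma_k}$ and query the $M(G)$ conjugacy oracle; by the same theorem, after a finite check over cyclic rotations of $\mathbf{v}$, the oracle answer determines whether $\mathbf{u}\overset{\sigma}{\sim}\mathbf{v}$.

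The main obstacle is proving Theorem~\ref{conjwithoutthetas}: that conjugacy between normal-form elements of $\langle X,q\rangle_{M(G)}$ in $M(G)$ coincides exactly with iso-computational list conjugacy on the associated tuples. The difficulty is that a priori a conjugator can involve both $q$ and letters from $\Theta$, and must be unpacked normal-form-by-normal-form. My strategy would be to first reduce to the $\approx$-case, writing a $q$-free conjugator as $w\cdot\prod\theta_{\alpha_j}^{\varepsilon_j}$ with $w\in F(X)$. The $\theta$'s commute past $X$-letters for free, but each crossing of a $q^{\sigma_i}$ releases a factor built from $\{\alpha_j^{\sigma_i\varepsilon_j}\}$ via the defining relations $\theta_\alpha q = q\alpha \theta_\alpha$ (for $\alpha\in R$) and its variant for $\alpha\in X$. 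Collecting these factors on the appropriate side of each $u_i$ yields precisely the word $\varepsilon$ of the definition, and the four bullet-point cases record which side of $u_i$ the $\theta$-product sits on depending on the signs $\sigma_i,\sigma_{i+1}$ bracketing it; the constraint that $\varepsilon$ represents the identity in $G$ appears because the total $\theta$-content on the left and right of the equation $\gamma x \gamma^{-1}=y$ must cancel. Finally, a general conjugator involving $q$ reduces to the $q$-free case by cyclically permuting the target, justifying the cyclic-rotation search built into the algorithm above.
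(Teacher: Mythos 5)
Your reverse direction (ICLC oracle $\Rightarrow$ conjugacy in $M(G)$) follows essentially the paper's route: cyclically reduce, dispose of pairs outside $\langle X,q\rangle_{M(G)}$ via \cite{BMR}, and for $q$-containing normal forms search over cyclic rotations and apply Theorem~\ref{conjwithoutthetas}; the paper additionally needs a bounded brute-force step (Lemma~\ref{notypeC}) to cover conjugacies witnessed only by diagrams without type C $q$-corridors, which your sketch glosses over, but the skeleton is the same and your outline of the proof of Theorem~\ref{conjwithoutthetas} is in the spirit of Lemma~\ref{conjbythetas} and its corollaries.

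The genuine gap is in your forward direction. You propose to decide $\mathbf{u}\overset{\sigma}{\sim}\mathbf{v}$ by forming $x=q^{\sigma_1}u_1\cdots q^{\sigma_k}u_k$ and $y=q^{\sigma_1}v_1\cdots q^{\sigma_k}v_k$ and querying the $M(G)$-conjugacy oracle ``after a finite check over cyclic rotations of $\mathbf{v}$.'' This cannot work: every cyclic rotation of $y$ is conjugate to $y$ in $M(G)$ (conjugate by a prefix), so all of your oracle queries return the same bit, and by Theorem~\ref{conjwithoutthetas} together with Remark~\ref{conjviaCcorridor} that bit only records whether \emph{some} rotation-alignment of the tuples is iso-computationally list conjugate, not whether the given alignment is. The two are genuinely different relations: e.g.\ with $k=2$, $\sigma=(1,1)$, $\mathbf{u}=(a,b)$, $\mathbf{v}=(b,a)$ in a group where no conjugator swaps $a$ and $b$, one has $x=qaqb\sim_{M(G)}qbqa=y$ (they are cyclic rotations of one another) while $\mathbf{u}\overset{\sigma}{\not\sim}\mathbf{v}$. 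So the oracle answer does not determine the relation you must decide, and there is no obvious way to break the rotational symmetry by further oracle calls.

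This is exactly why the paper's proof of the forward implication is long: it first establishes the quantitative results (Theorems B and C, via Lemma~\ref{DandDprimerelationship} and Propositions~\ref{Dprimelower} and~\ref{Dprimeupper}) showing that solvability of the conjugacy problem for $M(G)$ makes the conjugator-length functions $D_{k,\sigma}$, and hence the witness-length functions $\mathbf{C}_{k,\sigma}$, computable; then it decides $\mathbf{u}\overset{\sigma}{\sim}\mathbf{v}$ by a \emph{bounded search} over candidate conjugators $w$ with $||w||\leq \mathbf{C}_{k,\sigma}(n)$, recovering $\varepsilon$ from $w$ when $\sigma$ is non-alternating (Lemma~\ref{solvesnonalternating}). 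The alternating case needs a further, separate argument (Lemma~\ref{solvealternating}), using list conjugacy and cyclic subgroup membership in free groups together with cyclic subgroup membership in $G$, the latter itself reduced to $M(G)$-conjugacy by the special query of Lemma~\ref{conjsolvescyclic}; your proposal contains none of this machinery, and without something playing its role the forward implication remains unproved.
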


 We will prove the reverse implication in Section \ref{mainalgorithm}. The forward implication will be postponed to Section \ref{forwardtheoremA}, after we have shown some useful quantitative results. To state these results, we need further notation.

For any word $w$ on a set $S$, let $||w||$ be the length of $w$ in the free group on $S$, under the usual word metric for $F(X)$. Additionally, $\langle X,q\rangle_{M(G)}$ and $\langle \Theta\rangle_{M(G)}$ are both (disjoint) free subgroups of $M(G)$, so we may define~$||w||$ for $w$ in $\langle X,q\rangle_{M(G)}$ or $\langle \Theta\rangle_{M(G)}$ to be the length of $w$ in $F(X\cup\{q\})$ or $F(\Theta)$ respectively. Note that $||w||$ will be at least the length of $w$ in~$M(G)$~itself, under the word metric given by the presentation described above. We denote this length by $|w|$. In the next two results, we bound conjugator length in $M(G)$ in terms of a close variant $\Lambda$ of the Dehn function of $G$, as well as a set of functions $\textbf{C}_{k,\sigma}(n)$. For fixed $k>0$ and $\sigma=(\sigma_1,...,\sigma_k)$, with $\sigma_i\in \{\pm1\}$, the latter functions measure the maximal value of $||w||$ over all $k$-tuples $\textbf{u},\textbf{v}$ such that $\textbf{u}\overset{\sigma}{\sim}\textbf{v}$ and $\sum_i ||u_i||+||v_i||\leq n$, where $w$ is as in the definition of $\textbf{u}\overset{\sigma}{\sim}\textbf{v}$. For full details on these functions, see Subsections \ref{lamlength} and \ref{ICLCdef} respectively. 

 Our first quantitative result of this part describes conjugator length in the case of Lemma~\ref{Sapirlemma}:
\begin{thm}
Let $D_0(n)$ be the restriction of the conjugator length function of the Miller Machine $M(G)$ to pairs $(q, qu)$ such that $q$ and $qu$ are conjugate in~$M(G)$, $u\in \langle X\rangle_{M(G)}$, and $||u||\leq n-2$. Then $D_0(n)$ is  within a constant factor of  $\Lambda(n-2)$.  More precisely, if $t=~\max(\{||r||:r\in R\}\cup\{2\})$, then for all $n>2$,  $$\frac{\Lambda(n-2)}{3t} \leq  D_0(n) \leq \Lambda(n-2).$$ 
\end{thm}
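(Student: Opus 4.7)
The plan is to set up a direct bijection between conjugators of the pair $(q, qu)$ in $M(G)$ and decompositions of $u$ in $F(X)$ as products of conjugates of relators. Working in the subgroup $\langle X, \Theta\rangle_{M(G)} \cong F(X) \times F(\Theta)$, the defining relations yield $\theta_x q \theta_x^{-1} = x^{-1}qx$, $\theta_r q \theta_r^{-1} = qr$, and $[\theta_\alpha, x] = 1$, so that for $\gamma = \theta y$ with $\theta \in F(\Theta)$ and $y \in F(X)$ one has $\gamma q \gamma^{-1} = y(\theta q \theta^{-1})y^{-1}$. An induction on $||\theta||$ shows that $\theta q \theta^{-1}$ always unfolds to $W^{-1} q U W$, where $W$ is a ``traversal word'' in $F(X)$ built by prepending one $X^{\pm1}$-letter per $\theta_x^{\pm1}$-letter of $\theta$, and $U = \prod_j h_j r_j^{\sigma_j} h_j^{-1}$ is a product of conjugates of relators by nested prefixes $h_j$ of $W$, with the $h_j$ determined by the positions of the $\theta_{r_j}$-letters inside $\theta$. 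Setting $\gamma q \gamma^{-1} = qu$ then forces $y = W$ and $U = u$, giving the claimed bijection between $q$-free conjugators and decompositions of $u$ of this particular nested form.

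For the upper bound $D_0(n) \le \Lambda(n-2)$, I would start with any $u$ such that $u =_G 1$ and $||u|| \le n-2$, pick a decomposition of $u$ witnessing $\Lambda(||u||)$ as defined in Subsection~\ref{lambdadehnrel}, and build the corresponding $\theta \in F(\Theta)$ by interleaving $\theta_x^{\pm1}$'s (one per letter of $W$) with $\theta_{r_j}^{\sigma_j}$'s (one per relator factor, placed at the right stage of the traversal). Taking $\gamma = \theta y$ with $y = W$ then gives $\gamma q \gamma^{-1} = qu$ by the bijection, and $|\gamma|$ is bounded by the cost of the decomposition, which is $\Lambda(||u||) \le \Lambda(n-2)$.

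For the lower bound $D_0(n) \ge \Lambda(n-2)/(2t)$, I would take $u$ with $||u|| \le n-2$ attaining $\Lambda(n-2)$ and let $\gamma$ be a shortest $M(G)$-conjugator from $q$ to $qu$, so $|\gamma| \le D_0(n)$. Apply Theorem~\ref{conjwithoutthetas} (proved later in the paper) to replace $\gamma$ by a $q$-free conjugator of comparable length, write $\gamma = \theta y$, and extract the induced decomposition of $u$. Each relator occurrence contributes at most $t = \max ||r||$ to the cost of this decomposition, while each $\theta$-letter and each letter of $y = W$ each contribute at most $1$ to $|\gamma|$; combining these estimates gives cost $\le 2t|\gamma|$, and hence $\Lambda(n-2) \le 2t \cdot D_0(n)$.

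The main obstacle is the reduction to $q$-free conjugators used in the lower bound. This is precisely the content of Theorem~\ref{conjwithoutthetas}, forward-referenced in the excerpt: a conjugator in $M(G)$ containing $q$ as a factor can always be replaced by one without, of comparable length. Without this reduction, the clean correspondence between conjugators and decompositions of $u$ is obscured by the interaction of the $q$-factors in $\gamma$ with the defining relations, forcing a more delicate direct analysis; with it, the remaining work amounts to careful combinatorial accounting of the contributions of the various letters of $\gamma$ to the cost measured by $\Lambda$.
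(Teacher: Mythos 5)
Your upper bound and the bijection between $q$-free conjugators and nested decompositions of $u$ are essentially the approach the paper takes (via Lemma~\ref{conjbythetas} and Lemma~\ref{wordstoconjbythetas}). The lower bound, however, has a genuine gap. You invoke Theorem~\ref{conjwithoutthetas} ``to replace $\gamma$ by a $q$-free conjugator of comparable length,'' but Theorem~\ref{conjwithoutthetas} contains no length information at all: it is a purely existential characterization of $\approx$ in terms of iso-computational list conjugacy. Nothing in that statement tells you that a shortest conjugator in $M(G)$ can be exchanged for a $q$-free one without blowing up the length. The paper addresses precisely this point with a separate annular-diagram argument (the unnamed lemma preceding Lemma~\ref{DandDprimerelationship}, together with Lemma~\ref{DandDprimerelationship} itself), where one eliminates $q$-corridors of type A and then traces the boundary word $\delta \in K_{\pm 1}$ of the surviving $q$-corridor, showing $c'(x,y') \le t\,c(x,y)$. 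This is where the factor $t$ actually originates, and it requires genuinely geometric input about how $\theta$-corridors of type D cross the $q$-corridor.

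Your cost accounting in the lower bound is also off in a way that masks this. You attribute the factor $t$ to ``each relator occurrence contributes at most $t$ to the cost,'' but in the definition of $\lambda$ (Subsection~\ref{lamlength}) each relator factor contributes exactly $1$ to $f(w)$, via the count $m$, not its length. The correct bookkeeping for a $q$-free conjugator $\gamma = w\tau$ is that the $\theta_x$- and $\theta_r$-letters of $\tau$ together account for $m$ plus the sum of prefix-difference lengths, while the $\langle X\rangle$-factor controls $\|w_m\|$, giving $f \le 2\|\tau\| \le 2|\gamma|$ and hence $D_0'(n) \ge \Lambda(n-2)/2$. Only then does the factor $t$ enter, via $D_0'(n)/t \le D_0(n)$ from Lemma~\ref{DandDprimerelationship}. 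So the $2t$ in the final bound factors as $2 \cdot t$, with the two coming from the $\Lambda$-bookkeeping and the $t$ coming from the $q$-free reduction --- not, as your sketch suggests, from relator lengths. Without supplying the corridor argument, the lower bound does not go through.
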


In the same vein, our next result may be viewed as a partial quantification of Theorem A.
\begin{thm} Fix $k>0$ and $\sigma=(\sigma_1,...,\sigma_k)$, where $\sigma_i\in \{\pm1\}$. Let $D_{k,\sigma}$ be the restriction of the conjugator length function of the Miller Machine $M(G)$ to pairs $x=q^{\sigma_1}u_1q^{\sigma_2}u_2 \cdots q^{\sigma_k}u_k$ and $y=~q^{\sigma_1}v_1q^{\sigma_2}v_2 \cdots q^{\sigma_k}v_k$ such that $x$ and $y$ are conjugate in~$M(G)$, $u_i,v_i\in \langle X\rangle_{M(G)}$ and $\sigma_i\in \{\pm1\}$, for all $i$, and $\sum_i \left( ||u_i||+||v_i||\right)\leq n-2k$. Also, let $t=~\max(\{||r||:~r\in R\}\cup \{2\})$. Then $$D_{k,\sigma}(n)\geq \frac{\textbf{C}_{k,\sigma}(n-2k)}{t}$$ for all $n$ large enough. Also, if there exists some $i$ such that $\sigma_i=\sigma_{i+1}$, then $$D_{k,\sigma}(n)\leq (2M+1)\Lambda(\textbf{C}_{k,\sigma}(n)+n)$$ for a constant $M$ depending on the presentation chosen for $G$. \end{thm}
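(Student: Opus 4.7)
The approach is to exploit the free-by-free structure: the defining relations make $M(G) \cong F(X\cup\{q\}) \rtimes F(\Theta)$ with conjugation actions $\theta_x q\theta_x^{-1} = x^{-1}qx$ and $\theta_r q\theta_r^{-1} = qr$. An induction on $|\tau|$ yields the key identity $\tau q\tau^{-1} = W_\tau^{-1} q \varepsilon_\tau W_\tau$, where $W_\tau \in F(X)$ is built from the $\theta_x$-syllables of $\tau$ and $\varepsilon_\tau$ is a product of conjugates of relators (hence $=_G 1$). Setting $q^{[1]} := q\varepsilon_\tau,\ q^{[-1]} := \varepsilon_\tau^{-1} q^{-1}$, one obtains the expansion $\tau x\tau^{-1} = W^{-1}q^{[\sigma_1]} Wu_1 W^{-1} q^{[\sigma_2]} W u_2 \cdots W^{-1}q^{[\sigma_k]} Wu_k$ with $W = W_\tau$; this is the computational bridge between both directions.

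\textbf{Lower bound.} Given any conjugator $\gamma$ of $x$ and $y$, project under $M(G) \to F(\Theta)$ to obtain $\tau$, and write $\gamma = a\tau$ with $a \in F(X\cup\{q\})$, so that $a(\tau x\tau^{-1})a^{-1} = y$ in $F(X\cup\{q\})$. Reading the content between consecutive $q^{\pm 1}$-symbols of $\tau x\tau^{-1}$ gives, case-by-case on $(\sigma_i,\sigma_{i+1})$, precisely one of the four expressions on the left-hand side of the iso-computational definition once we set $w:=W_\tau$ and $\varepsilon := W_\tau^{-1}\varepsilon_\tau W_\tau$. Since $||W_\tau||$ is bounded by the number of $\theta_x$-letters in any word for $\gamma$, and putting an arbitrary word for $\gamma$ into the form $a\tau$ inflates length by at most a factor of $t$ (each push of a $\theta$-letter past a $q$-letter inserts a relator), we obtain $||w|| \leq t|\gamma|$, and maximising over $(\mathbf{u},\mathbf{v})$ with $\sum_i(||u_i||+||v_i||) \leq n - 2k$ yields $D_{k,\sigma}(n) \geq \textbf{C}_{k,\sigma}(n-2k)/t$.

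\textbf{Upper bound.} Given an iso-witness $(w,\varepsilon)$, write $\varepsilon = \prod_j h_j r_j^{\epsilon_j} h_j^{-1}$ as a minimal van Kampen expression (area at most $\Lambda(||\varepsilon||)$, each $||h_j||\leq M$), and let $\tilde u$ denote the $\Theta$-word obtained from $u \in F(X)$ by substituting $x \mapsto \theta_x$. Setting $t := \tilde w \cdot \prod_j \tilde h_j \theta_{r_j}^{\epsilon_j} \tilde h_j^{-1}$ gives $W_t = w$ and $\varepsilon_t = w\varepsilon w^{-1}$, and the calculation of the preceding paragraph run in reverse shows that $\gamma := a \cdot wt$ conjugates $x$ to $y$ in $M(G)$, for a short $a \in F(X\cup\{q\})$ of length $O(||w||+||\varepsilon||)$ absorbing the cyclic shift between $w(txt^{-1})w^{-1}$ and $y$. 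The hypothesis that some $\sigma_i = \sigma_{i+1}$ enters here: cases 1 or 4 of the iso-definition allow us to solve algebraically for $\varepsilon$, giving $||\varepsilon|| \leq 2||w|| + ||u_i|| + ||v_i|| \leq 2\textbf{C}_{k,\sigma}(n)+n$. Combining $|t| \leq ||w|| + (2M+1)\Lambda(||\varepsilon||)$ with the bound on $|a|$ (and monotonicity of $\Lambda$) yields the claimed $(2M+1)\Lambda(\textbf{C}_{k,\sigma}(n)+n)$.

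\textbf{Main obstacle.} The chief delicate points are the $(\sigma_i,\sigma_{i+1})$ case analysis (where the ``not both trivial'' convention is precisely what prevents adjacent $q^{\pm 1}$-letters from cancelling and destroying the four-case correspondence), and the length accounting for the normalisation into $a\tau$-form in the lower bound. Without the $\sigma_i = \sigma_{i+1}$ hypothesis the iso-conditions only constrain $\varepsilon$ up to conjugation by $u_i$, so $||\varepsilon||$ cannot be bounded in terms of $||w|| + n$, which is exactly why the upper bound is stated only under that hypothesis.
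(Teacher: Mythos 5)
Your algebraic core is essentially the paper's: your identity $\tau q\tau^{-1}=W_\tau^{-1}q\,\varepsilon_\tau W_\tau$ is Lemma \ref{conjbythetas}, the block expansion of $\tau x\tau^{-1}$ and the extraction of $(w,\varepsilon)$ is the forward direction of Theorem \ref{conjwithoutthetas} (used in Proposition \ref{Dprimelower}), the conjugator $\tilde w\cdot\prod_j\tilde h_j\theta_{r_j}^{\epsilon_j}\tilde h_j^{-1}$ is the $w\tau_w\tau_\varepsilon$ of Lemma \ref{wordstoconjbythetas}, and solving for $\varepsilon$ from a repeated sign $\sigma_i=\sigma_{i+1}$ to get $||\varepsilon||\leq 2||w||+n$ is exactly Proposition \ref{Dprimeupper}. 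The genuine gap is in how you pass from an arbitrary conjugator $\gamma\in M(G)$ to this block-by-block picture. Writing $\gamma=a\tau$ with $a\in F(X\cup\{q\})$ is fine, but when $a$ contains $q$-letters the free-group equation $a(\tau x\tau^{-1})a^{-1}=y$ does \emph{not} let you ``read the content between consecutive $q^{\pm1}$-symbols'' and land on the four iso-conditions for $(\mathbf{u},\mathbf{v})$: conjugation by such an $a$ rotates the cyclic word, so at best you extract a witness for a cyclically shifted relation between $(u_{1+j},\dots,u_{k+j})$ and $(v_1,\dots,v_k)$ (the rotation point can even fall inside a block). For the lower bound this is fatal as written: a short witness for a shifted pair does not bound $c_{k,\sigma}(\mathbf{u},\mathbf{v})$ for the extremal pair, so you cannot conclude $|\gamma|\geq \textbf{C}_{k,\sigma}(n-2k)/t$. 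Symmetrically, for the upper bound $D_{k,\sigma}$ ranges over pairs that are conjugate in $M(G)$ but need not satisfy the unrotated relation $\mathbf{u}\overset{\sigma}{\sim}\mathbf{v}$ at all, so there may be no witness $(w,\varepsilon)$ to feed your construction; ``a short $a$ absorbing the cyclic shift'' is precisely the claim that requires proof. The paper supplies this reduction geometrically: the first lemma of Section \ref{WordprobG} and Lemma \ref{DandDprimerelationship} (with Remark \ref{conjviaCcorridor}) use annular diagrams and $q$-corridors to replace an arbitrary conjugator by a $q$-free conjugator to an explicit cyclic conjugate of $y$, and this is also where the factor $t$ genuinely arises (the corridor word lies in $K_{\pm1}$, so its length is at most $t$ times its number of $\theta$-edges, each of which forces a crossing of $\gamma$); your proposal has no substitute for this step, and the $t$ you introduce by ``pushing $\theta$-letters past $q$'' is not needed where you put it and does not repair the rotation issue.

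Two smaller points. First, in your upper bound the assertion that a minimal van Kampen expression has ``each $||h_j||\leq M$'' for a presentation constant $M$ is false; controlling the total cost of the conjugating words is exactly what $\lambda$ and $\Lambda$ are for, and the paper instead gets $||\tau_\varepsilon||=\lambda(\varepsilon)\leq\Lambda(||\varepsilon||)$ directly from Lemma \ref{wordstoconjbythetas}, with the constant $M$ entering only through the elementary bound $\Lambda(n)\geq n/M$ used to absorb the additive $2\textbf{C}_{k,\sigma}$ term in Proposition \ref{Dprimeupper}. Second, in the lower bound $||W_\tau||\leq||\tau||\leq|\gamma|$ already holds because $\tau$ is the image of $\gamma$ under the retraction onto $F(\Theta)$, so no factor $t$ is needed there. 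These are fixable; the missing corridor/cyclic-shift reduction is the substantive gap.
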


Note that in the special case of $k=1$ and $\sigma=(1)$, this result quantifies Lemma \ref{millerlemma} in the case where $u_1$ and $v_1$ are trivial. In addition, the $\Lambda$ function can be used to give coarser bounds in terms of just $\textbf{C}_{k,\sigma}$ and the Dehn function of $G$ (see Proposition \ref{lambdadehnrel}). It is possible to extend these bounds to the conjugator length function defined on all of $M(G)$, however this is in many cases uncomputable, so we omit it here.

\paragraph{Acknowledgements}The author would like to thank A. Beaupre, R. Kleinberg, J. Manning, and T. Riley for their very helpful discussions during this work. The author is also grateful for the support of the National Science Foundation, under NSF Grant DGE - 2139899 and a question posed to him by A. Miasnikov.

\section{The Algebraic Structure of $M(G)$}\label{structureMG} 

\subsection{Notation}
For the entirety of this paper, let $G$ be a finitely presented group, with a fixed finite presentation $G=\langle X|R\rangle.$ For technical reasons, we assume without loss of generality that $R\neq \emptyset$ and $X\cap R=\emptyset.$ If two words $w_1$ and $w_2$ are equal in any group $H$ (typically $H=G$ or $M(G)$), we will write $w_1=_H w_2$. When $w_1$ and $w_2$ are freely equal, we will write $w_1\equiv w_2$, and when the context is clear we will simply write~$w_1=w_2$. When we are considering functions, such as $\varphi:g_1\mapsto g_2,$ we will just write $\varphi(g_1)=g_2$, with the equality understood as being in the codomain of the function.

Additionally, recall the definition of Miller Machines:
\begin{definition} Fix a fresh letter $\theta_\alpha$ for each $\alpha\in X\cup R,$ along with a fresh letter $q,$ and let $\Theta=\{\theta_\alpha|
\alpha\in X\cup R\}$. The Miller Machine $M(G)$ of $G$ is the group generated by $X\cup\Theta\cup\{q\},$ subject to the relations $$\begin{matrix}\theta_\alpha x&=&x\theta_\alpha&& x\in X,&\alpha\in X\cup R\\ \theta_xxq&=&qx\theta_x&&x\in X\ &\\ \theta_rq&=&qr\theta_r&&r\in R.\end{matrix}$$
Besides this presentation, \cite{Miller71} gives several other useful ways to view $M(G)$, which we discuss here. The arguments we give are due \cite{Miller71}, however we make explicit some details that will be useful later.
\end{definition}
\subsection{$M(G)$ as HNN-extension with stable letters $\theta_\alpha$}\label{stableletterthetaHNN} Let $\ \alpha\in X\cup R,$ and define $\phi_\alpha:X\cup\{q\}\to \langle X,q\rangle_{M(G)}$ by

\[\phi_\alpha(\chi)= \begin{cases} 
      \chi & \chi\in X \\
      \alpha^{-1}q\alpha& \alpha \in X\text{ and }\chi=q \\
      q\alpha& \alpha \in R\text{ and }\chi=q.
   \end{cases}
\]

By inspection of the relations of $M(G)$, $\langle X,q\rangle_{M(G)}$ is a free group, and we have defined $\phi_\alpha$ on all of the generators of $\langle X,q\rangle_{M(G)}$. Thus, $\phi_\alpha$ can be extended to a group homomorphism $\phi_\alpha:\langle X,q\rangle_{M(G)}\to \langle X,q\rangle_{M(G)}$ for all $\alpha\in X\cup R$. A well-known theorem of Nielsen \cite{Nielsen1924} implies that each $\phi_\alpha$ is an automorphism of $\langle X,q\rangle_{M(G)}$, so we can take the HNN extension of $\langle X,q\rangle_{M(G)}$ with along one of the $\phi_\alpha$'s, taking~$\theta_\alpha$~as the stable letter. This group will still have $\langle X,q\rangle_{M(G)}$ as a subgroup, so we can take another HNN extension along $\phi_{\alpha'}$ for any $\alpha'\neq~\alpha$. Doing this for every $\alpha'\neq \alpha$, in arbitrary order, will give a group $M'(G)$ generated by $X\cup \Theta\cup \{q\} $ and subject to relations $$\begin{matrix}\theta_\alpha x\theta_\alpha^{-1}&=&x&& x\in X,&\alpha\in X\cup R\\ \theta_xq\theta_x^{-1}&=&x^{-1}qx&&x\in X\ \\\theta_rq\theta_r^{-1}&=&qr&&r\in R.\end{matrix}$$ In particular, we see that $\theta_\alpha x=x\theta_\alpha$ holds in both groups, for all~$\alpha~\in ~X~\cup ~R$~. Using this relation we see that the second relations of both groups are equivalent: $$\theta_xq\theta_x^{-1}=x^{-1}qx\iff \theta_xq=x^{-1}qx\theta_x\iff x\theta_xq=qx\theta_x\iff \theta_xxq=qx\theta_x.$$ The third relations of both groups are equivalent as well: $$\theta_rq=qr\theta_r\iff \theta_r q \theta_r^{-1}=qr.$$ Since $M(G)$ and $M'(G)$ have the same generating sets, and all of $M(G)$'s relations are derivable in $M'(G)$ and vice versa, they are isomorphic groups.

\begin{remark} Note that $\theta_\alpha\mapsto \phi_\alpha$ gives a map $\langle \Theta\rangle_{M(G)}\to \Aut(\langle X,q\rangle_{M(G)}).$ This map is easily verified to be a homomorphism, so it gives a semidirect product structure $\langle X,q\rangle_{M(G)}\rtimes \langle \Theta\rangle_{M(G)}$ for $M(G)$. As a consequence, every element $x\in M(G)$ can be written uniquely as $\alpha\tau$ for some $\alpha\in \langle X,q\rangle_{M(G)}$ and $\tau\in~ \langle \Theta\rangle_{M(G)}.$
\end{remark}
\subsection{$M(G)$ as HNN-extension with stable letter $q$}\label{stabletterqHNN} Let $H=\langle X,\Theta\rangle_{M(G)}$, and define two subgroups of $H$: the subgroup $K_{-1}$ generated by $\{\theta_xx|x\in X\}\cup\{\theta_r|r\in R\}$, and the subgroup $K_1$ generated by $\{\theta_xx|x\in X\}\cup\{\theta_rr|r\in R\}$. Note that $H$ is isomorphic to $\langle X\rangle_{M(G)}\times \langle \Theta\rangle_{M(G)}$, since the only relations of $M(G)$ not containing $q$ give commutation between generators of $\langle X\rangle_{M(G)}$ and $\langle \Theta\rangle_{M(G)}$, and since $q$ does not appear in any element of $H$. 

We claim both $K_{-1}$ and $K_1$ are free. For the case of $K_{-1},$ consider the homomorphism $\varphi:H\to F(\Theta)$ defined by $\varphi(x)=1,$ $\varphi(\theta_\alpha)=\theta_\alpha$ for all $x\in~X$ and $\alpha\in X\cup R$. We now show the restriction of this map to $K_{-1}$ is an isomorphism. It is surjective because $\varphi(\theta_xx)=\theta_x$ and~$\varphi(\theta_r)=\theta_r.$ 

It is a much longer argument to  show that $\varphi|_{K_{-1}}$ is injective. Let $g \in \Ker(\varphi|_{K_{-1}})$. Applying the commutation relations $\theta_\alpha x=x\theta_\alpha$, . By the direct product structure of $H$, we can write this element as $g=_{M(G)}w\tau$ for some unique $w\in \langle X\rangle_{M(G)},\tau\in \langle \Theta\rangle_{M(G)}$. Since $K_{-1}$'s generators are of the form $\theta_xx$ or $\theta_r$, we can write $g$ as a product of these terms and their inverses. Before cancellation, every $x^{\pm1}$ has a ``corresponding" $\theta_x^{\pm1},$ and vice versa. Without loss of generality, suppose we write $g$ in this form with no letters cancelled. Note that, when we use commutation relations to write $g=_{M(G)}w\tau,$ the words $w$ and $\tau$ may not necessarily be reduced.

\paragraph{Claim:} Let $h\in K_{-1}.$ A letter $x^{\pm1}$ is to the left of another letter $y^{\pm1}$ before applying a commutation relation if and only if it is to the left afterwards, with the same holding for letters $\theta_x^{\pm1}$ and $\theta_y^{\pm1}.$ That is, applying commutation relations to $h$ preserves the relative order of the $X$-letters and the relative order of the $\theta$-letters. 
\begin{proof} If the commutation relation applied does not include $x^{\pm1}$ or $y^{\pm1},$ the claim obviously holds. Otherwise, suppose $x^{\pm1}$ is to the left of $y^{\pm1}$ and the relation involves $x^{\pm1}.$ Then, we can write either $h=w_1x^{\pm1}\theta_x^{\pm1}w_2y^{\pm1}w_3$ or $h=~w_1\theta_x^{\pm1}x^{\pm1}w_2y^{\pm1}w_3$ for some $w_1,w_2,w_3\in H$. Applying the commutation relation gives the words $w_1\theta_x^{\pm1}x^{\pm1}w_2y^{\pm1}w_3$ and  $w_1x^{\pm1}\theta_x^{\pm1}w_2y^{\pm1}w_3$ respectively, which both have $x^{\pm1}$ to the left of $y^{\pm1}.$ The converse, as well as the same claim for $\theta$-letters, both follow~similarly. \end{proof}

An immediate consequence is that, even after applying an arbitrary sequence of commutations to $g$ (without cancelling any letters) a letter $x^{\pm1}$ is to the left of a letter $y^{\pm1}$ if and only if the corresponding letter $\theta_x^{\pm1}$ is to the left of the corresponding~$\theta_y^{\pm1}.$

Returning to the proof that $K_{-1}$ is free, we have $\tau=\varphi(w\tau)=\varphi(g)=1.$  Since $\tau$ is a product of generators of the free group $\langle \Theta\rangle_{M(G)},$ this can only happen if all $\theta$-letters in $\tau$ cancel, or if $\tau$ was the trivial word to begin with. If the latter is the case, then $w$ is trivial as well, since any $X$-letter in $w$ would have a corresponding $\theta$-letter in $\tau.$ On the other hand, if $\tau$ has some $\theta$-letters, then all the $\theta$-letters in $\tau$ must cancel freely to get the trivial word. In particular, if $w$ has any $X$-letters, then their corresponding $\theta_x$'s must cancel. However, if we cancel a pair $\theta_x\theta_x^{-1},$ in $w$ there must be a pair $xx^{-1}$ which we can also cancel, hence we can freely cancel all letters in $w.$ In both cases, we get $w\equiv1,$ so $g=_{M(G)} 1$, hence $\varphi$ is injective.

The same argument (after observing that every $\theta_r$ has a corresponding $r$) shows that $K_1$ is also free, with the map $\varphi':H\to\langle \Theta\rangle_{M(G)}$ defined by $\theta_x~x~\mapsto~ \theta_x$ and $\theta_r~r~\mapsto~\theta_r$ giving the isomorphism, when restricted to $K_1$.

With this fact in hand, we define the map $$\kappa:\{\theta_xx,\theta_rr:x\in X,r\in R\}\to\{\theta_xx,\theta_r:x\in X,r\in R\}$$ by $\kappa(\theta_xx)= \theta_xx$ and $\kappa(\theta_rr)= \theta_r$. Since both groups are free, and $\kappa$ gives a bijection between their free generating sets, $\kappa$ can be extended to an isomorphism~$K_{1}\to~K_{-1}.$ Defining $M''(G)$ to be the HNN-extension of $H$ along $\kappa$, with~$q$~as the stable letter, we see that the $M''(G)$ is generated by $X\cup \Theta\cup \{q\}$ and subject to the relations $$\begin{matrix}\theta_\alpha x&=&x\theta_\alpha&& x\in X,&\alpha\in X\cup R\\ q\theta_xxq^{-1}&=&\theta_xx&&x\in X\ &\\q\theta_rrq^{-1}&=&\theta_r&&r\in R.\end{matrix}$$ The first relation follows from the direct product structure of $H$, and the other two from the definition of an HNN-extension. By construction, these determine all the relations of $M''(G).$ 

Since the first relation holds in both $M(G)$ and $M''(G)$, we see that the second relations of both groups are equivalent, since $$qx\theta_xq^{-1}=\theta_xx\iff q\theta_xx=\theta_xxq\iff \theta_xxq= q\theta_xx=qx\theta_x.$$ So are the third relations, since $$q\theta_rrq^{-1}=\theta_r\iff q\theta_rr=\theta_rq\iff qr\theta_r=\theta_rq\iff \theta_rq=qr\theta_r.$$ Thus, $M(G)$ is isomorphic $ M''(G)$.

To conclude this section, we record the following corollary of the above discussion.

\begin{corollary}\label{KintersectXq}
    Both $K_{-1}\cap\langle X,q\rangle_{M(G)}$ and $K_{1}\cap\langle X,q\rangle_{M(G)}$ are trivial.
\end{corollary}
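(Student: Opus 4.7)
The plan is to combine two pieces already established in the preceding subsections: the semidirect product decomposition $M(G) = \langle X,q\rangle_{M(G)} \rtimes \langle \Theta\rangle_{M(G)}$ from Subsection \ref{stableletterthetaHNN}, and the injectivity of the homomorphism $\varphi\colon H\to F(\Theta)$ (and its analogue $\varphi'$) on $K_{-1}$ (respectively $K_1$) just proved above.

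First I would take any $g \in K_{-1} \cap \langle X,q\rangle_{M(G)}$. Since $K_{-1} \subseteq H = \langle X,\Theta\rangle_{M(G)}$, the direct product structure $H \cong \langle X\rangle_{M(G)} \times \langle \Theta\rangle_{M(G)}$ lets me write $g = w\tau$ uniquely with $w \in \langle X\rangle_{M(G)}$ and $\tau \in \langle \Theta\rangle_{M(G)}$. On the other hand, $g \in \langle X,q\rangle_{M(G)}$ means $g = g \cdot 1$ is also a valid decomposition of $g$ in the semidirect product $\langle X,q\rangle_{M(G)} \rtimes \langle \Theta\rangle_{M(G)}$. By uniqueness of the semidirect product decomposition, $\tau = 1$, so $g = w \in \langle X\rangle_{M(G)}$.

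Now I would apply the homomorphism $\varphi$ defined in Subsection \ref{stabletterqHNN}: since $\varphi$ sends every $x \in X$ to $1$, we have $\varphi(g) = \varphi(w) = 1$. But $\varphi$ restricts to an isomorphism on $K_{-1}$, so $g = 1$. This shows $K_{-1} \cap \langle X,q\rangle_{M(G)}$ is trivial. The argument for $K_1$ is identical, using $\varphi'$ in place of $\varphi$.

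There is no real obstacle here; the claim is essentially a bookkeeping consequence of the two different group-theoretic decompositions of $M(G)$ developed in this section. The only subtlety worth flagging is that the conclusion $g \in \langle X\rangle_{M(G)}$ in the first step depends on using the unique normal form from the semidirect product structure, rather than naively arguing that $\langle X,q\rangle_{M(G)} \cap \langle X,\Theta\rangle_{M(G)} = \langle X\rangle_{M(G)}$ from the presentation alone.
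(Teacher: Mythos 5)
Your proof is correct, but it routes through a different structural fact than the paper does. The paper's proof extends the homomorphism $\varphi\colon H\to\langle\Theta\rangle_{M(G)}$ to a homomorphism $\psi\colon M(G)\to\langle\Theta\rangle_{M(G)}$ by additionally declaring $\psi(q)=1$ (one checks this respects the defining relations of $M(G)$), then observes that $\psi$ restricts to the injective $\varphi$ on $K_{-1}$ while $\psi(\langle X,q\rangle_{M(G)})=\{1\}$; the intersection is therefore killed by an injection and must be trivial. You instead stay with $\varphi$ on $H$, but first use the semidirect product decomposition $M(G)=\langle X,q\rangle_{M(G)}\rtimes\langle\Theta\rangle_{M(G)}$ together with the direct product decomposition $H=\langle X\rangle_{M(G)}\times\langle\Theta\rangle_{M(G)}$ to force any $g$ in the intersection to lie in $\langle X\rangle_{M(G)}$, and only then apply injectivity of $\varphi$ on $K_{-1}$. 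Both arguments hinge on the same key lemma (injectivity of $\varphi$ on $K_{\pm1}$), but the paper buys efficiency by extending the map to all of $M(G)$ in one line, whereas your version avoids defining a new homomorphism at the cost of invoking the uniqueness of normal forms in the semidirect product. The subtlety you flag --- that $\langle X,q\rangle_{M(G)}\cap H=\langle X\rangle_{M(G)}$ should be argued via normal forms rather than read off naively from generators --- is well taken and is precisely what makes your first step rigorous.
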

\begin{proof}
    We prove only the first case, since the second holds by a similar argument. Define the homomorphism $\psi:M(G)\to\langle \Theta\rangle_{M(G)}$ by $\psi(x)=\varphi(q)=1$ for $x\in X,$ and $\psi(\theta_\alpha)=\theta_\alpha$ for $\theta_\alpha\in\Theta.$ This is an extension of the homomorphism $\varphi:H\to~\langle \Theta\rangle_{M(G)}$ constructed above, so it is injective on $K_{-1}.$ We have $\psi(\langle X,q\rangle)=\{1\},$ so $\psi(K_{-1}\cap\langle X,q\rangle_{M(G)})=\{1\}.$ By injectivity on $K_{-1},$ this implies $K_{-1}\cap~\langle X,q\rangle_{M(G)}=\{1\}$ as desired.
\end{proof}

\section{Additional preliminaries}\label{Preliminaries}
Recall that $\equiv$ denotes free equality between words.

\subsection{Dehn function}
Let $F(X)$ be the free group on $X$, and let $g$ be any word on $X$. Recall that $G=F(X)/\langle \langle R\rangle\rangle,$ where $\langle \langle R\rangle\rangle$ is the smallest normal subgroup containing $R$ in $F(X).$ It is a fact of group theory that $g=_G1$ if and only if $g\in \langle \langle R\rangle\rangle$ if and only if $g\equiv \prod_{i=1}^mw_ir_iw_i^{-1}$ for some $w_i\in F(x),r_i\in R^{\pm1}.$ We define $\delta(g)$ to be the smallest $m$ such that $g$ is expressible in this form, and the Dehn Function $\Delta$ to be $$\Delta(n)=\max\limits_{\begin{matrix}||g||\leq n\\g=_G1\end{matrix}}\delta(g).$$

  This notion has an equivalent definition as the maximum area of a minimal-area Van Kampen diagram with perimeter length $n.$ For an explanation of this geometric definition see \cite{bridson_salamon_bridson_2002}.

\subsection{$\Lambda$-length}\label{lamlength}For any word of the form $w=\prod_{j=1}^mw_jr_jw_j^{-1},$ with $r_j\in R^{\pm1}$ and $w_j\in F(x)$, let $$f(w)=m+||w_1||+||w_m||+\sum_{i=1}^{m-1}||w_i^{-1}w_{i+1}||.$$ For $g\in \langle \langle R\rangle\rangle,$ we define $\lambda(g)$ to be the smallest $f(w)$ such that $w\equiv g$. Essentially, this computes the length of $g$, allowing for free cancellation between adjacent $w_i$'s, but not between $w_i$'s and $r_i$'s. Below, we will see that this is precisely the number of steps needed to ``write" $g$ using the Miller Machine $M(G).$ We define $\Lambda(n)$ to be the largest $\lambda(g)$ such that $g=_G1 $ and $||g||\leq n$. 

This function has quadratic bounds in terms of $\Delta$, which we now prove. The second part of the proof relies on simple arguments using Van Kampen diagrams, but since we will not use these elsewhere, the geometric parts of the argument are only sketched.

\begin{proposition}\label{lambdadehnrel}
    Let $t=\max(\{||r||:r\in\langle\langle R\rangle\rangle \}\cup\{2\})$. For all $n$, we have the following bounds: $$\Delta(n)\leq\Lambda(n)\leq 3t(\Delta(n)+n)^2.$$
\end{proposition}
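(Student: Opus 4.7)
The first inequality is essentially syntactic. For any $g \in \langle\langle R\rangle\rangle$ and any expression $w = \prod_{j=1}^m w_jr_jw_j^{-1}$ that freely equals $g$, the formula $f(w) = m + \|w_1\| + \|w_m\| + \sum_i \|w_i^{-1}w_{i+1}\|$ is bounded below by $m$, so $\lambda(g) \geq \delta(g)$. Taking the maximum over $g$ with $\|g\| \leq n$ and $g =_G 1$ yields $\Lambda(n) \geq \Delta(n)$ with no further work.

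For the upper bound, I would use the geometric dictionary between expressions $g \equiv \prod w_jr_jw_j^{-1}$ and Van Kampen diagrams. Fix $g$ with $\|g\| \leq n$, $g =_G 1$; if $g \equiv 1$ freely there is nothing to show, so assume $g$ is a nontrivial reduced word. Let $m = \delta(g)$ and choose a Van Kampen diagram $D$ for $g$ with exactly $m$ two-cells; by repeatedly removing pendant (``tail'') edges---an operation that preserves the cell count and the boundary word---we may assume $D$ has no tails, so its boundary walk literally spells out the reduced word $g$ of length $n$.

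The next step is an edge count. Each interior edge of $D$ lies on exactly two 2-cells and each boundary edge on exactly one, so $2E_{\mathrm{int}} + E_{\mathrm{bd}} = \sum_{j=1}^m |\partial c_j| \leq tm$; since $E_{\mathrm{bd}} = n$ and $E_{\mathrm{int}} \geq 0$, this both forces $n \leq tm$ and yields $E = E_{\mathrm{int}} + E_{\mathrm{bd}} \leq (tm+n)/2 \leq tm$. Now pick a spanning tree $T$ of the $1$-skeleton rooted at the base vertex, and for each cell $c_j$ let $v_j$ be the vertex at which its relator label $r_j \in R^{\pm 1}$ is read and $w_j$ be the word along the $T$-path from the basepoint to $v_j$. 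Standard Van Kampen lemma bookkeeping gives $g \equiv \prod_{j=1}^m w_jr_jw_j^{-1}$ in $F(X)$ for an appropriate ordering of cells, and by construction $\|w_j\| \leq |T| \leq E \leq tm$.

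Finally, the triangle inequality gives $\|w_i^{-1}w_{i+1}\| \leq \|w_i\| + \|w_{i+1}\|$, so
$$f(w) \;\leq\; m + 2\sum_{j=1}^m \|w_j\| \;\leq\; m + 2m(tm) \;=\; m + 2tm^2 \;\leq\; 3tm^2$$
using $t \geq 2$ and $m \geq 1$. Substituting $m = \delta(g) \leq \Delta(n)$ and maximizing over $g$ gives $\Lambda(n) \leq 3t\Delta(n)^2$. The only real obstacle is the appeal to the spanning-tree form of Van Kampen's lemma producing a factorization with the $w_j$ realized as tree paths; this is standard combinatorial group theory, but it is what lets the naive bound $\|w_j\| \leq E$ propagate into the estimate on $f(w)$, and it is why the paper flags the geometric step as only being sketched.
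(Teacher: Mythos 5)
Your proof takes essentially the same approach as the paper: the lower bound is the definitional observation $\lambda(g)\geq\delta(g)$, and the upper bound reads an expression $\prod w_jr_jw_j^{-1}$ off a minimal-area Van Kampen diagram, bounds each $\|w_j\|$ by the total number of edges ($\leq t\delta(g)$), and applies the triangle inequality to control the $\|w_i^{-1}w_{i+1}\|$ terms. The spanning-tree and edge-incidence bookkeeping you supply simply fills in details the paper deliberately leaves as a sketch and does not alter the argument.
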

\begin{proof}
    
Firstly, observe that $\lambda(g)\geq \delta(g)$ for all $g\in\langle\langle R\rangle\rangle$ by definition, so $\Lambda(n)\geq \Delta(n)$. 

For the second inequality, let $g\in \langle\langle R\rangle\rangle$ and consider the word $w'$ representing $g$ with the smallest number of relators. The number of relators must be $\delta(g),$ and there exists a minimal-area Van Kampen diagram for $g$ corresponding to $w'$. Each $w_i$ represents a non self-intersecting path from the basepoint of this Van Kampen diagram to some point on the cell corresponding to $r_i.$ The number of cells of such a diagram is $\delta(g)$ and each cell has at most $t$ edges on its boundary. Moreover, every edge in the diagram is either on the boundary of a cell, or on the boundary of the diagram itself (or both). There are $||g||$ of the latter edges, since the word along the boundary freely equals $g$, anD at most $t\delta(g)$ of the former, so the diagram has at most $t\delta(g)+|g|$ edges, implying that $||w_i||$ is at most $t \delta(g)+||g||$. We compute $$f(w')= \delta(g)+||w_1||+||w_{\delta(g)}||+\sum_{i=1}^{\delta(g)-1}||w_{i}^{-1}w_{i+1}||$$ $$\leq \delta(g)+||w_1||+||w_{\delta(g)}||+\sum_{i=1}^{\delta(g)-1}||w_{i}|| + ||w_{i+1}||$$ $$\leq \delta(g)+t\delta(g)+||g||+t\delta(g)+||g||+\sum_{i=1}^{\delta(g)-1}2(t\delta(g)+||g||)$$ $$=\delta(g)+2\delta(g)(t\delta(g)+||g||).$$ Since $t$, $\delta(g)$, and $||g||$ are at least $1$, this in turn is bounded above by $3\delta(g)(t\delta(g)+||g||)\leq 3t(\delta(g)^2+\delta(g)||g||)\leq 3t(\delta(g)+||g||)^2 $ so passing to the maximum gives $\Lambda(n)\leq 3t(\Delta(n)+n)^2$ as desired.
\end{proof}

  \subsection{Conjugator length}\label{listconjdefs} Let $Z$ be either $G$ or $M(G)$. Given two elements $u,v\in Z,$ we say $u$ and $v$ are conjugate if there exists $\gamma\in Z$ such that $\gamma u\gamma^{-1}=v,$ and we denote this relation by $u\sim_Z v.$ For such $u\sim_Z v,$ we define $c_Z(u,v)$ as the minimal length of a conjugator taking $u$ to $v$: $$c_Z(u,v)=\min\{|\gamma|\mid \gamma u\gamma^{-1}=_Zv\}.$$ For the group $Z$ as a whole, we define the conjugator length function $\Gamma_Z(n)$ by $$\Gamma_Z(n)=\max\{c_Z(u,v)\mid ||u||+||v||\leq n,\ u\sim_Z v\}.$$ In the group $M(G),$ we will be particularly interested in when $u$ and $v$ are conjugate via elements of $\langle X,\Theta\rangle_{M(G)}$. That is, when $\gamma u\gamma^{-1}=v$ for some $\gamma$ not containing $q.$ We denote this relation by $u\approx v$, and define the functions $$c'_{M(G)}(u,v)=\min\{|\gamma|\mid \gamma u\gamma^{-1}=_{M(G)}v,\ \gamma\in \langle X\cup \Theta\rangle_{M(G)}\},$$ 
$$\Gamma'_{M(G)}(n)=\max\{c'_{M(G)}(u,v)\mid ||u||+||v||\leq n,\ u\approx v\}.$$

Next, let $k\geq 1$ and $\sigma=(\sigma_1,...,\sigma_k)$, where $\sigma_i\in \{\pm1\}$.  We define $D_{k,\sigma},D_{k,\sigma}'$ similarly to $\Gamma_{M(G)},\Gamma'_{M(G)}$ respectively, except that the maxima are restricted to conjugate pairs of the form $x=q^{\sigma_1}u_1q^{\sigma_2}...q^{\sigma_k}u_k$, $y=q^{\sigma_1}v_1q^{\sigma_2}...q^{\sigma_k}v_k$ for some $u_1,...,u_k,v_1,...,v_k\in \langle X\rangle_{M(G)}$ such that the above words are reduced and $\sum_i||u_i||+||v_i||\leq n-2k$. Note that the total number of $q$-letter in $x$ and $y$ is $2k$, so if the above words are reduced, then $||x||+||y||\leq n$ if and only if $\sum_i||u_i||+||v_i||\leq n-2k$. Also, we define $D_0,D_0'$ in the same way, with the maxima restricted to conjugate pairs of the form $(qu,q),$ where $u\in\langle X\rangle_{M(G)}$ is such that $||u||\leq n-2$.

% Lastly, for any $\tau\in \langle \Theta\rangle_{M(G)}$ which is not a power of some other element of $\langle \Theta\rangle_{M(G)}$, we define $E_{k,\sigma, \tau}$ and $E'_{k,\sigma,\tau}$ in the same way as well, except here we restrict the maxima to conjugate pairs $x,y$ where  $x=~q^{\sigma_1}u_1q^{\sigma_2}...q^{\sigma_k}u_k\tau^m$ and $y=q^{\sigma_1}v_1q^{\sigma_2}...q^{\sigma_k}v_k\tau^m$ such that $m>0$, $u_1,...,u_k,v_1,...,v_k\in~ \langle X\rangle_{M(G)}$, and $\sum_{i}||u_i||+||v_i||\leq n$. 

\subsection{Iso-computational list conjugacy}\label{ICLCdef} Let $\textbf{u}=~(u_1,...,u_k)$ and $\textbf{v}=~(v_1,...,v_k)$ be $k$-tuples of words on $X$, let $\sigma=~(\sigma_1,...,\sigma_k)$ be a tuple of elements of $\{\pm1\}$, and suppose $\sigma_i=-\sigma_{i+1}$ implies $u_i\not\equiv ~1\not\equiv v_i$ for $i<k$. We write $\textbf{u}\overset{\sigma}{\sim}\textbf{v}$ if there exists words $w,\varepsilon$ on the generators of $G$ such that $\varepsilon$ represents the identity of $G$ and the following are true: \begin{itemize}
    \item If $\sigma_i=\sigma_{i+1}=1$, then $w\varepsilon u_i w^{-1}\equiv v_i$
    \item If $\sigma_i=1,\sigma_{i+1}=-1$, then $w\varepsilon u_i \varepsilon^{-1}w^{-1}\equiv v_i$
    \item If $\sigma_{i}=-1,\sigma_{i+1}=1$, then $wu_iw^{-1}\equiv v_i$
    \item If $\sigma_i=-1,\sigma_{i+1}=-1$, then $wu_i\varepsilon^{-1}w^{-1}\equiv v_i$
\end{itemize} If this is so, we say that $\textbf{u}$ and $\textbf{v}$ are \textit{iso-computationally list-conjugate via $\sigma$}. The intuition behind this definition is that $\varepsilon$ represents computation showing that words are equal in $G$, at least for the components of the tuple where $\sigma_i=\sigma_{i+1}$ - the name is not entirely accurate for the other cases, however we still find it suggestive. 

Now, fix some $\sigma$ and $k$ as above. For any $k$-tuples $\textbf{u},\textbf{v}$ such that $\textbf{u}\overset{\sigma}{\sim}\textbf{v}$, we define $c_{k,\sigma}(\textbf{u},\textbf{v})$ as the minimum value of $||w||$ for all $w,\varepsilon$ satisfying the above condition. Next, we define $$\textbf{C}_{k,\sigma}(n)=\max\{c_{k,\sigma}(\textbf{u},\textbf{v}) \mid \textbf{u}\overset{\sigma}{\sim}\textbf{v},\ \sum_{j}||u_j||+||v_j||\leq n-2k\}.$$ Note that this functions depends on the presentation of $G$ chosen, so (like $M(G)$) $\textbf{C}_{k,\sigma}$ is not well-defined for a group as a whole.

\subsection{Two miscellaneous notions} Given any words $a$ and $b$ on $X$, we know that $b\in \langle a\rangle_{M(G)}$ if and only if $b=_{M(G)}a^k$ for some $k\in \ZZ$. The problem of deciding whether there exists such a $k$, we call the \textit{Cyclic Subgroup Membership problem}.  
Lastly, adopting the convention that $\sigma_{k+1}=\sigma_1$, if a tuple $\sigma=(\sigma_1,...,\sigma_k)$ is such that $\sigma_i\in \{\pm1\}$ and $\sigma_i=-\sigma_{i+1}$ for all $i=~1,...,k$, then we say $\sigma$ is \textit{alternating}.
\section{Diagrams}\label{Diag} One of the most powerful tools we will use in this paper is annular diagrams. A pair of words $u$ and $v$ over a set $S$ represent conjugate elements in a finitely presented group generated by $S$ if and only if there exists a planar annular diagram whose inner boundary is labelled $v$, whose outer boundary is labeled $u$, and whose interior is filled by cells corresponding to the set of relations in the group presentation, as seen in Figure \ref{fig:generic_annular_diagram}. We call $u$ and $v$, respectively, the ``outer" and ``inner" words along the boundary of the diagram, and say that this diagram ``witnesses the conjugacy of $u$ and $v$."
\begin{figure}
    \centering
\includegraphics[scale=.5]{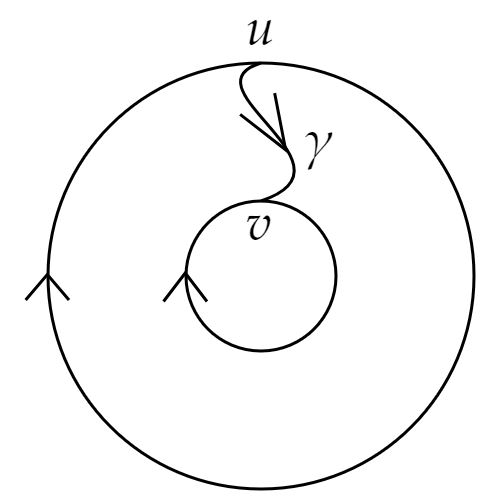}
    \caption{A typical annular diagram witnessing $u\sim v$}
    \label{fig:generic_annular_diagram}
\end{figure}

For general information on annular diagrams, we refer the reader to \cite{lyndon2001combinatorial}. In this section, we will describe some important observations about these diagrams in the case of $M(G)$. 

In particular, for any $x$ and $y\in M(G)$, $x\mgcon y$ if and only if there exists a planar annular diagram whose boundary components are labeled by words representing $x$ and $y$, and whose interior is filled in by cells corresponding to the defining relations, shown in Figure \ref{fig:MG_cells} (recall that there is a different relation for each $x,y\in X,r\in R$, and $\alpha\in X\cup R$).

  \begin{figure}
         \centering

\tikzset{every picture/.style={line width=0.75pt}} %set default line width to 0.75pt        

\begin{tikzpicture}[x=0.75pt,y=0.75pt,yscale=-1,xscale=1]
%uncomment if require: \path (0,300); %set diagram left start at 0, and has height of 300

%Shape: Rectangle [id:dp4173724058690582] 
\draw   (7.22,85.02) -- (79.43,85.02) -- (79.43,172.58) -- (7.22,172.58) -- cycle ;
\draw   (36.11,76.26) -- (50.55,85.02) -- (36.11,93.77) ;
\draw   (86.66,120.04) -- (79.43,137.55) -- (72.21,120.04) ;
\draw   (36.11,163.82) -- (50.55,172.58) -- (36.11,181.33) ;
\draw   (14.44,120.04) -- (7.22,137.55) -- (0,120.04) ;
%Shape: Rectangle [id:dp1670019536551821] 
\draw   (127.82,85.02) -- (272.24,85.02) -- (272.24,172.58) -- (127.82,172.58) -- cycle ;
%Straight Lines [id:da8243195147770561] 
\draw [line width=1.5]    (200.03,76.26) -- (200.03,93.77) ;
%Straight Lines [id:da3767135954035741] 
\draw [line width=1.5]    (200.03,163.82) -- (200.03,181.33) ;
\draw   (156.7,76.26) -- (171.15,85.02) -- (156.7,93.77) ;
\draw   (156.7,163.82) -- (171.15,172.58) -- (156.7,181.33) ;
\draw   (228.92,163.82) -- (243.36,172.58) -- (228.92,181.33) ;
\draw   (228.92,76.26) -- (243.36,85.02) -- (228.92,93.77) ;
\draw   (135.04,120.04) -- (127.82,137.55) -- (120.6,120.04) ;
\draw   (279.47,120.04) -- (272.24,137.55) -- (265.02,120.04) ;
%Shape: Rectangle [id:dp37395336678351] 
\draw   (308.35,85.02) -- (452.78,85.02) -- (452.78,172.58) -- (308.35,172.58) -- cycle ;
%Straight Lines [id:da4883072017135288] 
\draw [line width=1.5]    (380.57,163.82) -- (380.57,181.33) ;
\draw   (373.34,76.26) -- (387.79,85.02) -- (373.34,93.77) ;
\draw   (337.24,163.82) -- (351.68,172.58) -- (337.24,181.33) ;
\draw   (409.45,163.82) -- (423.89,172.58) -- (409.45,181.33) ;
\draw   (315.57,120.04) -- (308.35,137.55) -- (301.13,120.04) ;
\draw   (460,120.04) -- (452.78,137.55) -- (445.56,120.04) ;

% Text Node
\draw (37.24,54.44) node [anchor=north west][inner sep=0.75pt]  [font=\small]  {$y$};
% Text Node
\draw (37.24,182.95) node [anchor=north west][inner sep=0.75pt]  [font=\small]  {$y$};
% Text Node
\draw (20,122.4) node [anchor=north west][inner sep=0.75pt]  [font=\small]  {$\theta _{\alpha }$};
% Text Node
\draw (51,122.4) node [anchor=north west][inner sep=0.75pt]  [font=\small]  {$\theta _{\alpha }$};
% Text Node
\draw (137,122.4) node [anchor=north west][inner sep=0.75pt]  [font=\small]  {$q$};
% Text Node
\draw (256.86,121.19) node [anchor=north west][inner sep=0.75pt]  [font=\small]  {$q$};
% Text Node
\draw (317,122.4) node [anchor=north west][inner sep=0.75pt]  [font=\small]  {$q$};
% Text Node
\draw (437.39,121.19) node [anchor=north west][inner sep=0.75pt]  [font=\small]  {$q$};
% Text Node
\draw (154.73,58.02) node [anchor=north west][inner sep=0.75pt]  [font=\small]  {$\theta _{x}$};
% Text Node
\draw (226.94,182.36) node [anchor=north west][inner sep=0.75pt]  [font=\small]  {$\theta _{x}$};
% Text Node
\draw (226.53,59.9) node [anchor=north west][inner sep=0.75pt]  [font=\small]  {$x$};
% Text Node
\draw (160.09,182.48) node [anchor=north west][inner sep=0.75pt]  [font=\small]  {$x$};
% Text Node
\draw (370.78,58.46) node [anchor=north west][inner sep=0.75pt]  [font=\small]  {$ \begin{array}{l}
\theta _{r}\\
\end{array}$};
% Text Node
\draw (336.29,182.48) node [anchor=north west][inner sep=0.75pt]  [font=\small]  {$r$};
% Text Node
\draw (407.61,182.36) node [anchor=north west][inner sep=0.75pt]  [font=\small]  {$\theta _{r}$};

\end{tikzpicture}
\caption{The three types cells in an annular diagram for $M(G)$}
\label{fig:MG_cells}

\end{figure}
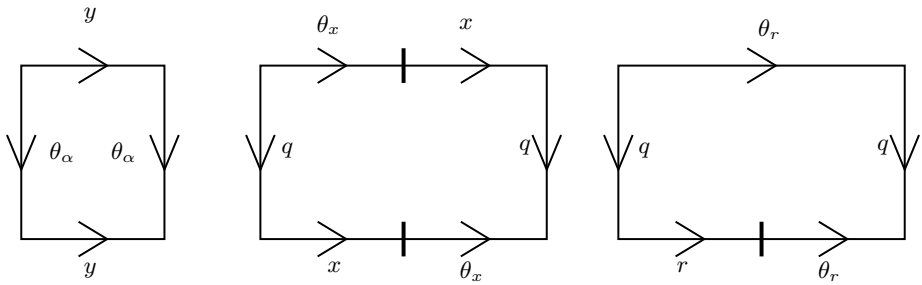

 We have drawn all the relations as rectangles with $q$'s or $\theta_\alpha$'s on either side, facing in the same direction. If a $q$-edge is present anywhere in the diagram, it must be part of a corridor (which we will call a $q$-corridor) made up of the second and third types of cell drawn in Figure \ref{fig:MG_cells}. These corridors must either go from the boundary of the annulus to itself, or form some closed loop in the inside. All the possibilities are shown in Figure \ref{fig: four_types_of_corridors}.

\begin{figure}

\centering
\tikzset{every picture/.style={line width=0.75pt}} %set default line width to 0.75pt        

\begin{tikzpicture}[x=0.3pt,y=0.3pt,yscale=-1,xscale=1]
%uncomment if require: \path (0,432); %set diagram left start at 0, and has height of 432

%Shape: Ellipse [id:dp26239000456093264] 
\draw   (80,183.23) .. controls (80,85.6) and (160.59,6.46) .. (260,6.46) .. controls (359.41,6.46) and (440,85.6) .. (440,183.23) .. controls (440,280.86) and (359.41,360) .. (260,360) .. controls (160.59,360) and (80,280.86) .. (80,183.23) -- cycle ;
%Shape: Ellipse [id:dp8008120834952015] 
\draw   (214.57,183.23) .. controls (214.57,156.74) and (234.91,135.27) .. (260,135.27) .. controls (285.09,135.27) and (305.43,156.74) .. (305.43,183.23) .. controls (305.43,209.72) and (285.09,231.2) .. (260,231.2) .. controls (234.91,231.2) and (214.57,209.72) .. (214.57,183.23) -- cycle ;
%Shape: Arc [id:dp6172625846790523] 
\draw  [draw opacity=0] (237.95,141.3) .. controls (237.79,140.21) and (237.71,139.09) .. (237.71,137.96) .. controls (237.71,124.86) and (248.46,114.25) .. (261.71,114.25) .. controls (274.97,114.25) and (285.71,124.86) .. (285.71,137.96) .. controls (285.71,139.76) and (285.51,141.52) .. (285.13,143.21) -- (261.71,137.96) -- cycle ; \draw   (237.95,141.3) .. controls (237.79,140.21) and (237.71,139.09) .. (237.71,137.96) .. controls (237.71,124.86) and (248.46,114.25) .. (261.71,114.25) .. controls (274.97,114.25) and (285.71,124.86) .. (285.71,137.96) .. controls (285.71,139.76) and (285.51,141.52) .. (285.13,143.21) ;  
%Shape: Arc [id:dp5941933798574377] 
\draw  [draw opacity=0] (243.73,138.73) .. controls (243.72,138.47) and (243.71,138.22) .. (243.71,137.96) .. controls (243.71,128.14) and (251.77,120.18) .. (261.71,120.18) .. controls (271.66,120.18) and (279.71,128.14) .. (279.71,137.96) .. controls (279.71,138.66) and (279.67,139.36) .. (279.59,140.04) -- (261.71,137.96) -- cycle ; \draw   (243.73,138.73) .. controls (243.72,138.47) and (243.71,138.22) .. (243.71,137.96) .. controls (243.71,128.14) and (251.77,120.18) .. (261.71,120.18) .. controls (271.66,120.18) and (279.71,128.14) .. (279.71,137.96) .. controls (279.71,138.66) and (279.67,139.36) .. (279.59,140.04) ;  
%Shape: Arc [id:dp3570569141018658] 
\draw  [draw opacity=0] (285.41,7.74) .. controls (285.41,8.03) and (285.42,8.33) .. (285.41,8.62) .. controls (285.24,21.71) and (274.36,32.2) .. (261.11,32.03) .. controls (247.85,31.86) and (237.25,21.11) .. (237.41,8.02) .. controls (237.42,7.95) and (237.42,7.88) .. (237.42,7.81) -- (261.41,8.32) -- cycle ; \draw   (285.41,7.74) .. controls (285.41,8.03) and (285.42,8.33) .. (285.41,8.62) .. controls (285.24,21.71) and (274.36,32.2) .. (261.11,32.03) .. controls (247.85,31.86) and (237.25,21.11) .. (237.41,8.02) .. controls (237.42,7.95) and (237.42,7.88) .. (237.42,7.81) ;  
%Shape: Arc [id:dp16107632853376153] 
\draw  [draw opacity=0] (279.38,7.25) .. controls (279.41,7.67) and (279.42,8.11) .. (279.41,8.54) .. controls (279.29,18.36) and (271.12,26.23) .. (261.18,26.1) .. controls (251.24,25.98) and (243.29,17.91) .. (243.41,8.09) .. controls (243.42,7.87) and (243.42,7.64) .. (243.44,7.42) -- (261.41,8.32) -- cycle ; \draw   (279.38,7.25) .. controls (279.41,7.67) and (279.42,8.11) .. (279.41,8.54) .. controls (279.29,18.36) and (271.12,26.23) .. (261.18,26.1) .. controls (251.24,25.98) and (243.29,17.91) .. (243.41,8.09) .. controls (243.42,7.87) and (243.42,7.64) .. (243.44,7.42) ;  
%Straight Lines [id:da7676837161489629] 
\draw    (305.43,186.68) -- (440,187.54) ;
%Straight Lines [id:da24125610243222617] 
\draw    (304,170.3) -- (440,170.3) ;
%Shape: Ellipse [id:dp8117672166732954] 
\draw   (105.71,170.3) .. controls (105.71,160.77) and (113.39,153.05) .. (122.86,153.05) .. controls (132.32,153.05) and (140,160.77) .. (140,170.3) .. controls (140,179.82) and (132.32,187.54) .. (122.86,187.54) .. controls (113.39,187.54) and (105.71,179.82) .. (105.71,170.3) -- cycle ;
%Shape: Ellipse [id:dp47454569918350265] 
\draw   (97.14,170.3) .. controls (97.14,156.01) and (108.66,144.43) .. (122.86,144.43) .. controls (137.06,144.43) and (148.57,156.01) .. (148.57,170.3) .. controls (148.57,184.58) and (137.06,196.17) .. (122.86,196.17) .. controls (108.66,196.17) and (97.14,184.58) .. (97.14,170.3) -- cycle ;
%Shape: Arc [id:dp5226432632796856] 
\draw  [draw opacity=0] (357.5,197.91) .. controls (350.45,245.71) and (309.48,282.39) .. (260,282.39) .. controls (205.56,282.39) and (161.43,238) .. (161.43,183.23) .. controls (161.43,128.46) and (205.56,84.07) .. (260,84.07) .. controls (307.23,84.07) and (346.71,117.49) .. (356.33,162.1) -- (260,183.23) -- cycle ; \draw   (357.5,197.91) .. controls (350.45,245.71) and (309.48,282.39) .. (260,282.39) .. controls (205.56,282.39) and (161.43,238) .. (161.43,183.23) .. controls (161.43,128.46) and (205.56,84.07) .. (260,84.07) .. controls (307.23,84.07) and (346.71,117.49) .. (356.33,162.1) ;  
%Shape: Arc [id:dp15523683760613372] 
\draw  [draw opacity=0] (350.01,196.78) .. controls (343.29,239.16) and (305.55,271.62) .. (260,271.62) .. controls (209.7,271.62) and (168.93,232.04) .. (168.93,183.23) .. controls (168.93,134.42) and (209.7,94.85) .. (260,94.85) .. controls (303.4,94.85) and (339.71,124.31) .. (348.85,163.74) -- (260,183.23) -- cycle ; \draw   (350.01,196.78) .. controls (343.29,239.16) and (305.55,271.62) .. (260,271.62) .. controls (209.7,271.62) and (168.93,232.04) .. (168.93,183.23) .. controls (168.93,134.42) and (209.7,94.85) .. (260,94.85) .. controls (303.4,94.85) and (339.71,124.31) .. (348.85,163.74) ;  
%Straight Lines [id:da5053339251150923] 
\draw  [dash pattern={on 0.84pt off 2.51pt}]  (348.85,163.74) -- (350.01,196.78) ;
%Straight Lines [id:da5084655590959668] 
\draw  [dash pattern={on 0.84pt off 2.51pt}]  (356.33,162.1) -- (357.5,197.91) ;

% Text Node
\draw (286,115) node [anchor=north west][inner sep=0.75pt]  [font=\small] [align=left] {A};
% Text Node
\draw (252.86,35) node [anchor=north west][inner sep=0.75pt]  [font=\small] [align=left] {A};
% Text Node
\draw (150,100) node [anchor=north west][inner sep=0.75pt]  [font=\small] [align=left] {D};
% Text Node
\draw (390,140) node [anchor=north west][inner sep=0.75pt]  [font=\small] [align=left] {C};
% Text Node
\draw (97.14,200) node [anchor=north west][inner sep=0.75pt]  [font=\small] [align=left] {B};

\end{tikzpicture}
\caption{The four types ofcorridors in an annular diagram for $M(G)$}
\label{fig: four_types_of_corridors}
\end{figure}
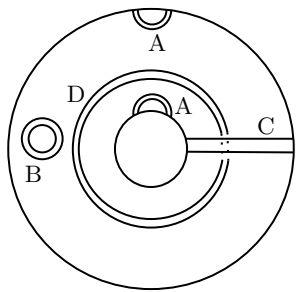

As reflected in this figure, we say a $q$-corridor is of type A, B, C, or D, depending on which of the above possibilities it falls into. In our arguments below, we also write ``$q$-corridors" when the type is understood from the context, or irrelevant to the purpose at hand. The planarity of the diagram implies no two $q$-corridors intersect, so the $q$-corridor D cannot coexist with C (this is why we have drawn part of corridor D with dashes). 

We now make two observations on $q$-corridors, both apparent by inspection of the relations.

\begin{remark} All the $q$-edges inside a $q$-corridor must be directed towards the same boundary component of the corridor. The word along that component must be in $K_1,$ and the word along the other must be in $K_{-1}.$ We will call these the ``boundary words" of the corridor. As a matter of convention, we consider the boundary words to be the words along the boundary component \textit{before} freely reducing.
\end{remark}
\begin{remark}  Every cell has a $\theta$-letter on both sides. If two of them cancel, then the cells words along their corresponding cells must cancel in their entirety, and so we can remove them according to the diagrams in Figure \ref{fig:qcor_cancellation}, and still have a diagram with the same words along the boundary components. Note that no paths are lengthened by this procedure.

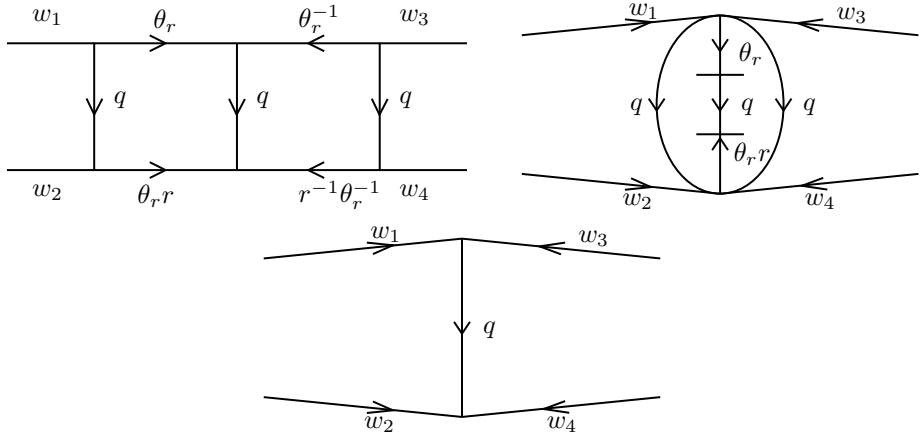
\begin{figure}
    \centering
$\begin{matrix}

\tikzset{every picture/.style={line width=0.75pt}} %set default line width to 0.75pt        

\begin{tikzpicture}[x=0.3pt,y=0.3pt,yscale=-1,xscale=1]
%uncomment if require: \path (0,496); %set diagram left start at 0, and has height of 496

%Shape: Path Data [id:dp8771089682561546] 
\draw   (130,380) -- (130,220) -- (490,220) -- (490,380) -- (130,380) -- cycle ;
%Straight Lines [id:da25872625811520233] 
\draw    (310,220) -- (310,380) ;
%Straight Lines [id:da5359784915208738] 
\draw    (20,220) -- (130,220) ;
%Straight Lines [id:da9666787563318813] 
\draw    (20,380) -- (130,380) ;
%Straight Lines [id:da4723319201614722] 
\draw    (490,220) -- (600,220) ;
%Straight Lines [id:da3836848674382227] 
\draw    (490,380) -- (600,380) ;
\draw   (200,210) -- (220,220) -- (200,230) ;
\draw   (200,370) -- (220,380) -- (200,390) ;
\draw   (420,390) -- (399.96,380.07) -- (419.93,370) ;
\draw   (420,230) -- (399.96,220.07) -- (419.93,210) ;
\draw   (140.31,290) -- (130,309.85) -- (120.31,289.7) ;
\draw   (320,290.31) -- (309.69,310.15) -- (300,290) ;
\draw   (500,290.31) -- (489.69,310.15) -- (480,290) ;

% Text Node
\draw (201,173.4) node [anchor=north west][inner sep=0.75pt] {$\theta _{r}$};
% Text Node
\draw (384,168) node [anchor=north west][inner sep=0.75pt] {$\theta _{r}^{-1}$};
% Text Node
\draw (181,395) node [anchor=north west][inner sep=0.75pt] {$\theta _{r} r$};
% Text Node
\draw (384,390) node [anchor=north west][inner sep=0.75pt] {$r^{-1} \theta _{r}^{-1}$};
% Text Node
\draw (48,173.4) node [anchor=north west][inner sep=0.75pt] {$w_{1}$};
% Text Node
\draw (48,395) node [anchor=north west][inner sep=0.75pt] {$w_{2}$};
% Text Node
\draw (511,173.4) node [anchor=north west][inner sep=0.75pt] {$w_{3}$};
% Text Node
\draw (511,395) node [anchor=north west][inner sep=0.75pt] {$w_{4}$};
% Text Node
\draw (151,277.4) node [anchor=north west][inner sep=0.75pt] {$q$};
% Text Node
\draw (331,277.4) node [anchor=north west][inner sep=0.75pt] {$q$};
% Text Node
\draw (511,277.4) node [anchor=north west][inner sep=0.75pt] {$q$};

\end{tikzpicture}

&&

\tikzset{every picture/.style={line width=0.75pt}} %set default line width to 0.75pt        

\begin{tikzpicture}[x=0.3pt,y=0.3pt,yscale=-.5,xscale=1]
%uncomment if require: \path (0,534); %set diagram left start at 0, and has height of 534

%Straight Lines [id:da2759112020162342] 
\draw    (300,50) -- (300,500) ;
%Straight Lines [id:da30587044744949976] 
\draw    (300,50) -- (50,100) ;
%Straight Lines [id:da5924417427795297] 
\draw    (50,450) -- (300,500) ;
%Straight Lines [id:da8679668573687724] 
\draw    (300,50) -- (550,100) ;
%Straight Lines [id:da024239259332862195] 
\draw    (550,450) -- (300,500) ;
%Shape: Arc [id:dp38238037996107144] 
\draw  [draw opacity=0] (300,500) .. controls (300,500) and (300,500) .. (300,500) .. controls (255.82,500) and (220,399.26) .. (220,275) .. controls (220,150.74) and (255.82,50) .. (300,50) -- (300,275) -- cycle ; \draw   (300,500) .. controls (300,500) and (300,500) .. (300,500) .. controls (255.82,500) and (220,399.26) .. (220,275) .. controls (220,150.74) and (255.82,50) .. (300,50) ;  
%Shape: Arc [id:dp8983798746101479] 
\draw  [draw opacity=0] (300,50) .. controls (300,50) and (300,50) .. (300,50) .. controls (344.18,50) and (380,150.74) .. (380,275) .. controls (380,399.26) and (344.18,500) .. (300,500) -- (300,275) -- cycle ; \draw   (300,50) .. controls (300,50) and (300,50) .. (300,50) .. controls (344.18,50) and (380,150.74) .. (380,275) .. controls (380,399.26) and (344.18,500) .. (300,500) ;  
\draw   (180,60) -- (212.86,66.72) -- (187.76,88.98) ;
\draw   (187.76,461.02) -- (212.86,483.28) -- (180,490) ;
\draw   (424.79,89.54) -- (397.85,69.56) -- (430,60) ;
\draw   (435.21,489.54) -- (403.06,479.98) -- (430,460) ;
\draw   (230,260) -- (220,290) -- (210,260) ;
\draw   (390,260) -- (380,290) -- (370,260) ;
%Straight Lines [id:da6102710921885879] 
\draw    (270,200) -- (330,200) ;
%Straight Lines [id:da12117049692898152] 
\draw    (270,350) -- (330,350) ;
\draw   (310,110) -- (300,140) -- (290,110) ;
\draw   (310,260) -- (300,290) -- (290,260) ;
\draw   (290,390) -- (300,360) -- (310,390) ;

% Text Node
\draw (181,13.4) node [anchor=north west][inner sep=0.75pt] {$w_{1}$};
% Text Node
\draw (173,492.4) node [anchor=north west][inner sep=0.75pt] {$w_{2}$};
% Text Node
\draw (403,492.4) node [anchor=north west][inner sep=0.75pt] {$w_{4}$};
% Text Node
\draw (443,22.4) node [anchor=north west][inner sep=0.75pt] {$w_{3}$};
% Text Node
\draw (183,252.4) node [anchor=north west][inner sep=0.75pt] {$q$};
% Text Node
\draw (401,252.4) node [anchor=north west][inner sep=0.75pt] {$q$};
% Text Node
\draw (320,112.4) node [anchor=north west][inner sep=0.75pt] {$\theta _{r}$};
% Text Node
\draw (297,353.4) node [anchor=north west][inner sep=0.75pt] {$ \begin{array}{l}
\theta _{r} r\\
\end{array}$};
% Text Node
\draw (323,252.4) node [anchor=north west][inner sep=0.75pt] {$q$};

\end{tikzpicture}

\end{matrix}$

\tikzset{every picture/.style={line width=0.75pt}} %set default line width to 0.75pt        

\begin{tikzpicture}[x=0.3pt,y=0.3pt,yscale=-.5,xscale=1]
%uncomment if require: \path (0,534); %set diagram left start at 0, and has height of 534

%Straight Lines [id:da2759112020162342] 
\draw    (300,50) -- (300,500) ;
%Straight Lines [id:da30587044744949976] 
\draw    (300,50) -- (50,100) ;
%Straight Lines [id:da5924417427795297] 
\draw    (50,450) -- (300,500) ;
%Straight Lines [id:da8679668573687724] 
\draw    (300,50) -- (550,100) ;
%Straight Lines [id:da024239259332862195] 
\draw    (550,450) -- (300,500) ;
\draw   (180,60) -- (212.86,66.72) -- (187.76,88.98) ;
\draw   (187.76,461.02) -- (212.86,483.28) -- (180,490) ;
\draw   (424.79,89.54) -- (397.85,69.56) -- (430,60) ;
\draw   (435.21,489.54) -- (403.06,479.98) -- (430,460) ;
\draw   (310,260) -- (300,290) -- (290,260) ;

% Text Node
\draw (181,13.4) node [anchor=north west][inner sep=0.75pt] {$w_{1}$};
% Text Node
\draw (173,492.4) node [anchor=north west][inner sep=0.75pt] {$w_{2}$};
% Text Node
\draw (403,492.4) node [anchor=north west][inner sep=0.75pt] {$w_{4}$};
% Text Node
\draw (443,22.4) node [anchor=north west][inner sep=0.75pt] {$w_{3}$};
% Text Node
\draw (323,252.4) node [anchor=north west][inner sep=0.75pt] {$q$};

\end{tikzpicture}
    \caption{Cancelling along a $q$-corridor}
    \label{fig:qcor_cancellation}
\end{figure}

Of course, the same can be done for the case of $\theta_xxx^{-1}\theta_x^{-1}.$ This removal only shortens paths in the diagram, so doing this will only shorten the length of the conjugator $\gamma$ or keep it the same. This means that we can assume the words along $q$-corridors are reduced in the generators $K_{-1}$ or $K_1$. (Of course, they are not necessarily reduced in $H$, since if $X=\{a,b\}$ and $R=\{ab\}$, then the word $\theta_bbb^{-1}a^{-1}\theta_{ab}^{-1}\in K_1$ is not reduced.)
\end{remark}

We also have $\theta$-corridors in our diagrams, which similarly do not intersect each other (although a $\theta$-corridor can intersect a $q$-corridor). These are of the same types as $q$-corridors, and when necessary we will refer to them accordingly. This leads us to one final geometric lemma:

\begin{lemma}\label{thcorwrap} Suppose we are given an annular diagram whose inner and outer boundary words do not contain any $\theta$-letter, and which has at least one $q$-corridor $Q$ of type C whose boundary words are reduced in $K_{-1}$ and $K_1$. Then, for every $\theta$-edge $e$ on the boundary of $Q$ there exists a $\theta$-corridor of type D which contains $e$ and intersects $Q$ exactly once.
\end{lemma}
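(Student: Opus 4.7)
The plan is to combine a topological argument that $T$ closes up with an algebraic one that exploits the reducedness of $\partial Q$ to pin down the single crossing. Let $T$ be the $\theta_\alpha$-corridor containing $e$, where $\theta_\alpha$ is the label of $e$. Since the inner and outer boundary words of the annulus contain no $\theta$-letters, $T$ cannot terminate on either boundary; hence $T$ is a closed loop, either contractible in the annulus (type A) or encircling the inner boundary (type D), and in either case $T$ meets $Q$ at least at the cell containing $e$.

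To show $|T\cap Q|=1$, I would argue by contradiction. Suppose $T$ crosses $Q$ in at least two cells; traversing the loop $T$, pick two crossings $C,C'$ consecutive along $T$, so that the arc $T_0\subset T$ joining them lies on one side of $\partial Q$. Together with the segment of $\partial Q$ between $C$ and $C'$ on that side, $T_0$ bounds a planar subdisk $D'$ of the annular diagram. Inspection of the three cell types of Figure \ref{fig:MG_cells} shows that in each case the two $\theta_\alpha$-edges lie on opposite sides of the cell, so either boundary side of a $\theta_\alpha$-corridor contains only letters from $X\cup\{q\}$ and no $\theta$-letters. Call the $T_0$-side of $\partial D'$ (a word of this form) $v$, and let $u$ be the other side, a segment of $\partial Q$ from $C$ to $C'$; thus $u\in K_{-1}$ or $K_1$, in particular $u\in H=\langle X\rangle\times\langle \Theta\rangle$, and $u$ begins and ends with a $\theta_\alpha^{\pm 1}$ edge. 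Reading $\partial D'$ as a disk diagram gives $u=_{M(G)}v$, and since $v\in\langle X,q\rangle_{M(G)}$ while $H\cap\langle X,q\rangle_{M(G)}=\langle X\rangle_{M(G)}$, the projection of $u$ onto $\langle \Theta\rangle$ must be trivial. The $\theta$-letters of $u$ are precisely the top-$\theta$-edges contributed by the consecutive cells of $Q$ from $C$ to $C'$, so the corresponding non-empty word in $\langle \Theta\rangle$ freely reduces to $1$ and hence contains an adjacent cancelling pair $\theta_\beta\theta_\beta^{-1}$. Two such adjacent cells of $Q$ necessarily contribute mutually inverse generators of $K_{\pm 1}$ to $\partial Q$, producing exactly the cancellation shown in Figure \ref{fig:qcor_cancellation} and contradicting reducedness of the boundary words of $Q$.

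Once $|T\cap Q|=1$ has been established, it remains to rule out type A. This follows from the topological observation that the intersection number of a closed loop in the annulus with the type C arc $Q$ has parity equal to the winding number of the loop around the inner boundary; the odd value $1$ forces $T$ to be non-contractible, hence of type D. The main obstacle in carrying out this plan will be careful verification of the orientation conventions so that $u$ really is a sub-word of the reduced $K_{\pm 1}$-expression for $\partial Q$ and that the detected cancellation in $\langle \Theta\rangle$ is a genuine cancellation of adjacent generators of that expression.
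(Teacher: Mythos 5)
Your proposal is correct and takes essentially the same route as the paper: two consecutive crossings of the $\theta$-corridor with $Q$ cobound a disk whose two boundary words are equated in $M(G)$, and the algebraic structure of $M(G)$ (your $H\cap\langle X,q\rangle_{M(G)}=\langle X\rangle_{M(G)}$, the paper's Corollary \ref{KintersectXq} together with the map $\varphi'$) forces an adjacent cancelling pair of $\theta$-letters along $Q$'s boundary, contradicting reducedness. The remaining differences are cosmetic: you handle both closed corridor types uniformly and finish with a parity/winding argument (your ``type A'' closed loop is the paper's type B), whereas the paper first excludes contractible $\theta$-corridors and then treats a type-D backtrack, and your claim that $u$ lies in $K_{\pm1}$ is slightly imprecise at its ends (the paper writes the segment as $w_1\delta w_2$ with $\delta\in K_{\pm1}$, $w_1,w_2\in\langle X\rangle_{M(G)}$), but since you only use $u\in H$ and the triviality of its $\Theta$-projection, nothing is lost.
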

\begin{proof} Note that every $\theta$-edge in this diagram must be part of some $\theta$-corridor, so we need only show that the corridor containing $e$ has the desired properties. By our assumptions, the only $\theta$-corridors in this diagram are of type D and B. We first claim that no $\theta$-corridor of type $B$ can cross $Q$. Indeed, if one does so, it must do so at two $\theta$-edges $e$ and $e'$, which have the same label and opposite orientations. If a $\theta$-edge lies between $e$ and $e',$ every letter on that side of its cell must lie between $e$ and $e'$ as well. We can therefore write the word between $e$ and $e'$ as $w_1\delta w_2,$ where $\delta\in K_{\pm 1}$ and $w_1,w_2\in\langle X\rangle_{M(G)}.$ If $w'\in \langle X,q\rangle_{M(G)}$ is the boundary word along the $\theta$-corridor, we must have $w_1\delta w_2=_{M(G)} w',$ or rather, $\delta=_{M(G)} w_1^{-1}w'w_2^{-1}.$ But $w_1^{-1}w'w_2^{-1}\in\langle X,q\rangle_{M(G)},$ so Corollary \ref{KintersectXq} implies $\delta=_{M(G)}1$. Thus, either $\delta$ is not reduced in $\pm1$, or it is the trivial word. The latter implies the $\theta$-corridor crosses at adjacent edges, so either case contradicts the assumption that the words along $Q$ are reduced. This situation is depicted in the Figure \ref{fig:corridor_crossing}.

\begin{figure}

    \centering

\tikzset{every picture/.style={line width=0.75pt}} %set default line width to 0.75pt        

\begin{tikzpicture}[x=0.5pt,y=0.5pt,yscale=-1,xscale=1]
%uncomment if require: \path (0,494); %set diagram left start at 0, and has height of 494

%Shape: Rectangle [id:dp7680550960956181] 
\draw   (10,245) -- (81.11,245) -- (81.11,290.56) -- (10,290.56) -- cycle ;
%Shape: Rectangle [id:dp7264544724899178] 
\draw   (81.11,245) -- (152.22,245) -- (152.22,290.56) -- (81.11,290.56) -- cycle ;
%Shape: Rectangle [id:dp022574652278513963] 
\draw   (152.22,245) -- (223.33,245) -- (223.33,290.56) -- (152.22,290.56) -- cycle ;
%Shape: Rectangle [id:dp3041554894369043] 
\draw   (578.89,245) -- (650,245) -- (650,290.56) -- (578.89,290.56) -- cycle ;
%Shape: Arc [id:dp8183916417924353] 
\draw  [draw opacity=0] (10,245) .. controls (10,245) and (10,245) .. (10,245) .. controls (10,131.78) and (153.27,40) .. (330,40) .. controls (506.73,40) and (650,131.78) .. (650,245) -- (330,245) -- cycle ; \draw   (10,245) .. controls (10,245) and (10,245) .. (10,245) .. controls (10,131.78) and (153.27,40) .. (330,40) .. controls (506.73,40) and (650,131.78) .. (650,245) ;  
%Shape: Arc [id:dp4622761470706356] 
\draw  [draw opacity=0][line width=3]  (50.63,245) .. controls (50.63,157.14) and (175.71,85.91) .. (330,85.91) .. controls (484.29,85.91) and (609.37,157.14) .. (609.37,245) -- (330,245) -- cycle ; \draw  [ draw opacity=.75 ][line width=3]  (50.63,245) .. controls (50.63,157.14) and (175.71,85.91) .. (330,85.91) .. controls (484.29,85.91) and (609.37,157.14) .. (609.37,245) ;  
%Straight Lines [id:da2240931705508764] 
\draw [ draw opacity=.5 ][line width=3]    (50.63,245) -- (81.11,245) ;
%Straight Lines [id:da7762324805732752] 
\draw [ draw opacity=.5 ][line width=3]    (578.89,245) -- (609.37,245) ;
%Straight Lines [id:da6848982867418758] 
\draw [draw opacity=1 ][line width=3]    (81.11,245) -- (578.89,245) ;
%Straight Lines [id:da3234183396723902] 
\draw    (27.33,178.67) -- (60,200) ;
%Straight Lines [id:da5339006242555207] 
\draw    (62,133) -- (89.67,161.67) ;
%Straight Lines [id:da3711699632696057] 
\draw    (223.33,290.56) -- (580,290) ;

% Text Node
\draw (309.86,216.48) node [anchor=north west][inner sep=0.75pt]  [font=\large]  {$w'$};
% Text Node
\draw (315.15,92.91) node [anchor=north west][inner sep=0.75pt]  [font=\large]  {$\delta $};
% Text Node
\draw (45,252.4) node [anchor=north west][inner sep=0.75pt]  [font=\large]  {$w_{1}$};
% Text Node
\draw (581,252.4) node [anchor=north west][inner sep=0.75pt]  [font=\large]  {$w_{2}$};
% Text Node
\draw (47,168.4) node [anchor=north west][inner sep=0.75pt]  [font=\large]  {$q$};
% Text Node
\draw (77,122.4) node [anchor=north west][inner sep=0.75pt]  [font=\large]  {$q$};
% Text Node
\draw (92.31,124.05) node [anchor=north west][inner sep=0.75pt]  [rotate=-320]  {$...$};
% Text Node
\draw (121,58) node [anchor=north west][inner sep=0.75pt]    {$Q$};
% Text Node
\draw (84,262.4) node [anchor=north west][inner sep=0.75pt]  [font=\large]  {$\theta _{\alpha }$};
% Text Node
\draw (154,262.4) node [anchor=north west][inner sep=0.75pt]  [font=\large]  {$\theta _{\alpha }$};
% Text Node
\draw (224,262.4) node [anchor=north west][inner sep=0.75pt]  [font=\large]  {$\theta _{\alpha } \ \ \ ...$};
% Text Node
\draw (24.33,218.4) node [anchor=north west][inner sep=0.75pt]  [font=\large]  {$q$};

\end{tikzpicture}
    \caption{Corridor crossing}

    \label{fig:corridor_crossing}
\end{figure}
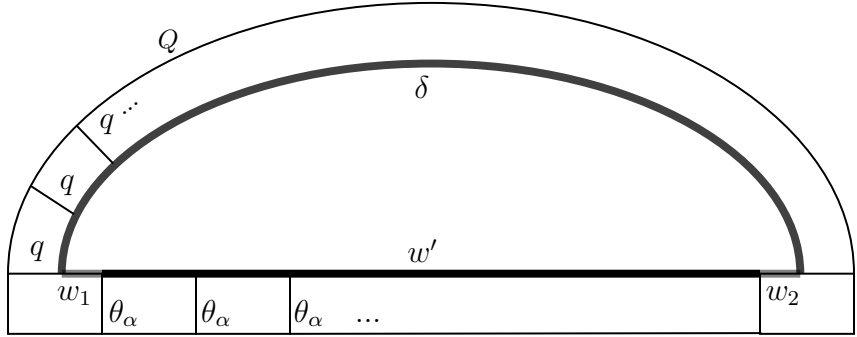

Now, if $\delta$ has no $\theta$-letters, then we can cancel the letters corresponding to $e$ and $e'$ using commutation relations. By the discussion in Subsection \ref{stabletterqHNN}, this contradicts our assumption that the words along $Q$'s boundary are reduced in $K_{\pm 1}.$ Alternatively, if it does have $\theta$-letters, then we still have $\varphi'(\delta)=1,$ where $\varphi'$ is the extended homomorphism constructed in the proof of Corollary \ref{KintersectXq}. This implies that the $\theta$-letters can freely cancel in $\langle X\rangle_{M(G)}.$ Since $\delta$ only contains $\theta$-letters and $X$-letters, which commute, the $\theta$-letters in $\delta$ can therefore be cancelled using commutation relations in $M(G),$ which gives the same contradiction. Therefore, there is no $\theta$-corridor of type B.

 Finally, if a $\theta$-corridor of type D crosses $Q$ more than once, then at some point it must ``backtrack" across $Q$. This will create a region bounded on two sides, one of them a $q$-corridor and the other a $\theta$-corridor, with the two crossing at $\theta$-edges. This gives the same contradiction reached above.

\end{proof}

Our discussion of $q$ and $\theta$-corridors is a special case of Collins' Lemma, and we refer the reader to Part III of \cite{brady2007geometry} for more general details.

\section{Conjugation in $M(G)$}\label{conjMG}
In this section, we analyse the relation $\approx$, which we will use to prove Theorem A in the next section. Recall that, for any two $k$-tuples $\textbf{u}=(u_1,...,u_k)$ and $ \textbf{v}=(v_1,...,v_k)$ of words on $X$, and any $k$-tuple $\sigma=(\sigma_1,...,\sigma_k)$ of elements of $\{\pm1\}$, we write $\textbf{u}\overset{\sigma}{\sim}\textbf{v}$ if $\sigma_i=-\sigma_{i+1}$ implies $u_i\not\equiv 1\not\equiv v_i$ for $i<k$, and there exists words $w,\varepsilon$ on the generators of $G$ such that $\varepsilon=_G1$ and: \begin{itemize}
    \item If $\sigma_i=\sigma_{i+1}=1$, then $w\varepsilon u_i w^{-1}\equiv v_i$
    \item If $\sigma_i=1,\sigma_{i+1}=-1$, then $w\varepsilon u_i \varepsilon^{-1}w^{-1}\equiv v_i$
    \item If $\sigma_{i}=-1,\sigma_{i+1}=1$, then $wu_iw^{-1}\equiv v_i$
    \item If $\sigma_i=\sigma_{i+1}= -1$, then $wu_i\varepsilon^{-1}w^{-1}\equiv v_i$.
\end{itemize}  
Also, recall that $\langle X,q\rangle_{M(G)}$ and $\langle \Theta\rangle_{M(G)}$ are free groups, so we may speak of freely reduced elements thereof.

We proceed with two lemmas and their corollaries:
\begin{lemma}\label{conjbythetas}
    For any $\tau_0\in \langle\Theta\rangle_{M(G)}$, there exist words  $w$ and $\varepsilon$ over $X$ such that $\varepsilon=_G1$ and $$\tau_0 q\tau_0^{-1}=_{M(G)}w^{-1}qw\varepsilon.$$ Moreover, $||w||\leq ||\tau_0||$.
\end{lemma}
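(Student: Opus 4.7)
The plan is to prove the lemma by induction on $||\tau_0||$, leveraging the HNN description of $M(G)$ with stable letters $\theta_\alpha$ from Subsection~\ref{stableletterthetaHNN}, in which $\theta_x q \theta_x^{-1} = x^{-1} q x$ for $x \in X$ and $\theta_r q \theta_r^{-1} = qr$ for $r \in R$.

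The base case $\tau_0 \equiv 1$ is immediate: take $w \equiv \varepsilon \equiv 1$. For the inductive step, since $\tau_0$ is freely reduced in $F(\Theta)$, I write $\tau_0 \equiv \theta_\alpha^{\epsilon}\tau_1$ with $\epsilon \in \{\pm 1\}$, $\alpha \in X \cup R$, and $||\tau_1|| = ||\tau_0|| - 1$. By the inductive hypothesis there exist $X$-words $w_1, \varepsilon_1$ with $\tau_1 q \tau_1^{-1} =_{M(G)} w_1^{-1} q w_1 \varepsilon_1$, $\varepsilon_1 =_G 1$, and $||w_1|| \leq ||\tau_1||$.

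Next I use the commutation relation $\theta_\alpha x = x \theta_\alpha$ to slide the outer $\theta_\alpha^{\epsilon}$ and $\theta_\alpha^{-\epsilon}$ past the $X$-factors $w_1^{-1}, w_1, \varepsilon_1$ until they sit on either side of the central $q$, yielding $\tau_0 q \tau_0^{-1} =_{M(G)} w_1^{-1} \theta_\alpha^{\epsilon} q \theta_\alpha^{-\epsilon} w_1 \varepsilon_1$. I then split into cases. For $\alpha = x \in X$, the HNN relation gives $\theta_x^{\epsilon} q \theta_x^{-\epsilon} = x^{-\epsilon} q x^{\epsilon}$, so setting $w = x^{\epsilon} w_1$ and $\varepsilon = \varepsilon_1$ produces the required form with $||w|| \leq 1 + ||w_1|| \leq ||\tau_0||$. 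For $\alpha = r \in R$, the HNN relation gives $\theta_r^{\epsilon} q \theta_r^{-\epsilon} = q r^{\epsilon}$, and the rewriting $w_1^{-1} q r^{\epsilon} w_1 \varepsilon_1 = (w_1^{-1} q w_1)(w_1^{-1} r^{\epsilon} w_1 \varepsilon_1)$ lets me take $w = w_1$ and $\varepsilon = w_1^{-1} r^{\epsilon} w_1 \varepsilon_1$; since $r =_G 1$ and $\varepsilon_1 =_G 1$, this new $\varepsilon$ still represents the identity of $G$, and $||w|| = ||w_1|| \leq ||\tau_0||$.

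There is no substantive obstacle beyond bookkeeping: the $X$-case pays for the extra $\theta_x^{\pm 1}$ by extending $w$ by one letter, matching the increase in $||\tau_0||$, while the $R$-case absorbs the extra relator into $\varepsilon$, which carries no length constraint. Any free cancellation occurring when $x^{\epsilon}$ is prepended to $w_1$ only shortens $w$, so the bound $||w|| \leq ||\tau_0||$ is maintained throughout the induction.
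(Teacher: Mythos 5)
Your proof is correct and follows essentially the same route as the paper: both use the defining relations $\theta_x^{\pm1} q\theta_x^{\mp1}=x^{\mp1}qx^{\pm1}$, $\theta_r^{\pm1} q\theta_r^{\mp1}=qr^{\pm1}$ together with the commutation relations to accumulate the $X$-letters (one per $\theta_x$-letter, giving $||w||\leq||\tau_0||$) into $w$ and the conjugated relators into $\varepsilon$. The only difference is organizational: you peel off one $\theta$-letter at a time by induction and normalize $\varepsilon$ at each step, whereas the paper conjugates by all of $\tau_0$ in one pass, obtaining $w^{-1}qw'$ with $w'$ equal to $w$ with relators inserted, and then sets $\varepsilon$ so that $w'\equiv w\varepsilon$.
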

\begin{proof}
    Every element of $\Theta^{\pm1}$ can be written $\theta_\alpha^{\delta}$ for some $\delta\in \pm1$ and $\alpha\in X\cup R$. We have $$\theta_\alpha^{\delta}x\theta_{\alpha}^{-\delta}=_{M(G)}x$$ for any $x\in X$ and $\delta\in \{\pm1\}$. Also, if $\alpha=x\in X$, then by the second defining relation for $M(G)$, $$\theta_\alpha^\delta q\theta_\alpha^{-\delta}=_{M(G)}x^{-\delta}qx^\delta,$$ while if $\alpha=r\in R,$ then the third defining relation gives $$\theta_\alpha^\delta q\theta_\alpha^{-\delta}=_{M(G)}qr^\delta.$$ We know $\tau_0$ is a product of elements of $\Theta^{\pm1}$, so $\tau q\tau^{-1}=_{M(G)}w^{-1}qw',$ where $w,w'$ are words on $X$ such that $w'$ is identical to $w$ except that elements of $R^{\pm1}$ inserted according to the $\theta_r$'s in $\tau_0$. Also, every letter of $w$ corresponds to a $\theta_x$-letter in $\tau_0,$ so $||w||\leq ||\tau_0||$.

    Now, $r=_G1$ for all $r\in R^{\pm1},$ so $w'=_Gw.$ But $G$ is a finitely presented group, so as words we must have $w'\equiv w\varepsilon$ for some $\varepsilon$ in the normal closure of $R$ in $F(X).$ Equivalently, we have $w'\equiv w\varepsilon$ for some $\varepsilon=_G1$, so $w^{-1}qw'\equiv w^{-1}qw\varepsilon$ as desired.

\end{proof} 
This lemma has two corollaries, which we now prove.

\begin{corollary}\label{invconjbythetas}
    For $\tau_0\in \langle \Theta\rangle_{M(G)}$, there exist words  $w$ and $\varepsilon$ over $X$ such that $\varepsilon=_G1$ and $$\tau_0 q^{-1}\tau_0^{-1}=_{M(G)}(w\varepsilon)^{-1}q^{-1}w.$$ 
\end{corollary}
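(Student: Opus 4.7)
The plan is to deduce this corollary directly from Lemma \ref{conjbythetas} by taking inverses of the equation in that lemma. First I would apply Lemma \ref{conjbythetas} to the element $\tau_0 \in \langle \Theta \rangle_{M(G)}$ to obtain words $w$ and $\varepsilon$ over $X$ with $\varepsilon =_G 1$ satisfying
$$\tau_0 q \tau_0^{-1} =_{M(G)} w^{-1} q w \varepsilon.$$
Taking inverses of both sides yields
$$\tau_0 q^{-1} \tau_0^{-1} =_{M(G)} \varepsilon^{-1} w^{-1} q^{-1} w.$$

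The only remaining step is to observe that $\varepsilon^{-1} w^{-1} \equiv (w\varepsilon)^{-1}$ freely, so that the right-hand side can be rewritten as $(w\varepsilon)^{-1} q^{-1} w$, matching the form required by the statement of the corollary. The same words $w$ and $\varepsilon$ produced by Lemma \ref{conjbythetas} therefore witness the corollary.

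There is no real obstacle here; this is a bookkeeping argument that simply inverts the identity from the previous lemma and repackages the resulting expression. I would not expect to need any further structural facts about $M(G)$ beyond Lemma \ref{conjbythetas} itself. (If a length bound analogous to $\|w\| \leq \|\tau_0\|$ were desired, it would be inherited for free from the lemma, since the same $w$ is used.)
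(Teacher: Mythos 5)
Your proof is correct and is exactly the paper's argument: the paper also obtains the corollary by inverting the identity of Lemma \ref{conjbythetas} and rewriting $\varepsilon^{-1}w^{-1}$ as $(w\varepsilon)^{-1}$. Nothing further is needed.
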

\begin{proof}
    This follows by inverting the equation in Lemma \ref{conjbythetas}.
\end{proof}
\begin{corollary}\label{thetaspreserveXqs}
    If $w\in \langle X,q\rangle_{M(G)}$ and $\tau_0\in \langle\Theta\rangle_{M(G)}$, then we have $\tau_0 \alpha\tau_0^{-1}\in~\langle X,q\rangle_{M(G)}$.
\end{corollary}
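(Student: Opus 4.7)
The plan is to reduce to the letter-by-letter case. Write $w$ as a word $s_1 s_2 \cdots s_n$ with each $s_i \in X^{\pm 1} \cup \{q^{\pm 1}\}$ (this is possible since $\langle X, q\rangle_{M(G)}$ is generated by $X \cup \{q\}$). Then
\[
\tau_0 w \tau_0^{-1} \;=_{M(G)}\; (\tau_0 s_1 \tau_0^{-1})(\tau_0 s_2 \tau_0^{-1}) \cdots (\tau_0 s_n \tau_0^{-1}),
\]
so it suffices to show $\tau_0 s \tau_0^{-1} \in \langle X, q\rangle_{M(G)}$ for each single-letter $s$.

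For $s \in X^{\pm 1}$, the commutation relations $\theta_\alpha x = x\theta_\alpha$ (applied repeatedly to the factors of $\tau_0$) give $\tau_0 s \tau_0^{-1} =_{M(G)} s \in \langle X, q\rangle_{M(G)}$. For $s = q$, Lemma \ref{conjbythetas} provides words $w_0, \varepsilon$ on $X$ with $\tau_0 q \tau_0^{-1} =_{M(G)} w_0^{-1} q w_0 \varepsilon$, which is manifestly in $\langle X,q\rangle_{M(G)}$. For $s = q^{-1}$, Corollary \ref{invconjbythetas} gives analogously $\tau_0 q^{-1}\tau_0^{-1} =_{M(G)} (w_0\varepsilon)^{-1} q^{-1} w_0 \in \langle X, q\rangle_{M(G)}$.

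Since $\langle X,q\rangle_{M(G)}$ is a subgroup, the product of these conjugates lies in $\langle X,q\rangle_{M(G)}$ as well, which is what we wanted. There is no real obstacle here: the two lemmas have already done all the work of computing the action of $\tau_0$ on the generator $q$, and the action on $X$-generators is trivial, so extending multiplicatively to arbitrary $w \in \langle X,q\rangle_{M(G)}$ is immediate. (Conceptually, this is also the content of the semidirect product decomposition $M(G) = \langle X,q\rangle_{M(G)} \rtimes \langle \Theta\rangle_{M(G)}$ noted in the remark of Subsection \ref{stableletterthetaHNN}.)
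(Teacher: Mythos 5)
Your proof is correct and follows essentially the same route as the paper's: reduce to conjugation of single generators, handle $X^{\pm1}$ via the commutation relations and $q^{\pm1}$ via Lemma \ref{conjbythetas} and Corollary \ref{invconjbythetas}, then use that conjugation by $\tau_0$ is a homomorphism. No gaps; the extra detail about writing $w$ letter-by-letter just makes explicit what the paper leaves implicit.
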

\begin{proof}
    Since $\tau_0 x^{\pm1} \tau_0^{-1}=_{M(G)}x^{\pm1}\in \langle X,q\rangle_{M(G)} $ by definition, and since $\tau_0 q^{\pm1}\tau_0^{-1}\in~\langle X,q\rangle_{M(G)}$ by the above Lemma and Corollary, we also have that $w\in \langle X,q\rangle_{M(G)}$ implies $\tau_0w\tau_0^{-1}\in \langle X,q\rangle_{M(G)}$.
\end{proof}
\begin{remark} Note that Corollary \ref{thetaspreserveXqs} also follows from the discussion in Section~\ref{stableletterthetaHNN}.\end{remark}
Next, we show two more lemmas that will be using in proving this section's main result.
\begin{lemma}\label{wordstoconjbythetas}
    For any word $w$ on $X$, there exists $\tau_w\in \langle \Theta\rangle_{M(G)}$ such that $\tau_w q\tau_w^{-1}=w^{-1}qw,\ ||\tau_w||=||w||$, and $w\tau_w\in K_{-1}$. Also, for any word $\varepsilon$ with $\varepsilon=_G1$, there exists a $\tau_\varepsilon\in\langle \Theta\rangle$ such that $\tau_\varepsilon q\tau_\varepsilon=q\varepsilon$, $|\tau_\varepsilon|=\lambda(\varepsilon)$, and $\tau_\varepsilon\in K_{-1}$.
\end{lemma}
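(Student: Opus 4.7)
The plan is to construct both $\tau_w$ and $\tau_\varepsilon$ as explicit ``$\theta$-lifts'' of the input words and then verify each required property by direct computation using the defining relations of $M(G)$. For the first claim, if $w=x_1^{\alpha_1}\cdots x_n^{\alpha_n}$ with $x_i\in X$ and $\alpha_i\in\{\pm 1\}$, I would set $\tau_w=\theta_{x_1}^{\alpha_1}\cdots\theta_{x_n}^{\alpha_n}\in\langle\Theta\rangle_{M(G)}$ (the ambient subgroup appears to be a typo in the statement, since the identity $\tau_w q\tau_w^{-1}=w^{-1}qw$ cannot hold for $\tau_w\in\langle X\rangle_{M(G)}$). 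Iterating the identity $\theta_x^{\delta}q\theta_x^{-\delta}=x^{-\delta}qx^\delta$ from the innermost $\theta$ outward, and using commutativity of $\theta$-letters with $X$-letters, gives $\tau_w q\tau_w^{-1}=w^{-1}qw$ by induction on $n$. The equality $||\tau_w||=||w||$ is immediate, and $w\tau_w\in K_{-1}$ follows by sliding the $X$-letters past the intervening $\theta$-letters to rewrite $w\tau_w=\prod_{i=1}^n(\theta_{x_i}x_i)^{\alpha_i}$, a product of generators of $K_{-1}$ and their inverses.

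For the second claim, fix a $\lambda$-minimal decomposition $\varepsilon\equiv\prod_{i=1}^m w_i r_i w_i^{-1}$ realising $\lambda(\varepsilon)$, and set $\tau_\varepsilon=\prod_{i=1}^m \theta_{w_i}\theta_{r_i}\theta_{w_i}^{-1}$, where $\theta_{w_i}$ is the $\theta$-lift from the first part. A direct computation handles the single-factor case: the first part supplies $\theta_w^{-1}q\theta_w=wqw^{-1}$; then $\theta_r w q w^{-1}\theta_r^{-1}=wqrw^{-1}$ (using commutativity of $\theta_r$ with $X$-letters); and finally $\theta_w(wqrw^{-1})\theta_w^{-1}=w(\theta_w q\theta_w^{-1})rw^{-1}=qwrw^{-1}$. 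An easy induction on $m$ then yields $\tau_\varepsilon q\tau_\varepsilon^{-1}=q\varepsilon$, because each partial product lies in $\langle\Theta\rangle_{M(G)}$ and therefore commutes with the subsequent $X$-word $w_m r_m w_m^{-1}$. Membership $\tau_\varepsilon\in K_{-1}$ follows by rewriting each factor as $(\theta_{w_i}w_i)\theta_{r_i}(\theta_{w_i}w_i)^{-1}$ (valid because $w_i$ commutes with $\theta_{r_i}$) and invoking the first part to conclude $\theta_{w_i}w_i\in K_{-1}$, while $\theta_{r_i}$ is a $K_{-1}$-generator by definition. Collapsing adjacent pairs $\theta_{w_i}^{-1}\theta_{w_{i+1}}$ to the freely-reduced $\theta$-word encoding $w_i^{-1}w_{i+1}$ bounds $||\tau_\varepsilon||$ by $m+||w_1||+||w_m||+\sum_{i=1}^{m-1}||w_i^{-1}w_{i+1}||=\lambda(\varepsilon)$, and hence $|\tau_\varepsilon|\leq\lambda(\varepsilon)$.

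The most delicate point is upgrading this bound to the claimed equality $|\tau_\varepsilon|=\lambda(\varepsilon)$. For the reverse inequality I would invoke the HNN structure with stable letters $\Theta$ from Section \ref{stableletterthetaHNN}: any $\tau\in\langle\Theta\rangle_{M(G)}$ with $\tau q\tau^{-1}=q\varepsilon$ determines, via the actions $\phi_\alpha$, an explicit decomposition of $\varepsilon$ as a product of conjugates of relators whose $f$-value is controlled by the number of $\theta_r$-letters in $\tau$ together with the $X$-lengths of the words read off the $\theta_x$-letters under the projection $\theta_x\mapsto x$. Showing that this $f$-value is at most $||\tau||$, and therefore that $||\tau||\geq\lambda(\varepsilon)$, requires a careful bookkeeping argument separating the roles played by $\theta_x$- and $\theta_r$-letters in $\tau$, and this combinatorial accounting is where I expect the bulk of the technical work to go.
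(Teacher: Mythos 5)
Your construction of $\tau_w$ and $\tau_\varepsilon$ and your verification of the conjugation identity and of $K_{-1}$-membership match the paper's proof. (You are also right that $\tau_w\in\langle X\rangle_{M(G)}$ in the statement is a typo for $\tau_w\in\langle\Theta\rangle_{M(G)}$, and that $\tau_\varepsilon q\tau_\varepsilon$ should read $\tau_\varepsilon q\tau_\varepsilon^{-1}$.)

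The gap is in how you handle the length equality $|\tau_\varepsilon|=\lambda(\varepsilon)$: you correctly get the upper bound, but you substantially overestimate the work needed for the reverse inequality, and the HNN ``bookkeeping'' route you sketch is both unnecessary and off-target (it would prove a lower bound on $||\tau||$ for \emph{all} conjugating $\tau$, but the lemma only asserts existence of one $\tau_\varepsilon$ attaining $\lambda(\varepsilon)$). The actual argument is short. First, freely reducing $\tau_\varepsilon=\prod_{i=1}^k\tau_{w_i}\theta_{r_i}^{\delta_i'}\tau_{w_i^{-1}}$ in $F(\Theta)$ gives $\tau_{w_1}\theta_{r_1}^{\delta_1'}\tau_{w_1^{-1}w_2}\theta_{r_2}^{\delta_2'}\cdots\theta_{r_k}^{\delta_k'}\tau_{w_k^{-1}}$, and \emph{no further cancellation can occur}: adjacent $\theta_r$-letters cancel only if $w_i=w_{i+1}$, $r_i=r_{i+1}$, and $\delta_i'=-\delta_{i+1}'$, and then deleting those two factors from the decomposition of $\varepsilon$ would give a freely equal word of strictly smaller $f$-value, contradicting the choice of a $\lambda$-minimal decomposition. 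Hence $||\tau_\varepsilon||$ equals $f$ of that decomposition, which is exactly $\lambda(\varepsilon)$ --- equality, not just $\leq$. Second, the retraction $\psi\colon M(G)\to\langle\Theta\rangle_{M(G)}$ (defined in the proof of Corollary \ref{KintersectXq}, fixing each $\theta_\alpha$ and killing $X\cup\{q\}$) shows $|\tau_\varepsilon|\geq||\psi(\tau_\varepsilon)||=||\tau_\varepsilon||$, while $|\tau_\varepsilon|\leq||\tau_\varepsilon||$ holds trivially because $\Theta$ is a subset of the generators of $M(G)$; thus $|\tau_\varepsilon|=||\tau_\varepsilon||=\lambda(\varepsilon)$. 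This is what the paper compresses into ``by the definition of $f$ and $\lambda$.''
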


\begin{proof} We may write $w=x_1^{\delta_1}x_2^{\delta_2}...x_m^{\delta_m}$ for some $x_i\in X,\delta_i\in \{\pm1\}$. A simple computation gives $\tau_w=\theta_{x_1}^{\delta_1}\theta_{x_2}^{\delta_2}...\theta_{x_m}^{\delta_m}$ that is the desired word. Also,  $|\tau_w|=|w|$ by construction. Moreover, by the relations $\theta_\alpha x=x\theta_\alpha$, we see $$w\tau_w=_{M(G)}x_1^{\delta_1}\theta_{x_1}^{\delta_1}x_2^{\delta_2}\theta_{x_1}^{\delta_2}...x_m^{\delta_m}\theta_{x_m}^{\delta_m}$$ $$=_{M(G)}(x_1\theta_{x_1})^{\delta_1}(x_2\theta_{x_2})^{\delta_2}...(x_m\theta_{x_m})^{\delta_m}$$ which is in $K_{-1}$. For the next stage of our proof, note that $\tau_{w^{-1}}=\tau_w^{-1}$.

For the second part, recall that $\varepsilon=_G1$ implies that $\varepsilon$ can be written as a word $\prod_{i=1}^k w_i r_i^{\delta'_i} w_i^{-1}$ for words $w_i$ on $X$, $r_i\in R$, and $\delta'_i\in~\{\pm1\}$. Suppose without loss of generality that this word minimizes the value of $f$ (defined in Section \ref{lamlength}) over all words freely equal to $\varepsilon$, and define $\tau_\varepsilon=\prod_{i=1}^k \tau_{w_i} \theta_{r_i}^{\delta'_i} \tau_{w_i^{-1}}$. Then we see $$\tau_\varepsilon q\tau_{\varepsilon}^{-1}=_{M(G)}\left(\prod_{i=1}^k w_i  w_i^{-1}\right)^{-1} q \left(\prod_{i=1}^k w_i r_i^{\delta'_i} w_i^{-1}\right) =_{M(G)} q \prod_{i=1}^k w_i r_i^{\delta'_i} w_i^{-1}=_{M(G)}q\varepsilon$$ as desired. By the definition of $f$ and $\lambda$, $|\tau_\varepsilon|=\lambda(\varepsilon)$.  Lastly, we have $\tau_{w^{-1}}=\tau_w^{-1}$ and $\theta_{r_i},w_i\tau_{w_i}\in K_{-1}$ for all $i$, so $$\tau_{\varepsilon}=_{M(G)}\prod_{i=1}^k \tau_{w_i} \theta_{r_i}^{\delta'_i} \tau_{w_i^{-1}}=_{M(G)}\prod_{i=1}^k w_i\tau_{w_i} \theta_{r_i}^{\delta'_i} \tau_{w_i^{-1}}w_{i}^{-1}=\prod_{i=1}^k w_i\tau_{w_i} \theta_{r_i}^{\delta'_i} \tau_{w_i}^{-1}w_{i}^{-1}$$is in $K_{-1}$ as well, so we are done.
\end{proof}
\begin{lemma}\label{sameqsafterconjugation}
Suppose $x,y\in \langle X,q\rangle_{M(G)}$ are represented by cyclically and freely reduced words, and $x\approx y$. Then $x$ and $y$ have the same $q$-letters in the same order. More precisely, if 
    $$x=u_0q^{\sigma_1}u_1q^{\sigma_2}...q^{\sigma_k}u_k$$ and $$y=v_0q^{\varepsilon_1}v_1q^{\varepsilon_2}...q^{\varepsilon_\ell}v_\ell,$$ where both of these words are and cyclically freely reduced, then $x\approx y$ implies $k=\ell$ and $\sigma_i=\varepsilon_i$ for $i=1,...,k$.
    \end{lemma}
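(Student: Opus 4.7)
The plan is to split the conjugator using the direct-product decomposition $\langle X\cup\Theta\rangle_{M(G)} = \langle X\rangle_{M(G)} \times \langle \Theta\rangle_{M(G)}$ from Subsection~\ref{stabletterqHNN}. By hypothesis there exists $\gamma \in \langle X\cup\Theta\rangle_{M(G)}$ with $\gamma x\gamma^{-1} =_{M(G)} y$; write $\gamma = w\tau_0$ with $w \in \langle X\rangle_{M(G)}$ and $\tau_0 \in \langle \Theta\rangle_{M(G)}$. Then $y = w(\tau_0 x \tau_0^{-1})w^{-1}$ in $M(G)$, and by Corollary~\ref{thetaspreserveXqs} the right-hand side lies in the free subgroup $\langle X,q\rangle_{M(G)}$. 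In particular the equality holds inside this free group, so the freely reduced form of $w(\tau_0 x \tau_0^{-1})w^{-1}$ in $\langle X,q\rangle_{M(G)}$ must be the word $y$.

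Next I would make the right-hand side explicit. Since $\tau_0$ commutes with every $u_i \in \langle X\rangle_{M(G)}$, conjugation by $\tau_0$ leaves each $u_i$ unchanged. Lemma~\ref{conjbythetas} and Corollary~\ref{invconjbythetas} yield freely reduced words $A_{\sigma},B_{\sigma}\in \langle X\rangle_{M(G)}$ (depending on $\tau_0$) such that $\tau_0 q^{\sigma}\tau_0^{-1}=A_\sigma q^\sigma B_\sigma$ in the free group; inverting the identity for $\sigma=+1$ gives $A_{-1}=B_{+1}^{-1}$ and $B_{-1}=A_{+1}^{-1}$. Substituting,
\[ y \;=\; w\,u_0 A_{\sigma_1} q^{\sigma_1} B_{\sigma_1} u_1 A_{\sigma_2} q^{\sigma_2} B_{\sigma_2} u_2 \cdots A_{\sigma_k} q^{\sigma_k} B_{\sigma_k} u_k\, w^{-1} \]
in the free group $\langle X,q\rangle_{M(G)}$, which I now want to freely reduce.

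The crux of the argument, and the main obstacle, is to verify that this free reduction destroys none of the $k$ displayed $q$-letters. Because $q$-letters cancel only with their inverses and every remaining letter lies in $\langle X\rangle_{M(G)}$, a $q$ is lost only when the $\langle X\rangle$-word between two opposite-sign consecutive $q$'s collapses to the identity in $F(X)$. For $q^{\sigma_i}$ and $q^{\sigma_{i+1}}$ this intervening word is $B_{\sigma_i}u_iA_{\sigma_{i+1}}$; when $\sigma_i=-\sigma_{i+1}$, the identities $A_{-1}=B_{+1}^{-1}$ and $B_{-1}=A_{+1}^{-1}$ reduce it to either $B_{+1}u_iB_{+1}^{-1}$ or $A_{+1}^{-1}u_iA_{+1}$, and each of these is trivial in $F(X)$ only if $u_i\equiv 1$. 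But then $x$ would contain the adjacent pair $q^{\sigma_i}q^{-\sigma_i}$, contradicting its free reducedness. The only other free-cancellations affect the initial segment $w u_0 A_{\sigma_1}$ and the final segment $B_{\sigma_k} u_k w^{-1}$, both of which lie in $\langle X\rangle_{M(G)}$ and so cannot reach past the first or last $q$. Hence the freely reduced form of $y$ has $q$-sequence exactly $(\sigma_1,\ldots,\sigma_k)$. Since $y$ is given as the cyclically (thus freely) reduced word $v_0 q^{\varepsilon_1} v_1 \cdots q^{\varepsilon_\ell} v_\ell$ and freely reduced forms in $\langle X,q\rangle_{M(G)}$ are unique, $\ell=k$ and $\varepsilon_i=\sigma_i$ for all $i$.
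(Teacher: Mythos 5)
Your proposal is correct and follows essentially the same approach as the paper's proof: decompose $\gamma = w\tau_0$ via the direct-product structure, observe that conjugation by $\tau_0$ inserts $X$-words of the form $w^{\pm1}$ and $(w\varepsilon)^{\pm1}$ around the $q$-letters, and show that a $q$-cancellation would force some $u_i$ to be trivial, contradicting the assumption that $x$ is freely reduced. Your write-up is a bit more explicit than the paper's (introducing $A_\sigma, B_\sigma$, treating both signs $\sigma_i=-\sigma_{i+1}$ symmetrically, and noting that the end segments cannot reach past the outermost $q$'s), but the core argument is identical.
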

    \begin{proof}
        Let $\gamma$ be the conjugator taking $x$ to $y$, with $\gamma$ not containing any $q$-letters. Then, $\gamma$ can be written $z\mu$ for some $z\in \langle X\rangle_{M(G)}$, $\mu\in \langle \Theta\rangle_{M(G)}$. It suffices to show that conjugation by $\mu$ does not create any free cancelations of the $q$'s. By Lemma \ref{conjbythetas} and its corollaries, there exist $w,\varepsilon\in \langle X\rangle_{M(G)}$ with $\varepsilon=_G1$ such that $\mu q\mu^{-1}=_{M(G)}w^{-1}qw\varepsilon$ and $\mu q^{-1}\mu^{-1}=_{M(G)}(w\varepsilon)^{-1}q^{-1}w$. Conjugation by $\mu$ thus inserts $w^{\pm1}$ and $(w\varepsilon)^{\pm1}$ into $x$ according to the $\sigma_i$'s. If there is any cancellation among the $q$-letters, we must have the following situation (or its inverse) $$\mu qu_iq^{-1}\mu^{-1}=_{M(G)}w_1qq^{-1}w_2=_{M(G)}1$$ for some words $w_1,w_2$ on $x$. In this case, we see $$\mu qu_iq^{-1}\mu^{-1}=_{M(G)}w^{-1}q w\varepsilon u_i (w\varepsilon)^{-1}q^{-1}w.$$ In order for the $q$-letters to cancel, we must have that the word $w\varepsilon u_i (w\varepsilon)^{-1}$ is trivial. But this implies $u_i$ is trivial, so our word for $x$ was not freely reduced to begin with. This gives a contradiction, so we are done.\end{proof}

Now we prove the main result of this section.

\begin{theorem}\label{conjwithoutthetas}
    Fix $n\geq 1$, let $u_1,...,u_k,v_1,...,v_k\in \langle X\rangle_{M(G)}$ be reduced words, and let  $\sigma=(\sigma_1,...,\sigma_k)$ for $\sigma_1,...,\sigma_k\in \{\pm1\}$. Suppose $\sigma_i=-\sigma_{i+1}$ implies $u_i\not\equiv 1\not\equiv v_i$. We have $$q^{\sigma_1}u_1q^{\sigma_2}u_2 \cdots q^{\sigma_k}u_k\approx q^{\sigma_1}v_1q^{\sigma_2}v_2 \cdots q^{\sigma_k}v_k$$ if and only if  $(u_1,...,u_k)\overset{\sigma}{\sim} (v_1,...,v_k)$.
\end{theorem}

\begin{proof}

Suppose $$q^{\sigma_1}u_1q^{\sigma_2}u_2 \cdots q^{\sigma_k}u_k\approx q^{\sigma_1}v_1q^{\sigma_2}v_2 \cdots q^{\sigma_k}v_k$$ via the conjugator $\gamma\in \langle X\cup\Theta\rangle_{M(G)}$. First, observe that $q^{\sigma_1}u_1q^{\sigma_2}u_2 \cdots q^{\sigma_k}u_k$ and $q^{\sigma_k}u_k\approx q^{\sigma_1}v_1q^{\sigma_2}v_2 \cdots q^{\sigma_k}v_k$ are cyclically reduced since $\langle X,q\rangle_{M(G)}$ is free and $\sigma_i=-\sigma_{i+1}$ implies $u_i\not\equiv 1\not\equiv v_i$. We may write $\gamma$ as $z\mu$, where $z\in \langle X\rangle_{M(G)}$, $\mu\in \langle \Theta\rangle_{M(G)}$. By Lemma \ref{conjbythetas} and Corollary \ref{invconjbythetas}, there exist some words $w,\varepsilon$ on $X$, with $\varepsilon=_G1$, such that $\mu q\mu^{-1}=_{M(G)} w^{-1}qw\varepsilon$ and $\mu q^{-1}\mu^{-1}=_{M(G)}(w\varepsilon)^{-1}q^{-1}w$. Also, $\mu u_i\mu^{-1}=_{M(G)}u_i$ since every element of $\langle \Theta^{\pm1}\rangle_{M(G)}$ commutes with each $u_i$. Hence, we see $$\gamma q^{\sigma_1}u_1q^{\sigma_2}u_2 \cdots q^{\sigma_k}u_k\gamma^{-1}=_{M(G)}zu_0'q^{\sigma_1}u_1'q^{\sigma_2}u_2' \cdots q^{\sigma_k}u_k'z^{-1}$$ where $u_0'$ is $w^{-1}$ if $\sigma_1=1$ and $(w\varepsilon)^{-1}$ if $\sigma_1=-1$, and the rest of the $u_i$'s are as follows. By the above, for $i=1,...,k-1$, $u_i'$ is of the following form:

\begin{itemize}
    \item If $\sigma_i=\sigma_{i+1}=1$, then $u_i'=_{M(G)}w\varepsilon u_i w^{-1}$
    \item If $\sigma_i=1,\sigma_{i+1}=-1$, then $u_i'=_{M(G)}w\varepsilon u_i \varepsilon^{-1}w^{-1}$
    \item If $\sigma_{i}=-1,\sigma_{i+1}=1$, then $u_i'=_{M(G)}wu_iw^{-1}$
    \item If $\sigma_i=-1,\sigma_{i+1}= -1$, then $u_i'=_{M(G)}wu_i\varepsilon^{-1}w^{-1}.$
\end{itemize} 
Now, by the argument in Lemma \ref{sameqsafterconjugation} (which we may apply since our words are cyclically reduced), there is no cancellation among the $q$-letters of the word $zu_0'q^{\sigma_1}u_1'q^{\sigma_2}u_2' \cdots q^{\sigma_k}u_k'z^{-1}$. Thus, since $\langle X,q\rangle_{M(G)}$ is free, the subwords between them must be pairwise equal to the corresponding subwords in the word $q^{\sigma_1}v_1q^{\sigma_2}v_2 \cdots q^{\sigma_k}v_k$. That is,  $zu_0'=_{M(G)}1$ and $u_i'=_{M(G)}v_i$ for $i=1,...,k-1$, as well as $u_k'z^{-1}=_{M(G)}v_k$. Moreover, each of these subwords is an element of $\langle X\rangle_{M(G)}$, which is free, so these equalities are free. Letting $\sigma=(\sigma_1,...,\sigma_k)$, the only thing remaining to show is that the four implications hold for $u_k$ and $v_k$. We will prove that the first implication is true, with the rest following by analogous arguments.

Recall our convention that $\sigma_{k+1}=\sigma_1$. Suppose $\sigma_k=\sigma_{k+1}=1$. Then, $\sigma_1=1$ so $u_0'$ is $w^{-1}$. Since $zu_0'\equiv 1$ by the above, $z\equiv w$. Thus, $u_k'\equiv w\varepsilon u_k w^{-1}$. We asserted above that $u_k'z^{-1}\equiv v_n$, hence $w\varepsilon u_k w^{-1}\equiv v_k$, which is the desired equality.

Now we show the converse direction. Let $(u_1,...,u_k),(v_1,...,v_k),\sigma=(\sigma_1,...,\sigma_k)$ be given as above, and suppose that $(u_1,...,u_k)\overset{\sigma}{\sim}(v_1,...,v_k)$ (this is possible because of the first biconditional). Let $w$ and $\varepsilon$ be the words given in the definition of $\overset{\sigma}{\sim}$, meaning $\varepsilon=_G1$ and \begin{itemize}
    \item If $\sigma_i=\sigma_{i+1}=1$, then $w\varepsilon u_i w^{-1}\equiv v_i$
    \item If $\sigma_i=1,\sigma_{i+1}=-1$, then $w\varepsilon u_i \varepsilon^{-1}w^{-1}\equiv v_i$
    \item If $\sigma_{i}=-1,\sigma_{i+1}=1$, then $wu_iw^{-1}\equiv v_i$
    \item If $\sigma_i=-1,\sigma_{i+1}= -1$, then $wu_i\varepsilon^{-1}w^{-1}\equiv v_i$.
\end{itemize}  
for $i=1,...,k$. If $\sigma_1=1,$ define (using Lemma \ref{wordstoconjbythetas}) $\gamma=w\tau_w\tau_\varepsilon$, and if $\sigma_1=-1$ define $\gamma=w\varepsilon\tau_w\tau_\varepsilon$. By the above relations, we compute that $$\gamma q^{\sigma_1}u_1q^{\sigma_2}u_2 \cdots q^{\sigma_k}u_k\gamma^{-1}=_{M(G)} q^{\sigma_1}v_1q^{\sigma_2}v_2 \cdots q^{\sigma_k}v_k.$$ This completes the proof.
    \end{proof}

\section{Converse Direction of Theorem A}\label{mainalgorithm}

First, a lemma.
\begin{lemma}\label{notypeC}Let $t=\max\{|r|:r\in R\}+1$. There exists a constant $C$ such that, for every conjugate pair $x\sim_{M(G)}y$ where the conjugacy is witnessed via a diagram with no $q$-corridors of type C, there exists a conjugator $g\in M(G)$ with length $|g|\leq 3t^2 C(|x|+|y|)$.
\end{lemma}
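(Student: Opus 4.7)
My plan is to take a minimum-area annular diagram $D$ witnessing $x \sim_{M(G)} y$ among those with no type C $q$-corridors, and to construct a conjugator $g$ by reading off a carefully chosen transverse meridian arc through $D$. Writing $n = |x| + |y|$, the goal is to show that the area of $D$ (and hence the length of such a meridian) is controlled linearly in $n$, which lets us peel off the conjugator as a word of length at most $3tC \cdot n$.

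The first step is to use minimality to eliminate type A $q$-corridors from $D$: such a corridor bounds a sub-disk whose two corridor-boundary words lie in $K_1$ and $K_{-1}$ and are identified under the isomorphism $\kappa$ from Subsection~\ref{stabletterqHNN}, and the cancellation moves of Figure~\ref{fig:qcor_cancellation} let me replace the disk by a $\theta$-only filling of strictly smaller area, contradicting minimality. Thus after this reduction $D$ contains only type B and type D $q$-corridors.

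Next, I would handle the remaining type D corridors. Since there are no type C corridors, any type D corridor cuts $D$ into two concentric annular sub-diagrams sharing a common boundary word in $K_{\pm 1}$, neither of which contains a type C or a ``deeper'' type D $q$-corridor. I would then induct on the number of type D corridors: peeling off the outermost one yields a sub-diagram to which the inductive hypothesis applies, and the conjugator for the original pair becomes the product of the outer sub-conjugator, a single $q^{\pm 1}$ crossing the corridor, and the inner sub-conjugator. In the base case, $D$ has only type B corridors; there the $q$-letters on each boundary pair up via a non-crossing matching, and I can build a meridian arc that detours around each type B ``pocket'' (contributing at most $3t$ letters per pocket, accounting for the relator word of length at most $t-1$ together with the two bracketing $\theta$-letters on each $q$-cell) and then cuts transversely through a region containing only $\theta$-corridors and commutation cells of the form $\theta_\alpha x = x\theta_\alpha$, each of which contributes at most a fixed constant $C$, depending only on the presentation, to the length of $g$. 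Summing these contributions yields $|g|\leq 3tC n$.

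The main obstacle I anticipate is controlling the boundary lengths of the type D corridors in the inductive step, since those lengths are not a priori bounded by $n$ but could grow with the area of $D$. My proposed fix is to argue that in the minimum-area $D$, each nested type D corridor is adjacent through a sub-annular strip containing no $q$-corridor to a unique arc of $x$ or $y$, so its boundary length (measured in $K_{\pm 1}$) is bounded by that arc's length together with a constant-factor contribution from the commutation relations across the strip. A discharging argument over the nested collection of type D corridors then shows that the sum of all their boundary lengths is linear in $n$, which combined with the base-case estimate and the bounded overhead of each $q^{\pm 1}$ in the inductive step completes the proof.
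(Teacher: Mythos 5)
Your high-level plan (eliminate type A corridors, then handle types B and D, then read off a transverse arc) tracks the paper's structure, but there are two genuine gaps.

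First, your argument for eliminating type A $q$-corridors doesn't work as stated. A type A corridor realizes an HNN pinch $q^{\pm 1}\alpha q^{\mp 1}$ in $x$ or $y$: its two corridor-boundary words in $K_1$ and $K_{-1}$ are related by $\kappa$, but you cannot ``replace the disk by a $\theta$-only filling'' while keeping the same boundary label. Removing the pinch necessarily changes the boundary word (replacing $q^{\pm 1}\alpha q^{\mp 1}$ by its $\kappa^{\mp 1}$-image), so minimality of the diagram for the \emph{given} boundary words $x,y$ gives no contradiction. The cancellation moves of Figure~\ref{fig:qcor_cancellation} only apply when two adjacent $q$-cells in a corridor have cancelling $\theta$-letters; they don't delete a whole type A corridor. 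The paper's fix is to pre-process by applying all $q$-conjugations to $x$ and $y$ (turning every pinch $q^{\pm 1}\alpha q^{\mp 1}$ into $\kappa^{\mp 1}(\alpha)$), at the bookkeeping cost of inflating $|x|+|y|$ by a factor of at most $t$; one then passes to a diagram for the modified words, which has no type A corridors.

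Second, and more importantly, your treatment of multiple type D corridors is exactly the hard point and your proposed fix doesn't close the gap. You correctly anticipate that nested type D corridors could have boundary lengths growing with the area of $D$ rather than with $n$, and you propose a discharging argument claiming each nested type D corridor is ``adjacent through a $q$-free strip to a unique arc of $x$ or $y$.'' That fails: if there are $m>2$ nested type D corridors $Q_1,\dots,Q_m$, only the outermost and innermost are adjacent to the boundary through $q$-free strips; the intermediate ones are sandwiched between other type D corridors, and the $q$-free strip between $Q_i$ and $Q_{i+1}$ is an annulus whose two boundary words need not have comparable lengths in general. The paper sidesteps the whole issue with a structural observation: every $\theta$-edge of a type D $q$-corridor lies on a $\theta$-corridor, and (by the Lemma~\ref{thcorwrap}-type argument) only type C $\theta$-corridors can cross a type D $q$-corridor, and they cross it exactly once. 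Since a type C $\theta$-corridor separates the two boundaries, it must cross \emph{every} type D $q$-corridor, and non-crossing of $\theta$-corridors forces all type D $q$-corridors to carry the same $\theta$-edge sequence, hence the same boundary words up to cyclic rotation. One can then cut out the region between two consecutive type D corridors and reglue, reducing to at most one type D corridor without any growth in boundary length. Without this merging step (or an equivalent), your induction has no way to control the intermediate corridor lengths, and the linear bound does not follow. I'd suggest replacing the discharging idea with this $\theta$-corridor rigidity argument.
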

\begin{proof}
    Suppose $x\sim_{M(G)}y$, but there exists no diagram witnessing $x\sim_{M(G)}y$ with $q$-corridors of type C. First, if there is a subword of of the form $qwq^{-1}$, where $w\in K_1$, we replace it with the corresponding word $w'$ on the generators of $K_{-1}$, and replace similarly all words of the form $q^{-1}wq$, where $w\in K_{-1}$, with a word on the generators of $K_1$. This increases the length of $x$ and $y$ by at most a factor of $t$. Now, we fix some diagram witnessing $x\sim_{M(G)}y$. Applying the $q$-conjugations eliminates all $q$-corridors of type A in this diagram, so we may assume the diagram only contains $q$-corridors of types B and D. The word along outside boundary of a corridor $Q$ of type B is an element of $\langle X\cup\Theta\rangle_{M(G)}$. Since it is also the boundary of a disk, it is trivial in $M(G)$, and hence trivial in $\langle X\cup\Theta\rangle_{M(G)}$. Thus, we can fill in this disk using just cells corresponding to the commutation relations $x\theta_\alpha=\theta_\alpha x$, thereby eliminating the $q$-corridor. Since $q$-corridors cannot cross, we have not affected any other $q$-corridors, except for those completely enclosed by $Q$, which we have also eliminated. 
    
    Let us do this for every $q$-corridor of type B. This gives us a diagram with only $q$-corridors of type D. Suppose there are two $q$-corridors of type D. Each of their cells has two $\theta_\alpha$ edges, which must be part of a $\theta$-corridor. By the same argument as in the beginning of the proof Lemma \ref{thcorwrap}, which does not use the assumption that no $\theta$'s are on the boundary of the diagram, a $\theta$-corridor cannot intersect either $q$-corridor twice. By inspection, we see that a $\theta$-corridor of types A,B, or D cannot intersect a $q$-corridor of type C precisely one time. Thus, any $\theta$-corridor intersecting one of them must be of type C, so we have following situation in Figure \ref{fig:qandtheta_crossing}.
    \begin{figure}
        \centering
        \includegraphics[scale=.35]{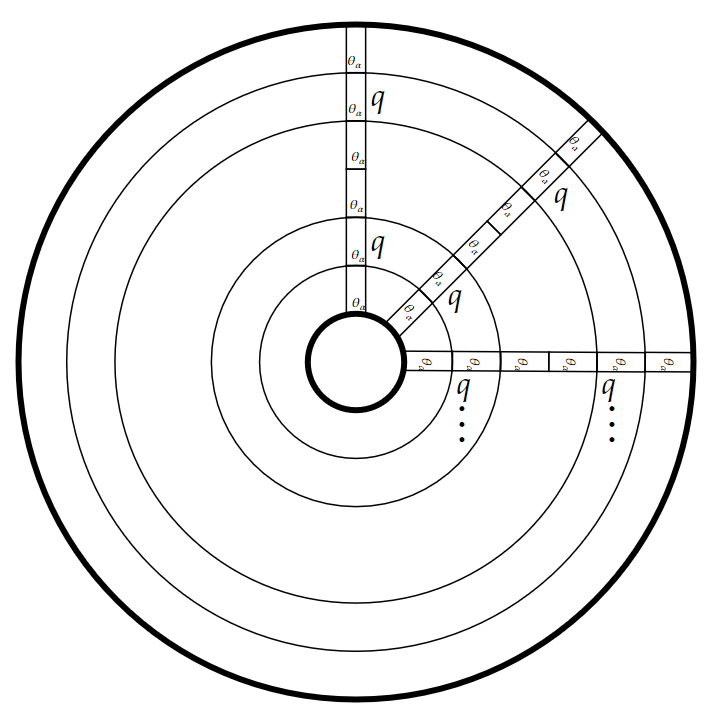}
        \caption{$\theta$-corridors crossing $q$-corridors}
        \label{fig:qandtheta_crossing}
    \end{figure}

    Since no two $\theta$-corridors can cross, this implies that the $\theta$-edges of both $q$-corridors must be the same, and in the same order. The words along the boundary of a $q$-corridor are uniquely determined by its $\theta$-edges (via $\varphi^{-1},$ as defined in Section \ref{stabletterqHNN}) hence both $q$-corridors have the same two boundary words, at least up to cyclic permutation. Both cases are depicted in Figure \ref{fig:sameq_directions}.

\begin{figure}
    \centering
$\begin{matrix}
\tikzset{every picture/.style={line width=0.75pt}} %set default line width to 0.75pt        

\begin{tikzpicture}[x=0.5pt,y=0.5pt,yscale=-.5,xscale=.5]
%uncomment if require: \path (0,681); %set diagram left start at 0, and has height of 681

%Shape: Ellipse [id:dp26198942270858505] 
\draw  [line width=3]  (50,250) .. controls (50,111.93) and (161.93,0) .. (300,0) .. controls (438.07,0) and (550,111.93) .. (550,250) .. controls (550,388.07) and (438.07,500) .. (300,500) .. controls (161.93,500) and (50,388.07) .. (50,250) -- cycle ;
%Shape: Ellipse [id:dp9146028053416602] 
\draw  [line width=3]  (264.29,250) .. controls (264.29,230.28) and (280.28,214.29) .. (300,214.29) .. controls (319.72,214.29) and (335.71,230.28) .. (335.71,250) .. controls (335.71,269.72) and (319.72,285.71) .. (300,285.71) .. controls (280.28,285.71) and (264.29,269.72) .. (264.29,250) -- cycle ;
%Shape: Ellipse [id:dp9374463889659383] 
\draw   (228.57,250) .. controls (228.57,210.55) and (260.55,178.57) .. (300,178.57) .. controls (339.45,178.57) and (371.43,210.55) .. (371.43,250) .. controls (371.43,289.45) and (339.45,321.43) .. (300,321.43) .. controls (260.55,321.43) and (228.57,289.45) .. (228.57,250) -- cycle ;
%Shape: Ellipse [id:dp452914047628036] 
\draw  [dash pattern={on 0.84pt off 2.51pt}] (192.86,250) .. controls (192.86,190.83) and (240.83,142.86) .. (300,142.86) .. controls (359.17,142.86) and (407.14,190.83) .. (407.14,250) .. controls (407.14,309.17) and (359.17,357.14) .. (300,357.14) .. controls (240.83,357.14) and (192.86,309.17) .. (192.86,250) -- cycle ;
%Shape: Ellipse [id:dp38501581880576663] 
\draw   (121.43,250) .. controls (121.43,151.38) and (201.38,71.43) .. (300,71.43) .. controls (398.62,71.43) and (478.57,151.38) .. (478.57,250) .. controls (478.57,348.62) and (398.62,428.57) .. (300,428.57) .. controls (201.38,428.57) and (121.43,348.62) .. (121.43,250) -- cycle ;
%Shape: Ellipse [id:dp39155466608362466] 
\draw  [dash pattern={on 0.84pt off 2.51pt}] (85.71,250) .. controls (85.71,131.65) and (181.65,35.71) .. (300,35.71) .. controls (418.35,35.71) and (514.29,131.65) .. (514.29,250) .. controls (514.29,368.35) and (418.35,464.29) .. (300,464.29) .. controls (181.65,464.29) and (85.71,368.35) .. (85.71,250) -- cycle ;
%Straight Lines [id:da7124905812210856] 
\draw    (300,35.71) -- (300,69.43) ;
\draw [shift={(300,71.43)}, rotate = 270] [color={rgb, 255:red, 0; green, 0; blue, 0 }  ][line width=0.75]    (10.93,-3.29) .. controls (6.95,-1.4) and (3.31,-0.3) .. (0,0) .. controls (3.31,0.3) and (6.95,1.4) .. (10.93,3.29)   ;
%Straight Lines [id:da5142515261456648] 
\draw    (300,144.29) -- (300,176.57) ;
\draw [shift={(300,178.57)}, rotate = 270] [color={rgb, 255:red, 0; green, 0; blue, 0 }  ][line width=0.75]    (10.93,-3.29) .. controls (6.95,-1.4) and (3.31,-0.3) .. (0,0) .. controls (3.31,0.3) and (6.95,1.4) .. (10.93,3.29)   ;

% Text Node
\draw (307,42.4) node [anchor=north west][inner sep=0.75pt]    {$q$};
% Text Node
\draw (307,152.4) node [anchor=north west][inner sep=0.75pt]    {$q$};
\end{tikzpicture}
&&
\tikzset{every picture/.style={line width=0.75pt}} %set default line width to 0.75pt        

\begin{tikzpicture}[x=0.5pt,y=0.5pt,yscale=-.5,xscale=.5]
%uncomment if require: \path (0,681); %set diagram left start at 0, and has height of 681

%Shape: Ellipse [id:dp26198942270858505] 
\draw  [line width=3]  (50,250) .. controls (50,111.93) and (161.93,0) .. (300,0) .. controls (438.07,0) and (550,111.93) .. (550,250) .. controls (550,388.07) and (438.07,500) .. (300,500) .. controls (161.93,500) and (50,388.07) .. (50,250) -- cycle ;
%Shape: Ellipse [id:dp9146028053416602] 
\draw  [line width=3]  (264.29,250) .. controls (264.29,230.28) and (280.28,214.29) .. (300,214.29) .. controls (319.72,214.29) and (335.71,230.28) .. (335.71,250) .. controls (335.71,269.72) and (319.72,285.71) .. (300,285.71) .. controls (280.28,285.71) and (264.29,269.72) .. (264.29,250) -- cycle ;
%Shape: Ellipse [id:dp9374463889659383] 
\draw  [dash pattern={on 0.84pt off 2.51pt}] (228.57,250) .. controls (228.57,210.55) and (260.55,178.57) .. (300,178.57) .. controls (339.45,178.57) and (371.43,210.55) .. (371.43,250) .. controls (371.43,289.45) and (339.45,321.43) .. (300,321.43) .. controls (260.55,321.43) and (228.57,289.45) .. (228.57,250) -- cycle ;
%Shape: Ellipse [id:dp452914047628036] 
\draw   (192.86,250) .. controls (192.86,190.83) and (240.83,142.86) .. (300,142.86) .. controls (359.17,142.86) and (407.14,190.83) .. (407.14,250) .. controls (407.14,309.17) and (359.17,357.14) .. (300,357.14) .. controls (240.83,357.14) and (192.86,309.17) .. (192.86,250) -- cycle ;
%Shape: Ellipse [id:dp38501581880576663] 
\draw   (121.43,250) .. controls (121.43,151.38) and (201.38,71.43) .. (300,71.43) .. controls (398.62,71.43) and (478.57,151.38) .. (478.57,250) .. controls (478.57,348.62) and (398.62,428.57) .. (300,428.57) .. controls (201.38,428.57) and (121.43,348.62) .. (121.43,250) -- cycle ;
%Shape: Ellipse [id:dp39155466608362466] 
\draw  [dash pattern={on 0.84pt off 2.51pt}] (85.71,250) .. controls (85.71,131.65) and (181.65,35.71) .. (300,35.71) .. controls (418.35,35.71) and (514.29,131.65) .. (514.29,250) .. controls (514.29,368.35) and (418.35,464.29) .. (300,464.29) .. controls (181.65,464.29) and (85.71,368.35) .. (85.71,250) -- cycle ;
%Straight Lines [id:da7124905812210856] 
\draw    (300,34.29) -- (300,68) ;
\draw [shift={(300,70)}, rotate = 270] [color={rgb, 255:red, 0; green, 0; blue, 0 }  ][line width=0.75]    (10.93,-3.29) .. controls (6.95,-1.4) and (3.31,-0.3) .. (0,0) .. controls (3.31,0.3) and (6.95,1.4) .. (10.93,3.29)   ;
%Straight Lines [id:da5142515261456648] 
\draw    (300,178.57) -- (300,144.86) ;
\draw [shift={(300,142.86)}, rotate = 90] [color={rgb, 255:red, 0; green, 0; blue, 0 }  ][line width=0.75]    (10.93,-3.29) .. controls (6.95,-1.4) and (3.31,-0.3) .. (0,0) .. controls (3.31,0.3) and (6.95,1.4) .. (10.93,3.29)   ;

% Text Node
\draw (307,42.4) node [anchor=north west][inner sep=0.75pt]    {$q$};
% Text Node
\draw (307,152.4) node [anchor=north west][inner sep=0.75pt]    {$q$};

\end{tikzpicture}
\end{matrix}$
    \caption{$q$-corridors of type D with $q$-edges facing in the same and opposite direction}
    \label{fig:sameq_directions}
\end{figure}
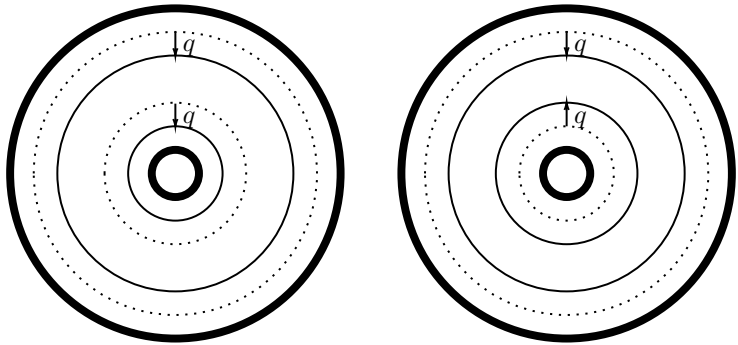

    Because the words along the dotted edges are the same, we can remove the part of the diagram between them, and glue along the dotted edges, thereby obtaining a new annular diagram witnessing $x\sim_{M(G)}y.$ This reduces the number of type D $q$-corridors by at least one. Let us perform this procedure as many times as possible, so that there is at most one $q$-corridor of type D.

     The sum of the lengths of the boundary components here is at most $t(|x|+|y|)$, since eliminating type A $q$-corridors increases length by a factor of $t$, and all of our other manipulations have no effect on the boundary. We conclude this proof by checking two cases.
    \paragraph{Case 1.} Suppose, after applying all the above to the diagram, there is no $q$-corridor of type D. Then, there are no $q$-cells at all on the diagram, hence $x$ and $y$ are elements of $\langle X\cup \Theta\rangle_{M(G)}$, and are conjugate via an element of $\langle X\cup \Theta\rangle_{M(G)}$ as well. This group is a direct product of the free groups $\langle X\rangle_{M(G)}$ and $\langle \Theta\rangle_{M(G)}$ and therefore has linear conjugator length function. That is, there exists a constant $C$ such that, for any  $x$ and $y$ that are conjugate in $\langle X\cup \Theta\rangle_{M(G)}$, there is a conjugator $h\in \langle X\cup \Theta\rangle_{M(G)}$ whose length in $\langle X\cup \Theta\rangle_{M(G)}$ is at most $ Ct(|x|+|y|)$.  Since every generator of $\langle X\cup \Theta\rangle_{M(G)}$ is a generator of $M(G)$, the length of $h$ in $\langle X\cup \Theta\rangle_{M(G)}$ is at least $|h|$, the length of $h$ in $M(G).$ Therefore, we have shown that there is a conjugator in $M(G)$ taking $x$ to $y$ with length at most $Ct(|x|+|y|)<3t^2 C(|x|+|y|)$.

    \paragraph{Case 2.} Suppose there is one $q$-corridor $Q$ of type D. By inspection of the cells, the lengths of the words along both boundaries of a $q$-corridor of type D are bounded above by $ts$, where $s$ is the number of $\theta$-edges (which is the same on both boundaries). Since these $\theta$-edges lie on corridors of type C, $s\leq \min\{|x|,|y|\}$. Now, the word $w_1$ on the interior boundary of $Q$ is an element of $\langle X\cup \Theta\rangle_{M(G)}$, and is conjugate to the word along the interior of the entire diagram, which we can assume without loss of generality is $x$. The only $q$-edges in the diagram are edges of $Q$, since $Q$ is the only $q$-corridor, so $w_1$ and $x$ are conjugate in $\langle X\cup \Theta\rangle_{M(G)}.$ By the same argument as Case 1, there exists a conjugator $h_1\in M(G)$ taking $x$ to $w_1$ with $$|h_1|\leq Ct(|x|+|w_1|)\leq Ct(|x|+ts)\leq Ct(|x|+t |y|)\leq  t^2C(|x|+|y|).$$ Similarly, the outer word $w_2$ of $Q$ is conjugate to $y$ via a conjugator $h_2$ of length $$|h_2|\leq Ct(|y|+|w_2|)\leq Ct(|y|+t |x|)\leq t^2 C(|x|+|y|).$$ Since $w_1$ and $w_2$ are words along the same side of a $q$-corridor, they are conjugate by $q^{\pm1}$. Thus, $x$ and $y$ are conjugate by $h_2q^{\pm1}h_1$, which has length at most $2t^2 C(|x|+|y|)+1\leq 3t^2 C(|x|+|y|)$.
    \end{proof}

\begin{remark}\label{conjviaCcorridor}
    If $x$ and $y$ are conjugate via a diagram with no $q$-corridors of type C, then this lemma gives an upper bound for $c(x,y)$. If they are conjugate via a diagram \textit{with} such a corridor, then reading off the word along that corridor gives a conjugator in $\langle X\cup\Theta\rangle_{M(G)}$ taking a cyclic permutation $x'$ of $x$ to a cyclic permutation $y'$ of $y$. In other words, there exist cyclic permutations $x'\sim_{M(G)}x$ and $y'\sim_{M(G)}y$ such that $x'\approx y'$.
\end{remark}

For the final ingredients of our algorithm, we borrow some definitions from \cite{BMR}. Namely, to each $x\in M(G)$ the authors associate a $c(x)\sim_{M(G)}x$, which they call \textit{cyclically reduced} and which is computable from $x$. If $c(x)\not\in \langle X,q\rangle_{M(G)}$, they then call $c(x)$ \textit{weakly regular}. Note that $\langle X,q\rangle_{M(G)}$ is a normal subgroup of $M(G)$, so if $c(x)$ is weakly regular and $c(y)$ is not, then $c(x)\not\sim_{M(G)}c(y)$, and hence $x\not\sim_{M(G)}y$.

Now we give our algorithm, assuming the existence of an algorithm deciding $\textbf{u}\overset{\sigma}{\sim}\textbf{v}$. Note that a solution to the word problem is given in \cite{Miller71}, which we will use repeatedly in our algorithm. We will also assume that we can always put an element in the form $\alpha\tau$, for $\alpha\in \langle X,q\rangle_{M(G)}$ and $\tau\in \langle \Theta\rangle_{M(G)}$, using polynomial space.

\paragraph{Algorithm:}We are given some $x,y\in M(G)$. We may assume they are already in cyclically reduced form.

\begin{enumerate}
\item Iterate over all $\gamma$ with $|\gamma|\leq 3Ct^2(|x|+|y|)$ and check if $\gamma x\gamma^{-1}=_{M(G)}y$. If such a $\gamma$ is found, conclude $x\sim_{M(G)}y$.
\item Otherwise, check if $x$ and $y$ are weakly regular. \item If only one is, conclude $x\not\sim_{M(G)}y$. 

\item If they both are weakly regular, use \cite[Theorem 4.9]{BMR} to check $x\sim_{M(G)}~y$ in cubic time.
\item Lastly, if both are not weakly regular, they have no $\theta$-letters. So, for every cyclic permutation $x'$ and $y'$ of $x$ and $y$ respectively, write $x'=u_0q^{\sigma_1}u_1q^{\sigma_1}u_2...q^{\sigma_k}w_k$, $y'=v_0q^{\varepsilon_1}v_1q^{\varepsilon_1}v_2...q^{\varepsilon_m}v_m$ and freely reduce both words.
\begin{enumerate}
\item If $m\neq k$ or $\varepsilon_i\neq \sigma_i$ for some $i\leq m=k$, conclude $x'\not\approx y'$ by Lemma \ref{sameqsafterconjugation}. \item Otherwise, compute the tuples $\textbf{u}=(u_1,...,u_ku_0),\textbf{v}=(v_1,...,v_kv_0),\sigma=(\sigma_1,...,\sigma_k)$. If $\textbf{u}\overset{\sigma}{\sim}\textbf{v}$, conclude $x'\approx y'$ by Theorem \ref{conjwithoutthetas} (both $x'$ and $y'$ are reduced, so the hypotheses of this theorem are satisfied). Chaining these relations together gives $x\sim_{M(G)}x'\approx y'\sim_{M(G)}y$, hence conclude $x\sim_{M(G)}y$.
\end{enumerate}
\item If all cyclic permutations fail, conclude $x\not\sim_{M(G)}y$.

\end{enumerate}

\begin{proof}[Proof of Correctness]
If $x$ and $y$ are conjugate via a diagram with no $q$-corridors of type C, the first bullet point will find their conjugator by Lemma \ref{notypeC}. If they are conjugate by a diagram \textit{with} such a $q$-corridor, we see that either they are both weakly regular, or neither are. In the first case, \cite[Theorem 4.9]{BMR} gives an algorithm to determine if $x\sim_{M(G)}y$.

In the second case, by Remark \ref{conjviaCcorridor} there exist cyclic conjugates $x'$ and $y'$ such that $x'\approx y'$. Theorem \ref{conjwithoutthetas} implies that $u_0^{-1}x'u_0\approx v_0^{-1}y'v_0$ if and only if $\textbf{u}\overset{\sigma}{\sim}\textbf{v}$. Since $x'\approx u_0^{-1}x'u_0$, $ v_0^{-1}y'v_0 \approx y'$ this implies $x'\approx y'$ if and only if $\textbf{u}\overset{\sigma}{\sim}\textbf{v}$. Combining this with Lemma \ref{sameqsafterconjugation} completes our proof.
\end{proof}

\begin{corollary}
    If $G$ is a finite group, then the conjugacy problem for $M(G)$ is in $\mathsf{PSPACE}$.
\end{corollary}
\begin{proof}
    Step (1.)\ can be decided in polynomal-space, reusing the same space for each iteration. Steps (2.)\ through (4.)\ likewise use only polynomail space. Note that step (5.)\ is the only remaining step containing any computation, that it iterates only polynomially-many times, and that a cyclic permutation of a word can be computed in linear space. Step (5a.)\ merely reads words of polynomial (indeed linear) length and compares many integers of polynomial size, and hence takes polynomial space. For Step (5b.)\ the tuples $\textbf{u}$, $\textbf{v}$, and $\sigma$ can likewise be computed in polynomial space. Lastly, deciding if $\textbf{u}\overset{\sigma}{\sim}\textbf{v}$ is equivalent to solving a system of equations in the variables $w$ and $\varepsilon$, with the constraint that $\varepsilon\in \langle \langle R\rangle\rangle$. If $G$ is finite, then $\langle \langle R\rangle\rangle$ is finite index in $F(X)$ and therefore finitely generated. Solving a system of equations with such a constraint is given as Problem 9.25 of the Kurovka notebook \cite{mazurov2023unsolved}, which was both solved and shown to be in $\mathsf{PSPACE}$ in \cite{rational_constraints}. This completes our proof.
\end{proof}

\section{Proof of Theorem B}\label{WordprobG}
To begin this section, we prove two useful lemmas.

\begin{lemma}
Let $t=\max(\{|r|:r\in R\}\cup\{2\})$. Let $x=q^{\sigma_1}u_1q^{\sigma_2}u_2 \cdots q^{\sigma_k}u_k\rho$ and $y=q^{\sigma_1}v_1q^{\sigma_2}v_2 \cdots q^{\sigma_k}v_k\tau$ be reduced words (not necessarily reduced elements), where $u_i,v_i$ are words over $X^{\pm1}$, $\tau,\rho\in \langle \Theta\rangle_{M(G)}$, and $n\geq 1.$ Suppose $x\sim_{M(G)}y$. If the diagram witnessing a minimal length conjugator has no $q$-corridors of type A, and no $\theta$-corridors of types A or C, then there exists a cyclic conjugate $y'$ of $y$ such that  $c'(x,y')\leq t c(x,y)$. 
\end{lemma}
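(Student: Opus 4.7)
The plan is to turn one side of a type-C $q$-corridor in the minimal diagram into a conjugator from $x$ to a cyclic conjugate of $y$ lying in $\langle X \cup \Theta\rangle_{M(G)}$, and to bound its length via Lemma \ref{thcorwrap}. First I would observe that the hypothesis ``no $\theta$-corridors of type A or C'' forces $\rho=\tau=1$: any $\theta$-letter on the annular boundary must lie at the endpoint of a $\theta$-corridor touching that boundary, and such a corridor can only be of type A or C. Combined with ``no $q$-corridors of type A,'' every boundary $q$-letter is then the outer endpoint of a type-C $q$-corridor. Let $Q$ be the one emanating from the edge labelled $q^{\sigma_1}$; by the cancellation procedure of Figure \ref{fig:qcor_cancellation}, we may further assume the boundary words of $Q$ are reduced in $K_{\pm 1}$, which does not change $c(x,y)$.

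Next I would apply Lemma \ref{thcorwrap} to $Q$: since the annular boundary has no $\theta$-letters and $Q$'s boundary is reduced, every $\theta$-edge on the boundary of $Q$ lies in a type-D $\theta$-corridor meeting $Q$ exactly once. If $Q$ has $N$ cells, this gives $N$ distinct type-D $\theta$-corridors (each cell contributes one, since both of its $\theta$-edges lie in the same $\theta$-corridor through it). Each is a homotopically essential loop around the inner hole, so each must cross the minimal conjugator arc $\gamma$. A careful edge-count --- using that each edge of $\gamma$ lies on the boundary of at most two $\theta$-corridors --- converts these crossings into a linear bound relating $N$ and $c(x,y)$.

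Finally, let $s$ be the path along the side of $Q$ issuing from the basepoint vertex of $x$. Its label is an element of $\langle X \cup \Theta\rangle_{M(G)}$ because the sides of a $q$-corridor carry no $q$-letters; taking $y'$ to be the cyclic conjugate of $y$ obtained by reading the inner boundary starting from the terminal vertex of $s$, the path $s$ directly witnesses $s^{-1}xs =_{M(G)} y'$, giving $c'(x, y') \le |s|$. Bounding $|s|$ cell-by-cell (each cell contributes at most $t+1$ letters to one side, attained by the ``$r\theta_r$'' side of a type-3 cell) and combining with the bound on $N$ yields $c'(x, y') \le t\cdot c(x, y)$, where the hypothesis $t \ge 2$ absorbs the off-by-one. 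The hardest step is the edge-count: turning the purely topological statement that each of the $N$ type-D corridors crosses $\gamma$ into a bound on $N$ sharp enough that, together with the $(t+1)$-per-cell estimate on $|s|$, the overall constant collapses to $t$.
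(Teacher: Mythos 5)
Your strategy matches the paper's, but the accounting is off in a way that would sink the stated constant. You read the $K_1$ side of the $q$-corridor $Q$ (the ``$r\theta_r$'' side), so each type-3 cell contributes up to $1+|r|\le t+1$ letters to $\delta$. Combined with $N\le c(x,y)$, this gives only $|\delta|\le (t+1)\,c(x,y)$; there is no ``off-by-one'' here for $t\ge 2$ to absorb, since $(t+1)/t>1$ is a genuine multiplicative excess. The paper reads the \emph{other} side of $Q$, the $K_{-1}$ side, whose generators $\theta_x x$ and $\theta_r$ have length at most $2$; this gives $|\delta|\le 2\,\numtheta(\delta)\le t\,\numtheta(\delta)$ directly because $t\ge 2$, and $\numtheta(\delta)\le c(x,y)$ then closes the argument. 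So you should read the $K_{-1}$ side, not the $K_1$ side.

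Separately, the step you flag as hardest is actually straightforward and does not require worrying about how many corridors an edge can bound. By Lemma~\ref{thcorwrap}, each $\theta$-edge on a boundary of $Q$ lies in a type-D $\theta$-corridor meeting $Q$ exactly once, so distinct $\theta$-edges on that boundary lie in \emph{distinct} type-D corridors. Each such corridor separates the two boundary circles of the annulus, so the conjugator arc $h$ must cross it; the only edges connecting the two sides of a $\theta$-corridor are its $\theta$-edges, and each $\theta$-edge belongs to exactly one $\theta$-corridor, so each crossing yields a distinct $\theta$-edge on $h$. Hence $\numtheta(\delta)\le |h|=c(x,y)$ with no further counting argument needed. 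Your preliminary observation that $\rho=\tau=1$ is correct and is implicitly used (it is what permits the Lemma~\ref{thcorwrap} hypotheses to hold), so that part is fine.
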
 
\begin{proof}For any word $\alpha$ on the generators of $M(G),$ let $\#_\theta(\alpha)$ be the number of $\theta$-letters in $\alpha.$ Thus $\#_\theta(\theta_x x \theta_x)=2$, $\#_\theta(\theta_r^{-1} x)=1,$ $\#_\theta(\theta_x\theta^{-1}_x\theta_r)=3,$ and so on.
    
    Now, let $h$ be a minimal length conjugator taking $x$ to $y$, so $|h|=c(x,y)$. If we draw an annular diagram for $x$ and $y$, we know there is a $q$-corridor going from the start of $x$ to some point in $y$. Let $y'$ be the cyclic conjugate of $y$ that begins at this point. On the side of the boundary that starts at the same point of $h$, the word $\delta$ is an element of $K_{\pm1},$ and an examination of $K_{\pm1}$'s generators shows $|\delta|\leq t\#_\theta(\delta).$ Moreover, $\delta$ conjugates $x$ to $y'$, so $c'(x,y')\leq |\delta|$. The situation described is depicted in Figure \ref{fig:conjgator_vs_corridor}.

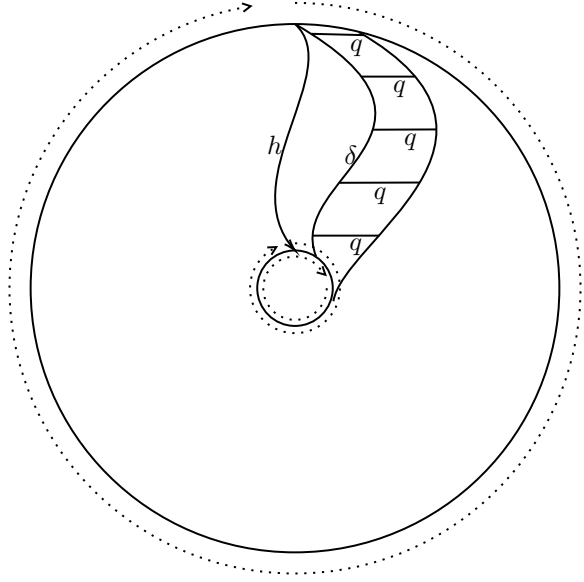
\begin{figure}
    \centering
\tikzset{every picture/.style={line width=0.75pt}} %set default line width to 0.75pt        

\begin{tikzpicture}[x=0.4pt,y=0.4pt,yscale=-1,xscale=1]
%uncomment if require: \path (0,681); %set diagram left start at 0, and has height of 681

%Shape: Ellipse [id:dp26198942270858505] 
\draw  (50,350) .. controls (50,211.93) and (161.93,100) .. (300,100) .. controls (438.07,100) and (550,211.93) .. (550,350) .. controls (550,488.07) and (438.07,600) .. (300,600) .. controls (161.93,600) and (50,488.07) .. (50,350) -- cycle ;
%Shape: Ellipse [id:dp9146028053416602] 
\draw  (264.29,350) .. controls (264.29,330.28) and (280.28,314.29) .. (300,314.29) .. controls (319.72,314.29) and (335.71,330.28) .. (335.71,350) .. controls (335.71,369.72) and (319.72,385.71) .. (300,385.71) .. controls (280.28,385.71) and (264.29,369.72) .. (264.29,350) -- cycle ;
%Curve Lines [id:da1920704249921188] 
\draw    (300,100) .. controls (348.09,143.45) and (240.75,247.29) .. (299.11,313.29) ;
\draw [shift={(300,314.29)}, rotate = 227.55] [color={rgb, 255:red, 0; green, 0; blue, 0 }  ][line width=0.75]    (10.93,-3.29) .. controls (6.95,-1.4) and (3.31,-0.3) .. (0,0) .. controls (3.31,0.3) and (6.95,1.4) .. (10.93,3.29)   ;
%Curve Lines [id:da49310721521760725] 
\draw    (300,100) .. controls (474.33,201) and (289.67,245.67) .. (320,320) ;
%Curve Lines [id:da6096292208247691] 
\draw    (360.33,107.67) .. controls (534.67,208.67) and (345,305.67) .. (336.33,362.33) ;
%Straight Lines [id:da5222273066685579] 
\draw    (315.5,109.25) -- (365,110.5) ;
%Straight Lines [id:da6869953815177208] 
\draw    (363,149.75) -- (413.75,149.75) ;
%Straight Lines [id:da6813126837811365] 
\draw    (373.25,200) -- (433.75,199.75) ;
%Straight Lines [id:da19287102659453703] 
\draw    (342,250.25) -- (418,250) ;
%Straight Lines [id:da7558031954025244] 
\draw    (316.25,300.25) -- (379.75,300.25) ;
%Shape: Arc [id:dp15491884352751595] 
\draw  [draw opacity=0][dash pattern={on 0.84pt off 2.51pt}] (330,350) .. controls (330,350) and (330,350) .. (330,350) .. controls (330,366.57) and (316.57,380) .. (300,380) .. controls (283.43,380) and (270,366.57) .. (270,350) .. controls (270,333.43) and (283.43,320) .. (300,320) .. controls (311.1,320) and (320.8,326.03) .. (325.99,335) -- (300,350) -- cycle ; \draw  [dash pattern={on 0.84pt off 2.51pt}] (330,350) .. controls (330,350) and (330,350) .. (330,350) .. controls (330,366.57) and (316.57,380) .. (300,380) .. controls (283.43,380) and (270,366.57) .. (270,350) .. controls (270,333.43) and (283.43,320) .. (300,320) .. controls (311.1,320) and (320.8,326.03) .. (325.99,335) ;  
\draw   (326,329.24) -- (328.75,337.37) -- (320.51,334.99) ;
%Shape: Arc [id:dp6432792247160635] 
\draw  [draw opacity=0][dash pattern={on 0.84pt off 2.51pt}] (300,307.5) .. controls (323.47,307.5) and (342.5,326.53) .. (342.5,350) .. controls (342.5,373.47) and (323.47,392.5) .. (300,392.5) .. controls (276.53,392.5) and (257.5,373.47) .. (257.5,350) .. controls (257.5,334.27) and (266.05,320.54) .. (278.75,313.19) -- (300,350) -- cycle ; \draw  [dash pattern={on 0.84pt off 2.51pt}] (300,307.5) .. controls (323.47,307.5) and (342.5,326.53) .. (342.5,350) .. controls (342.5,373.47) and (323.47,392.5) .. (300,392.5) .. controls (276.53,392.5) and (257.5,373.47) .. (257.5,350) .. controls (257.5,334.27) and (266.05,320.54) .. (278.75,313.19) ;  
\draw   (273.73,311.43) -- (282.31,311.06) -- (277.71,318.31) ;
%Shape: Arc [id:dp1669557449710528] 
\draw  [draw opacity=0][dash pattern={on 0.84pt off 2.51pt}] (300,80) .. controls (300,80) and (300,80) .. (300,80) .. controls (449.12,80) and (570,200.88) .. (570,350) .. controls (570,499.12) and (449.12,620) .. (300,620) .. controls (150.88,620) and (30,499.12) .. (30,350) .. controls (30,216.88) and (126.34,106.26) .. (253.11,84.06) -- (300,350) -- cycle ; \draw  [dash pattern={on 0.84pt off 2.51pt}] (300,80) .. controls (300,80) and (300,80) .. (300,80) .. controls (449.12,80) and (570,200.88) .. (570,350) .. controls (570,499.12) and (449.12,620) .. (300,620) .. controls (150.88,620) and (30,499.12) .. (30,350) .. controls (30,216.88) and (126.34,106.26) .. (253.11,84.06) ;  
\draw   (249.88,80.51) -- (258.06,83.1) -- (251.26,88.34) ;

% Text Node
\draw (271,202.4) node [anchor=north west][inner sep=0.75pt]    {$h$};
% Text Node
\draw (344,212) node [anchor=north west][inner sep=0.75pt]    {$\delta $};
% Text Node
\draw (349.2,111.6) node [anchor=north west][inner sep=0.75pt]    {$q$};
% Text Node
\draw (390,150.8) node [anchor=north west][inner sep=0.75pt]    {$q$};
% Text Node
\draw (400.4,201.2) node [anchor=north west][inner sep=0.75pt]    {$q$};
% Text Node
\draw (371.6,251.6) node [anchor=north west][inner sep=0.75pt]    {$q$};
% Text Node
\draw (348.8,300.4) node [anchor=north west][inner sep=0.75pt]    {$q$};

\end{tikzpicture}

    \caption{Two conjugators, one of which is along a $q$-corridor}
    \label{fig:conjgator_vs_corridor}
\end{figure}

    Here, the innermost dotted loop represents the word representing $y'$, the middle one the word representing $y$ (note they are both read off along the same cycle of the diagram, and hence are cyclic conjugates), and the outer one the word representing $x$. The words $h$ and $\delta$ are also drawn.

Next we show $\#_\theta(\delta)\leq |h|$. Firstly, we assume that $\delta$ is reduced in $K_{\pm1},$ since otherwise we can shorten the $q$-spoke without affecting $|h|,$ as shown in Section \ref{Diag}. Then, every $\theta$-letter of $\delta$ must be in a $\theta$-corridor of type D. Our conjugator $h$ must cross each of these $\theta$-corridors at least once, and each crossing adds one to the length of $h$. Hence, $\#_\theta(\delta)\leq |h|$  which gives $$c'(x,y')\leq |\delta|\leq t\#_\theta(\delta),$$ as desired.
\end{proof}
\begin{lemma}\label{DandDprimerelationship} Let $t=\max(\{|r|:r\in R\}\cup\{2\})$. If $\sigma$ is any $k$-tuple over $\{\pm1\}$, then $$\frac{D_{k,\sigma}'(n)}{t}\leq D_{k,\sigma}(n)\leq D_{k,\sigma}'(n)$$ for all $n$. Also, $$\frac{D_0'(n)}{t}\leq D_0(n)\leq D_0'(n).$$
\end{lemma}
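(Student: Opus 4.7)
The plan is to establish the two inequalities separately, with the upper bound being essentially immediate and the lower bound invoking the preceding lemma.

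For the upper bound $D_{k,\sigma}(n) \leq D'_{k,\sigma}(n)$, the key observation is that $\langle X\cup\Theta\rangle_{M(G)}\subseteq M(G)$, so every $\approx$-conjugator is in particular a $\sim_{M(G)}$-conjugator of the same length, giving $c(x,y)\leq c'(x,y)$ on every $\approx$-pair of the prescribed special form. Passing to the maximum over such pairs with $\sum_i\|u_i\|+\|v_i\|\leq n-2k$ yields the bound. The same remark applies verbatim to the $(qu,q)$ case, giving $D_0(n)\leq D_0'(n)$.

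For the lower bound $D'_{k,\sigma}(n)/t\leq D_{k,\sigma}(n)$, I would fix a pair $(x,y)$ of special form realizing $c'(x,y)=D'_{k,\sigma}(n)$. Since $x\approx y$ in particular forces $x\sim_{M(G)}y$, I choose a minimum length $\sim$-conjugator $h$, so $|h|=c(x,y)$, and consider the associated annular diagram. Using the reduction procedures detailed in Section \ref{Diag} I would preprocess this diagram to eliminate any type A $q$-corridors and type A or C $\theta$-corridors without increasing $|h|$, placing us in the hypothesis of the preceding lemma. That lemma then supplies a cyclic conjugate $y'$ of $y$ and a conjugator $\delta\in\langle X\cup\Theta\rangle_{M(G)}$ with $\delta x\delta^{-1}=_{M(G)}y'$ and $|\delta|\leq t\,c(x,y)$. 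The remaining step is to identify $y'$ with $y$: because both words of the pair share the initial prefix $q^{\sigma_1}$ and we know a priori that $x\approx y$, we can choose the minimum length diagram so that the $q$-corridor emanating from the initial $q$-letter of $x$ terminates at the initial $q$-letter of $y$, forcing $y'=y$. This gives $c'(x,y)\leq|\delta|\leq t\,c(x,y)\leq t\,D_{k,\sigma}(n)$, and hence $D'_{k,\sigma}(n)\leq t\,D_{k,\sigma}(n)$.

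The $D_0$ case is essentially the same argument but simpler: since $y=q$ contains a single $q$-letter, the $q$-corridor extracted from the preceding lemma is aligned automatically, and the identification $y'=y$ requires no additional work.

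The main obstacle is making the identification $y'=y$ precise in the general case. It amounts to verifying that among all minimum length $\sim_{M(G)}$-conjugators for $(x,y)$ there is one whose associated diagram has the $q$-corridor at the initial $q$ of $x$ landing at the initial $q$ of $y$. This is natural for $\approx$-pairs of special form, since an $\approx$-conjugator itself produces such an aligned diagram, but one still has to argue that shortening to a minimum length conjugator can be done without losing this alignment, or else absorb a small multiplicative constant into $t$ to accommodate the discrepancy.
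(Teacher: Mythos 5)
There are two genuine gaps, one in each direction. For the upper bound, your ``immediate'' argument does not establish $D_{k,\sigma}(n)\leq D'_{k,\sigma}(n)$, because the two maxima do not range over the same pairs: $D_{k,\sigma}$ is a maximum of $c(x,y)$ over special-form pairs that are merely conjugate in $M(G)$, while $D'_{k,\sigma}$ is a maximum of $c'(x,y)$ over pairs with $x\approx y$. These domains genuinely differ: for distinct generators $a,b$ the reduced pair $x=qaqb$, $y=qbqa$ is conjugate in $M(G)$ (by the cyclic permutation $qa\in\langle X,q\rangle_{M(G)}$), but by Theorem \ref{conjwithoutthetas} we would need $w,\varepsilon$ with $w\varepsilon aw^{-1}\equiv b$ and $w\varepsilon bw^{-1}\equiv a$, which is impossible by comparing exponent sums of $a$; so $x\not\approx y$. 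For such a pair your pointwise inequality $c\leq c'$ says nothing, so passing to maxima over $\approx$-pairs cannot bound $D_{k,\sigma}(n)$. This is precisely where the paper's proof does its work: it shows that for a $\sim_{M(G)}$-conjugate special pair the witnessing diagram has only type C $q$-corridors (type A corridors are excluded because the reduced boundary words are $\theta$-free, not by a ``preprocessing'' step, which as you state it is unjustified), deduces $x\approx y'$ for a cyclic rearrangement $y'$ of $y$ carrying the same $\sigma$-pattern, and then pays for realigning $y'$ with $y$, arriving at $c(x,y)\leq c'(x,y')+n$; the realignment cost disappears only in the $D_0$ case, where $y'=y$ automatically.

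For the lower bound you have correctly located the obstruction --- identifying $y'$ with $y$ --- but neither of your proposed repairs works. You cannot ``choose'' the diagram of a minimum-length conjugator so that the corridor at the initial $q$ of $x$ lands at the initial $q$ of $y$: the diagram is dictated by the minimal conjugator, and insisting on alignment may force a longer conjugator, destroying the inequality $|\delta|\leq t\,c(x,y)$ that you need. Nor can the mismatch be absorbed into a constant: the element conjugating $y'$ back to $y$ is a prefix of $y$ containing $q$-letters, hence lies outside $\langle X\cup\Theta\rangle_{M(G)}$, so concatenating it with $\delta$ does not yield an $\approx$-conjugator from $x$ to $y$ at all, and consequently gives no bound on $c'(x,y)$, which is the quantity defining $D'_{k,\sigma}(n)$. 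The paper addresses this point by reading off from the diagram that the cyclic conjugate produced has the same $q$-letters in the same order as $x$ with the intervening $X$-subwords those of $y$ (and, in the $D_0$ case, that $y'=y$ outright since there is a single $q$-corridor); your proposal, by contrast, leaves the identification as an acknowledged hole, so the inequality $D'_{k,\sigma}(n)/t\leq D_{k,\sigma}(n)$ is not established for general $k$ and $\sigma$.
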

\begin{proof}
Let $\sigma=(\sigma_1,...,\sigma_k)$ be any $k$-tuple of $\{\pm1\}$, and $$x=q^{\sigma_1}u_1q^{\sigma_2}...q^{\sigma_k}u_k,$$ $$y=q^{\sigma_1}v_1q^{\sigma_2}...q^{\sigma_k}v_k$$ be such that $x\sim_{M(G)}y$ and $\sum_{i}||u_i||+||v_i||\leq n-2k$.  Since $x$ and $y$ have trivial $\theta$-factors, there are no $q$-corridors of type A in a diagram witnessing this conjugacy. Hence, every $q$-corridor in such a diagram is of type C. Thus, because $q$-corridors cannot cross, $q$-corridors of type C give a bijection between the $q$-letters on each boundary component. In particular, this bijection preserves order up to cyclic permutation. Since there are also no $\theta$-corridors of type A or C (since $x$ and $y$ contain no $\theta$-letters), the previous lemma implies $x\approx y'$ for some cyclic conjugate $y'$ of $y$. However, by inspection of the diagram in the lemma we see that $y'$ has the same $q$ letters in the same order as $x$, and that the subwords between the $q$-letters are the same as those of $y$, hence $y'=_{M(G)}y=q^{\sigma_1}v_{1+j}q^{\sigma_2}...q^{\sigma_k}v_{k+j}$ for some $j$. Inspection of the diagram witnessing $c'(x,y')$ shows $$c(x,y)\leq c'(x,y')+\sum_{i}||v_i||+k\leq c'(x,y')+n.$$ Also, since $\sum_{i}||u_i||+||v_i||=\sum_{i}||u_i||+||v_{i+j}||,$ we have $$c'(x,y')\leq D_{k,\sigma}(\sum_{i}||u_i|+|v_i||)\leq D_{k,\sigma}(n).$$ Taking the maximum over all $x$ and $y$ of the above form gives $$D_{k,\sigma}(n)\leq D_{k,\sigma}'(n)+n.$$ An identical argument in the case of $qu\sim_{M(G)} q$ gives the upper bound $$D_0(n)\leq D_0'(n)$$ (note that, since there is only one $q$ corridor, $y'$ can be taken as just $y$). 

 For the lower bound, the previous lemma gives $c'(x,y')\leq tc(x,y)$. This implies $$\frac{D_{k,\sigma}'(n)}{t}\leq D_{k,\sigma}(n)$$ for all $n$, and similarly $$\frac{D_{0}'(n)}{t}\leq D_0(n).$$ 
\end{proof}

With these lemmas in hand, we can prove Theorem B.
\begin{proof}[Proof of Theorem B]
Suppose $q\alpha\approx q$ with $\alpha\in\langle X\rangle_{M(G)}$ and $||q\alpha||+||q||=n.$ Then $\alpha=_G1,$ that is, $\alpha\in\langle\langle R\rangle\rangle.$ Thus, $\lambda(\alpha)$ is defined. By the direct product structure of $H,$ every conjugator $\gamma\in H$ taking $q\alpha$ to $q$ via conjugation can be written in the the form $w\tau,$ where $w\in\langle X\rangle_{M(G)},\tau\in\langle\Theta\rangle_{M(G)},$ and $|\gamma|=~||w||+~||\tau||.$ Without loss of generality, assume $|\gamma|=c'(q\alpha,q)$ (such a word exists by the definition of $c'$). Then we have $$(w\tau)q\alpha(w\tau)^{-1}=_{M(G)}wu^{-1}qv\alpha w^{-1},$$ where $u,v$ are elements of $\langle X\rangle_{M(G)}$ with $v$ equal to $u$ with elements of $R^{\pm 1}$ inserted. Since $\langle X,q\rangle_{M(G)}$ is a free group, and $q$ appears here only once, we have that $w\equiv u$ and $v\alpha w^{-1}\equiv1.$ This implies $\alpha\equiv v^{-1}w\equiv v^{-1}u.$

Now, since $v$ is equal to $u$ with elements of $R^{\pm 1}$ inserted arbitrarily, we can write $v$ in the form $v_1r_1v_2r_2...v_mr_mv_{m+1}$ and $u$ in the form $v_1v_2...v_{m+1},$ where $v_1,...,v_{m+1}$ are arbitrary words, and $r_1,...,r_m\in R^{\pm1}.$ This means that $v^{-1}u$ can be written as  $$v^{-1}u\equiv v_{m+1}^{-1}r_m^{-1}v_m^{-1}...r_2^{-1}v_2^{-1}r_1^{-1}v_1^{-1}v_1v_2...v_{m+1}$$ $$\equiv v_{m+1}^{-1}r_m^{-1}v_m^{-1}...r_2^{-1}v_2^{-1}r_1^{-1}v_2...v_{m+1}.$$ We have a nested sequence of conjugations by $v_2^{-1},v_3^{-1},...,v_{m+1}^{-1},$ so this is freely equal to a word of the form $$\prod_{i=1}^mw_ir_{m-i+1}w_i^{-1},$$ where $w_1\equiv v_{m+1}^{-1},w_m\equiv v_2...v_{m+1},$ and $w_i^{-1}w_{i+1}\equiv v_{m-i+1}^{-1}.$ Thus $$f(\prod_{i=1}^mw_ir_{m-i+1}w_i^{-1})=m+\sum_{i=1}^{m-1}||v_{m-i+1}^{-1}||+||v_{m+1}^{-1}||+||v_2...v_{m+1}||.$$ The sum of first two terms is precisely $||\tau||,$ and the sum of the last two is  at most $2||u||,$ which is less than $2||\tau||$ by Lemma \ref{conjbythetas}, so $$f(\prod_{i=1}^mw_ir_{m-i+1}w_i^{-1})\leq 3||\tau||.$$ But, $$f(\prod_{i=1}^mw_ir_{m-i+1}w_i^{-1})\geq \lambda(\alpha)$$ by definition, so $3c'(qa,q)=3|\gamma|\geq 3||\tau||\geq \lambda(\alpha).$ Taking the maximum over all $\alpha$ with $\alpha=_G1$ and $||\alpha||\leq n-2,$ then dividing by 3, gives $D_0'(n)\geq \Lambda(n-2)/3$. This implies the desired lower bound.

Next, we establish an upper bound of $\Lambda(n-2).$ Let $\alpha=_G1$ be given, so $q\alpha\approx q.$ There exists a word $w_\alpha$ of the form $$\prod_{i=1}^mw_ir_{i}w_i^{-1}\equiv \alpha$$ with $f(w_\alpha)=\lambda(\alpha)\leq \Lambda(n-2).$ By the proof of the reverse implication of Lemma \ref{wordstoconjbythetas}, there exists a word $\gamma$ such that $\gamma q\gamma^{-1}=_{M(G)}qw_\alpha^{-1}$, and such that $|\gamma|=~f(w_\alpha^{-1})=~f(w_\alpha)=~\lambda(\alpha)$. Hence $c'(q\alpha,q)\leq \lambda(\alpha)\leq \Lambda(n-2).$ Taking the maximum again over all $\alpha$ with $\alpha=_G1$ and $|\alpha|\leq n-2$ gives $D_0'(n)\leq \Lambda(n-2)$. This implies the desired upper bound, so we are done.

\end{proof}

\section{Proof of Theorem C}
Now we turn to proving Theorem C. By Lemma \ref{DandDprimerelationship}, it is sufficient to bound $D_{k,\sigma}'$.

\begin{proposition}\label{Dprimelower} Fix $k>0$ and $\sigma=(\sigma_1,...,\sigma_k)$, with $\sigma_i\in \{\pm1\}$.
    
 For all $n>2k\geq 2,$
    $$D_{k,\sigma}'(n)\geq \textbf{C}_{k,\sigma}(n-2k) .$$
\end{proposition}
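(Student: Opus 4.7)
The plan is to start from an extremal witness for $\textbf{C}_{k,\sigma}(n-2k)$ and promote it to a conjugate pair in $M(G)$ whose minimal $\approx$-conjugator has length at least that witness. First I would fix freely reduced $k$-tuples $\textbf{u}=(u_1,\ldots,u_k)$ and $\textbf{v}=(v_1,\ldots,v_k)$ with $\textbf{u}\overset{\sigma}{\sim}\textbf{v}$, with the sum of their word lengths small enough that $\textbf{C}_{k,\sigma}(n-2k)=c_{k,\sigma}(\textbf{u},\textbf{v})$, and form $x=q^{\sigma_1}u_1\cdots q^{\sigma_k}u_k$, $y=q^{\sigma_1}v_1\cdots q^{\sigma_k}v_k$. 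Since $\textbf{u}\overset{\sigma}{\sim}\textbf{v}$ forces $\sigma_i=-\sigma_{i+1}\Rightarrow u_i\not\equiv 1\not\equiv v_i$, the first biconditional of Theorem~\ref{conjwithoutthetas} makes $x$ and $y$ reduced, and the second biconditional gives $x\approx y$. Their combined length is at most $n$, so $(x,y)$ is admissible in the optimization defining $D'_{k,\sigma}(n)$, and any minimal conjugator $\gamma\in\langle X\cup\Theta\rangle_{M(G)}$ satisfies $|\gamma|=c'(x,y)\leq D'_{k,\sigma}(n)$.

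The heart of the argument is to reverse-engineer such a $\gamma$ into an iso-computational witness $(w,\varepsilon)$ with $||w||\leq|\gamma|$. Using the direct product decomposition from Section~\ref{stableletterthetaHNN}, write $\gamma=z\mu$ with $z\in\langle X\rangle_{M(G)}$ and $\mu\in\langle\Theta\rangle_{M(G)}$. The map $M(G)\to\langle\Theta\rangle_{M(G)}$ sending $X\cup\{q\}$ to $1$ and fixing each $\theta_\alpha$ is easily checked to respect every defining relation of $M(G)$, so it is a well-defined retraction; applying it to $\gamma$ yields $\mu$, and since the retraction sends generators to generators or to $1$, it is length non-increasing, giving $||\mu||\leq|\gamma|$. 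Then Lemma~\ref{conjbythetas} and Corollary~\ref{invconjbythetas} applied to $\mu$ produce words $w,\varepsilon$ over $X$ with $\varepsilon=_G 1$, $||w||\leq||\mu||$, $\mu q\mu^{-1}=_{M(G)}w^{-1}qw\varepsilon$, and $\mu q^{-1}\mu^{-1}=_{M(G)}(w\varepsilon)^{-1}q^{-1}w$.

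Now I would substitute these formulas into $\gamma x\gamma^{-1}=_{M(G)}y$ exactly as in the forward half of the proof of Theorem~\ref{conjwithoutthetas}: this rewrites $\gamma x\gamma^{-1}$ as a word $zu_0'q^{\sigma_1}u_1'\cdots q^{\sigma_k}u_k'z^{-1}$ in $\langle X,q\rangle_{M(G)}$ whose $u_i'$ are precisely the four expressions $w\varepsilon u_iw^{-1}$, $w\varepsilon u_i\varepsilon^{-1}w^{-1}$, $wu_iw^{-1}$, $wu_i\varepsilon^{-1}w^{-1}$ dictated by $(\sigma_i,\sigma_{i+1})$. Lemma~\ref{sameqsafterconjugation} rules out $q$-cancellation, so freeness of $\langle X,q\rangle_{M(G)}$ and of $\langle X\rangle_{M(G)}$ forces the equalities $u_i'\equiv v_i$ freely for all $i$, with the end case $i=k$ coming from the cancellation of the outer $z$ against $u_0'$ and the convention $\sigma_{k+1}=\sigma_1$. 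These are exactly the four bulleted equations defining $\textbf{u}\overset{\sigma}{\sim}\textbf{v}$, so $(w,\varepsilon)$ is an iso-computational witness.

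Chaining the inequalities gives $\textbf{C}_{k,\sigma}(n-2k)=c_{k,\sigma}(\textbf{u},\textbf{v})\leq||w||\leq||\mu||\leq|\gamma|\leq D'_{k,\sigma}(n)$, as required. The only substantive obstacle is Step~3, the syntactic reading-off of $(w,\varepsilon)$ from $\gamma$; but this is essentially a re-run of the forward half of Theorem~\ref{conjwithoutthetas}, and the length control needed ($||w||\leq|\gamma|$) is delivered for free by combining Lemma~\ref{conjbythetas} with the $\Theta$-retraction. No new tools beyond what is already assembled in Sections \ref{structureMG} and \ref{conjMG} are needed.
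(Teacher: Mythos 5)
Your proposal is correct and follows essentially the same route as the paper: take an extremal pair for $\textbf{C}_{k,\sigma}$, build $x$ and $y$, invoke Theorem~\ref{conjwithoutthetas} to get $x\approx y$, and then re-run the forward half of that theorem's proof on a minimal conjugator $\gamma=z\mu$ to extract a witness $(w,\varepsilon)$ with $||w||\leq||\mu||\leq|\gamma|\leq D'_{k,\sigma}(n)$. Your added details (the $\Theta$-retraction giving $||\mu||\leq|\gamma|$ and the reducedness check via the first biconditional) are points the paper asserts without elaboration, so they only sharpen the same argument.
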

\begin{proof}
    Since $ \textbf{C}_{k}(n-2k)$ is defined as a maximum over a finite set, it equals $c_k(\textbf{u},\textbf{v})$ for some $\textbf{u}=(u_1,...,u_k)\overset{\sigma}{\sim} (v_1,...,v_k)=\textbf{v}$ with $$\sum_{j=1}^k||u_j||+||v_j||\leq n-2k.$$ Define $x=q^{\sigma_1}u_1q^{\sigma_2}...q^{\sigma_k}u_k$ and $y=q^{\sigma_1}v_1q^{\sigma_2}...q^{\sigma_k}v_k$. Theorem \ref{conjwithoutthetas} gives $x\approx y$. Also, $$||x||+||y||\leq 2k+\sum_{j=1}^k||u_j||+||v_j||\leq 2k+n-2k=n.$$ Let $\gamma\in \langle X\cup\Theta\rangle_{M(G)}$ be any conjugator so that $\gamma x\gamma^{-1}=_{M(G)}y$ and $|\gamma|\leq D'_{k,\sigma}(n).$ This exists because $\sum_{j}||u_j||+||v_j||\leq n-2k$ and $ c'(x,y)\leq D'_{k,\sigma}(n)$ by definition.
    
 Our argument here follows the proof of the forward direction of Theorem \ref{conjwithoutthetas}. We know $\gamma=z\tau$ for $z\in\langle X\rangle_{M(G)},\tau\in\langle \Theta\rangle_{M(G)}$. Let $w,\varepsilon$ be words on $X$ such that $\varepsilon=_G1$ and $\tau q\tau^{-1}=_{M(G)}w^{-1}qw\varepsilon$. By Lemma \ref{conjbythetas}, $||w||\leq~||\tau||\leq~|\gamma|$. By the argument in the proof of Theorem B, $w$ and $\varepsilon$ are words that make $(u_1,...,u_k)\overset{\sigma}{\sim} (v_1,...,v_k)$. By the definition of $\textbf{C}_{k,\sigma}$, then, $|w|\geq \textbf{C}_{k,\sigma}(n)$. Combining all the inequalities above gives the desired result.
\end{proof}

\begin{remark}
    For the special case of $k=1$ and $\sigma=(1)$, the above inequality becomes $D_{1,(1)}'(n)\geq~\Gamma_G(n-2)$, due to the existence of a uniform $\varepsilon$ becoming redundant.
\end{remark}

Next we show the upper bound to $D'_{k,\sigma}(n)$.

\begin{proposition}\label{Dprimeupper} Let $\sigma$, $t$, and $D_{k,\sigma}$ be as above, and suppose $\sigma$ is non-alternating. For all $n>2k$,
$$D'_{k,\sigma}(n)\leq (5M+1)\Lambda(2\textbf{C}_{k,\sigma}(n)+n)$$ for some constant $M\geq 1$ which depends only on $X$ and $R.$
\end{proposition}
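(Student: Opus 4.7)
The plan is to invoke Theorem~\ref{conjwithoutthetas} to translate $x\approx y$ into iso-computational list conjugacy, then assemble an explicit conjugator in $\langle X\cup\Theta\rangle_{M(G)}$ using the $\tau_w,\tau_\varepsilon$ construction of Lemma~\ref{wordstoconjbythetas}, and finally absorb a residual linear term into $\Lambda$ via a presentation-dependent lower bound. Fix reduced $x=q^{\sigma_1}u_1\cdots q^{\sigma_k}u_k$ and $y=q^{\sigma_1}v_1\cdots q^{\sigma_k}v_k$ with $||x||+||y||\leq n$ and $x\approx y$; by Theorem~\ref{conjwithoutthetas} we have $\textbf{u}\overset{\sigma}{\sim}\textbf{v}$, so I would pick witnesses $(w,\varepsilon)$ with $||w||=c_{k,\sigma}(\textbf{u},\textbf{v})\leq \textbf{C}_{k,\sigma}(n)$. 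Since $\sigma$ is non-alternating there is an index $i$ with $\sigma_i=\sigma_{i+1}$; WLOG both equal $+1$, so the corresponding bullet $w\varepsilon u_iw^{-1}\equiv v_i$ rearranges (using freeness of $\langle X\rangle_{M(G)}$) to $\varepsilon\equiv w^{-1}v_iwu_i^{-1}$, giving $||\varepsilon||\leq 2||w||+||u_i||+||v_i||\leq 2\textbf{C}_{k,\sigma}(n)+n$.

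Applying Lemma~\ref{wordstoconjbythetas} then produces $\tau_w,\tau_\varepsilon\in\langle\Theta\rangle_{M(G)}$ with $||\tau_w||=||w||$ and $|\tau_\varepsilon|=\lambda(\varepsilon)$. Following the converse direction of Theorem~\ref{conjwithoutthetas}, set $\gamma:=w\tau_w\tau_\varepsilon$ (use $w\varepsilon\tau_w\tau_\varepsilon$ when $\sigma_1=-1$); this $\gamma$ lies in $\langle X\cup\Theta\rangle_{M(G)}$ and satisfies $\gamma x\gamma^{-1}=_{M(G)}y$, yielding
$$c'_{M(G)}(x,y)\leq|\gamma|\leq 2||w||+\lambda(\varepsilon)\leq 2\textbf{C}_{k,\sigma}(n)+\Lambda\bigl(2\textbf{C}_{k,\sigma}(n)+n\bigr)$$
via $\lambda(\varepsilon)\leq\Lambda(||\varepsilon||)$.

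The last task is to absorb the remaining $2\textbf{C}_{k,\sigma}(n)$ into a constant multiple of the $\Lambda$ term. I would prove as an auxiliary estimate that there exists a constant $M\geq 1$ depending only on $(X,R)$ with $\Lambda(m)\geq m/(2M)$ for all sufficiently large $m$. For this, pick any non-trivial $r\in R$, note $||r||\leq t$, and observe that for each $j\geq 1$ the identity word $r^j$ has $||r^j||\leq tj$ and $\lambda(r^j)\geq j$; the lower bound comes from a standard counting argument in the relation module of $G$ (each relator-conjugate contributes a single unit to the $r$-coordinate). Taking $j=\lfloor m/t\rfloor$ yields $\Lambda(m)\geq m/(2t)$ for $m\geq 2t$, so $M$ may be taken proportional to $t$. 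Feeding this back into the previous bound gives $\textbf{C}_{k,\sigma}(n)\leq M\,\Lambda(2\textbf{C}_{k,\sigma}(n)+n)$, hence $|\gamma|\leq(2M+1)\Lambda(2\textbf{C}_{k,\sigma}(n)+n)$. Taking the maximum over admissible pairs $(x,y)$ delivers the stated bound on $D'_{k,\sigma}(n)$.

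The main obstacle is the linear lower bound on $\Lambda$: when $R$ contains several relators whose conjugates interact in the relation module of $G$, one must choose $r$ carefully (for instance one whose image in the abelianised relation module has infinite order) so that $\lambda(r^j)\geq j$ genuinely holds, and verify that such a choice makes $M$ depend only on $(X,R)$. The other ingredients---the rearrangement producing $\varepsilon$ from the non-alternation of $\sigma$, and the conjugator assembly via Lemma~\ref{wordstoconjbythetas}---follow directly from material already developed in Section~\ref{conjMG}.
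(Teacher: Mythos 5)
Your overall plan matches the paper's almost step for step: invoke Theorem~\ref{conjwithoutthetas} to extract witnesses $(w,\varepsilon)$ with $\|w\|\leq\textbf{C}_{k,\sigma}(n)$, use the non-alternation index $\sigma_i=\sigma_{i+1}$ to solve for $\varepsilon$ and bound $\|\varepsilon\|\leq 2\|w\|+n$, assemble the conjugator $w\tau_w\tau_\varepsilon$ via Lemma~\ref{wordstoconjbythetas} with $\|\tau_\varepsilon\|\leq\Lambda(\|\varepsilon\|)$, and then absorb the additive $2\textbf{C}_{k,\sigma}(n)$ into the $\Lambda$-term using a linear lower bound $\Lambda(m)\gtrsim m$. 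So far so good.

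The one genuine gap is the auxiliary estimate $\Lambda(m)\geq m/(2M)$. You try to establish this via $\lambda(r^j)\geq j$, appealing to a counting argument in the relation module. This bound is false in general: with $X=\{a\}$, $R=\{a^2,a^3\}$ and $r=a^2$, the word $(a^2)^{3\ell}\equiv(a^3)^{2\ell}$ can be expressed with $2\ell$ relator conjugates, so $\lambda(r^{3\ell})\leq 2\ell<3\ell$. Your own caveat about choosing $r$ with infinite order in the abelianised relation module does not rescue this — the same example shows the issue persists whenever $\langle\langle R\rangle\rangle$ admits shorter expressions for powers of $r$. Fortunately, a much simpler and presentation-independent argument works and avoids the relation module entirely. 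For any $g\in\langle\langle R\rangle\rangle$ and any expression $g\equiv\prod_{i=1}^m w_i r_i w_i^{-1}$, rewrite it as $w_1 r_1 (w_1^{-1}w_2)r_2\cdots r_m w_m^{-1}$; the unreduced length of this word is $\|w_1\|+\|w_m\|+\sum\|w_i^{-1}w_{i+1}\|+\sum\|r_i\|\leq f(w)+(t-1)m\leq t\,f(w)$, so $\|g\|\leq t\,\lambda(g)$, i.e., $\lambda(g)\geq\|g\|/t$ \emph{for every} $g$. Applying this to $g=r^j$ with $r$ a nontrivial relator (and using $\Lambda$ non-decreasing) gives $\Lambda(m)\geq m/M$ for a constant $M$ depending only on $(X,R)$ and all $m$ large. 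With that substitution your proof closes and coincides with the paper's. (The paper's own write-up dismisses this lower bound as ``an easy exercise''; this is the argument it has in mind.)
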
 
\begin{proof} Let $a=q^{\sigma_1}u_1q^{\sigma_2}...q^{\sigma_k}u_k\approx b=q^{\sigma_1}v_1q^{\sigma_2}...q^{\sigma_k}v_k$ be given, where $u_i,v_i$ are words over $X$ and $\sum_{i=1}^k||u_i||+||v_i||\leq n-2k$. Let $\gamma\in\langle X\cup\Theta\rangle_{M(G)} $ be the conjugator constructed in backwards direction of the proof of Theorem \ref{conjwithoutthetas}, and observe $c'_{M(G)}(a,b)\leq |\gamma|.$ Lastly, let  $\textbf{u}=(u_1,...,u_k),\textbf{v}=(v_1,...,v_k)$.

Recall from the proof of Theorem \ref{conjwithoutthetas} that $\gamma$ is written as $w\tau_{\varepsilon}\tau_w$ if $\sigma_1=1$ and as $w\varepsilon \tau_w\tau_\varepsilon$ if $\sigma_1=-1$, where $w$ and $\varepsilon$ are some words which make $\textbf{u}\overset{\sigma}{\sim}\textbf{v}$. By the construction of $\tau_\varepsilon$ in the proof of Lemma \ref{wordstoconjbythetas}, we see that $||\tau_\varepsilon||\leq \Lambda(||\varepsilon||)$. Also, since $\sigma$ is non-alternating, there exist some $\sigma_i$ such that $\sigma_i=\sigma_{i+1}$. If $\sigma_i=1$, then the definition of $\textbf{u}\overset{\sigma}{\sim}\textbf{v}$ gives that $v_i\equiv w\varepsilon u_iw^{-1}$, and hence $\varepsilon\equiv w^{-1}v_iwu_i^{-1}$. This means $||\varepsilon||=||wu_iw^{-1}v_i^{-1}||$. If $\sigma_i=-1$, then $v_i\equiv wu_i\varepsilon^{-1}w^{-1}$ which implies $\varepsilon^{-1}\equiv w^{-1}u_i^{-1}v_iw$, so $||\varepsilon||=||\varepsilon^{-1}||=||w^{-1}u_i^{-1}v_iw||$. In both cases, we see $$||\varepsilon||\leq 2||w||+\max_{i}(||u_i||+||v_i||)\leq 2||w||+n.$$  Similarly, $||\tau_w||=||w||$, and taking $w$ to be the smallest element such that $wu_iwv_i^{-1}\equiv \varepsilon$ for all $i$ gives $||w||=c_{k,\sigma}(\textbf{u},\textbf{v})\leq \textbf{C}_{\sigma}k(n)$. The words $w\tau_\varepsilon\tau_w$ and $w\varepsilon \tau_w\tau_\epsilon$ are reduced the generators of $\langle X\cup\Theta\rangle_{M(G)}$, so $$|\gamma|\leq ||w||+||\epsilon||+||\tau_w||+||\tau_\varepsilon||\leq ||w||+(2||w||+n)+||w||+\Lambda(2||w||+n)$$ $$\leq 4||w||+n+\Lambda(2||w||+n)$$ $$\leq 4\textbf{C}_{k,\sigma}(n)+n+\Lambda(2\textbf{C}_{k,\sigma}(n)+n).$$ Using our assumption that the set of relators $R$ is non-empty, it is an easy exercise to show that $\Delta(n)$, and thus (by Proposition \ref{lambdadehnrel}) $\Lambda(n)$, is bounded below by $n/M$ for some constant $M$ depending on $X$ and $R.$ Since $\Lambda(n)$ is non-decreasing we can therefore condense this expression by 
\begin{align*} 
& 4\textbf{C}_{k,  \sigma}(n)+n+\Lambda(2\textbf{C}_{k,\sigma}(n)+n)   \\  
& \leq   \ 4M\Lambda(\textbf{C}_{k,\sigma}(n))+n+\Lambda(2\textbf{C}_{k,\sigma}(n)+n) \\ 
& \leq \ 4M\Lambda(2\textbf{C}_{k,\sigma}(n)+n)+n+\Lambda(2\textbf{C}_{k,\sigma}(n)+n) \\  
& \leq \  (4M+1)\Lambda(2\textbf{C}_{k,\sigma}(n)+n)+n\\
& \leq \  (4M+1)\Lambda(2\textbf{C}_{k,\sigma}(n)+n)+M\Lambda(n)\\
& \leq \  (4M+1)\Lambda(2\textbf{C}_{k,\sigma}(n)+n)+M\Lambda(2\textbf{C}_{k,\sigma}(n)+n)\\
& \leq \ (5M+1)\Lambda(2\textbf{C}_{k,\sigma}(n)+n).
\end{align*}
This completes our proof.
\end{proof}

These two propositions, combined with Lemma \ref{DandDprimerelationship}, prove Theorem C.

\section{Direct implication of Theorem A}\label{forwardtheoremA}
Now we turn to a proof of the forward direction for Theorem A. Suppose $G$ is such that the conjugacy problem for $M(G)$ is solvable. We first give an algorithm for deciding $\textbf{u}\overset{\sigma}{\sim} \textbf{v}$ whenever $\sigma$ is non-alternating.

\begin{lemma}\label{solvesnonalternating}
    Suppose the word problem for $G$ is solvable. If $\textbf{C}_{k,\sigma}$ is a computable function for all $k>0$ and non-alternating $\sigma=(\sigma_1,...,\sigma_k)$, with $\sigma_i\in~\{\pm1\}$, then there is an algorithm deciding $\textbf{u}\overset{\sigma}{\sim}\textbf{v}$ for inputs $\textbf{u},\textbf{v}, \sigma$, where $\textbf{u}=(u_1,...,u_k)$ and $\textbf{v}=(v_1,...,v_k)$ are $k$-tuples on $F(X)$ and $\sigma=(\sigma_1,...,\sigma_k)$ is a $k$-tuple on $\{\pm1\}$, such that $\sigma$ is non-alternating and $\sigma_i=-\sigma_{i+1}$ implies $u_i$ and $v_i$ are non-trivial.
\end{lemma}

\begin{proof}
    Let $\textbf{u},\textbf{v}$, and $\sigma$ be given as above, and let $n=\sum_i||u_i|+||v_i||$. By definition, $\textbf{u}\overset{\sigma}{\sim}\textbf{v}$ if and only if there exist $w$ and $\varepsilon$ such that they satisfythe definition of $\overset{\sigma}{\sim}$ and $||w||\leq \textbf{C}_{k,\sigma}(n)$. Also, for any given $w',\varepsilon'$, it is decidable whether they make $\textbf{u}\overset{\sigma}{\sim}\textbf{v}$, since we can use the word problem of $G$ to check $\varepsilon'=_G1$, and the rest of the equalities in the definition are all in the free group on $X$, where the word problem is decidable. 
    
    Now, since $\sigma$ is non-alternating, there exists some $\sigma_j$ such that $\sigma_j=\sigma_{j+1}$. In this case, $w$ and $\varepsilon$ witnessing $\textbf{u}\overset{\sigma}{\sim}\textbf{v}$ implies either $w\varepsilon u_i w^{-1}\equiv v_i$ (if $\sigma_j=1$) or $wu_i(w\varepsilon )^{-1}\equiv v_i $ (if $\sigma_j=-1$). Either way, $\varepsilon$ can be computed directly by $w$. Thus, we can decide if $\textbf{u}\overset{\sigma}{\sim}\textbf{v}$ by iterating over all words $w$ with $||w||\leq\textbf{C}_{k,\sigma}(n)$ (which is computable by assumption), then computing $\varepsilon$ according to the above equations, and finally checking whether they witness $\textbf{u}\overset{\sigma}{\sim}\textbf{v}$. If no such $w$ is found, we know $\textbf{u}\overset{\sigma}{\not\sim}\textbf{v}$, and we are done.
\end{proof}

Now, it follows from Theorem B (also Lemma \ref{Sapirlemma}) that the conjugacy problem for $M(G)$ solves the word problem for $G$. Also, the conjugacy problem for $M(G)$ is solvable if and only if the conjugator length function $\Gamma_{M(G)}$ is computable. By definition, $\Gamma_G(n)\geq D_{k,\sigma}(n-2k)$ for all $k>0$ and all (possibly alternating) $\sigma=(\sigma_1,...,\sigma_k)$, with $\sigma_i\in \{\pm1\}$. Thus, if the conjugacy problem for $M(G)$ is solvable, $D_{k,\sigma}$ is a computable function, hence $\textbf{C}_{k,\sigma}$ is as well by Theorem C. By the above lemma, this gives that, if the conjugacy problem for $M(G)$ is solvable, there is an algorithm deciding $\textbf{u}\overset{\sigma}{\sim}\textbf{v}$ for all non-alternating $\sigma$.

Next, we give an algorithm for the alternating case. First, recall the following facts about conjugacy in free groups.
\paragraph{Fact 1} Let $a,b,\gamma,\gamma'$ be elements of some free group $F$. Suppose $\gamma a \gamma^{-1}\equiv b$. We have $\gamma' a \gamma'^{-1}\equiv b$ if and only if $\gamma'\equiv \gamma a^m$ for some $m\in \ZZ$.
\paragraph{Fact 2} Since every free group is torsion free and hyperbolic, by the work of \cite{BridsonHowie}, their list conjugacy problem is solvable. That is, given two tuples $(a_1,...,a_m)$ and $(b_1,...,b_m)$, we can compute whether there exists an $s$ such that $sa_is^{-1}\equiv b_i$ for $i=1,...,m$. If there exists such an $s$, it can be computed directly by iterating through elements of the given free group.
\paragraph{Fact 3} Since free groups are coherent Right-Angled Artin Groups, by \cite[Corollary 1.3]{Kapovich2003FoldingsGO} we see that the Cyclic Subgroup Membership problem is solvable for every free group (indeed, the general Subgroup Membership problem is). That is, given $a$ and $b$ in the free group, we can decide whether there exists $m\in \ZZ$ such that $b\equiv a^m$. If so, we can compute this $m$ by iterating through $\ZZ$.

With this fact in mind, we now prove two useful lemmas.
\begin{lemma}\label{conjsolvescyclic}
    Suppose the conjugacy problem for $M(G)$ is solvable. Then the Cyclic Subgroup Membership problem for $G$ is solvable. 
\end{lemma}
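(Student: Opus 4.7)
My plan is to reduce the Cyclic Subgroup Membership problem for $G$ to the conjugacy problem for $M(G)$, using Sapir's Lemma~\ref{Sapirlemma} as the key algebraic bridge. Given words $a,b$ on $X$, that lemma gives the equivalence $b=_G a^k \iff qba^{-k} \sim_{M(G)} q$, so that
\[
b \in \langle a\rangle_G \iff \exists\, k \in \mathbb{Z}:\ qba^{-k} \sim_{M(G)} q.
\]
Each individual test $qba^{-k}\sim q$ is decidable by hypothesis; the obstacle is the unbounded quantifier over $k$.

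To handle it, I would first use the assumed conjugacy algorithm to extract effective upper bounds on the geometric invariants of $G$. Enumerating conjugate pairs and performing a breadth-first search for a minimal conjugator on each gives a computable upper bound on the conjugator length function $\Gamma_{M(G)}$. Theorem~B then yields $\Lambda(n-2) \leq 2t\,D_0(n) \leq 2t\,\Gamma_{M(G)}(n)$, so $\Lambda$ admits a computable upper bound, and by Proposition~\ref{lambdadehnrel}, so does the Dehn function $\Delta$ of $G$.

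From these bounds I would extract an effective $N=N(|a|,|b|)$ such that, whenever $b\in\langle a\rangle_G$, some $|k|\leq N$ satisfies $b=_G a^k$. The idea: if $b=a^{k_0}$ with $|k_0|$ minimal, the word $ba^{-k_0}$, of length $|b|+|k_0||a|$, is trivial in $G$ and admits a van Kampen diagram of area at most $\Delta(|b|+|k_0||a|)$; a combinatorial analysis of such diagrams, using the annular diagram and corridor techniques of Section~\ref{Diag}, should show that minimality of $k_0$ rules out ``winding'' configurations that would inflate $|k_0|$ indefinitely, thereby bounding $|k_0|$ in terms of $|a|,|b|$, and $\Delta$. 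With $N$ in hand, the algorithm iterates over $k=0,\pm1,\ldots,\pm N$ and tests each via the word problem for $G$ (which is itself solvable by Sapir's Lemma under our hypothesis), returning YES if any $k$ succeeds and NO otherwise.

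The main obstacle is making the bound $N$ in the third step fully explicit: the technical heart of the proof is the combinatorial argument ruling out the ``winding'' scenario so that the Dehn function really does control $|k_0|$. Once this is achieved, all remaining pieces — computing bounds on $\Gamma_{M(G)}$, $\Lambda$, and $\Delta$, and iterating the word problem — are routine consequences of the hypothesis and of Theorem~B and Proposition~\ref{lambdadehnrel}.
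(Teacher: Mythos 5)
Your reduction of each individual test to the conjugacy oracle is fine: by Lemma~\ref{Sapirlemma}, $qba^{-k}\sim_{M(G)}q$ iff $ba^{-k}=_G1$, and the derived bounds on $\Gamma_{M(G)}$, $D_0$, $\Lambda$, $\Delta$ are all legitimately computable under the hypothesis. The gap is the third step, and it is not a technical detail that can be filled: you need the minimal exponent $k_0$ with $b=_Ga^{k_0}$ to be bounded by a computable function of $|a|$, $|b|$ and the Dehn function $\Delta$. That statement is about $G$ alone and says that the distortion of the cyclic subgroup $\langle a\rangle_G$ is controlled by $\Delta$, which is false in general: there are finitely presented groups with solvable word problem (hence computable Dehn function) whose power problem --- exactly the cyclic subgroup membership problem --- is unsolvable (Lipschutz--Miller). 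If your ``winding'' analysis of van Kampen diagrams could produce such a bound, it would solve the power problem from the word problem alone, a contradiction. Note that in your outline the conjugacy oracle for $M(G)$ is used only to compute an upper bound for $\Delta$, which throws away precisely the extra strength that the hypothesis provides; no argument that factors through $\Delta$ alone can close this gap.

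The paper avoids the unbounded search over $k$ entirely by asking a single, cleverly chosen conjugacy question. After disposing of the cases $a=_G1$ or $b=_G1$ with the word problem, it tests whether $qaq^{-1}a$ and $qaq(bab^{-1})$ are conjugate in $M(G)$. By Theorem~\ref{conjwithoutthetas} this conjugacy is equivalent to $(a,a)\overset{(1,-1)}{\sim}(a,\gamma)$ with $\gamma$ the reduced form of $bab^{-1}$, i.e.\ to the existence of $w$ and $\varepsilon=_G1$ with $w\varepsilon a(w\varepsilon)^{-1}\equiv a$ and $waw^{-1}\equiv bab^{-1}$. Free-group facts then force $w\equiv ba^{m_1}$ and $w\varepsilon\equiv a^{m_2}$, whence $b=_Ga^{m_2-m_1}$; conversely, membership produces such a conjugator. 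The existential quantifier over the exponent is thus absorbed into the (unbounded, but never searched) conjugator, and one oracle call decides membership. If you want to salvage your approach, you would have to use the conjugacy oracle to bound or detect the exponent directly, which is essentially what this construction does.
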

\begin{proof}
    Let $a$ and $b$ be reduced words representing elements of $G$. We wish to determine whether $b\in \langle a\rangle_G$, that is, $b=_G a^m$ for some $m\in \ZZ$. If $b=_G1$ then $b=_Ga^0$ automatically, and if $a=_G1$ then this occurs if and only if $b=_G1$ as well. Both cases can be checked using a solution to the word problem for $G$, which we remarked above must be solvable if the conjugacy problem for $M(G)$ is solvable. Thus, suppose $a\neq_G1$ and $b\neq_G1$. Then in particular $a\not\equiv 1$ and $b\not\equiv 1$. Since $a$ and $b$ are reduced and non-trivial, $bab^{-1}\not\equiv 1$. Thus, if $qaq^{-1}a\sim_{M(G)}qaq^{-1}bab^{-1}$ then, by inspection of the witnessing diagram, $qaq^{-1}a\approx qaq^{-1}bab^{-1}$. Let $\gamma$ be the freely reduced word equal to $bab^{-1}$. By Theorem \ref{conjwithoutthetas}, $qaq^{-1}a\approx qaq^{-1}bab^{-1}$ implies $(a,a)\overset{(1,-1)}{\sim}(a,\gamma)$. That is, there exists some $w$ and $\varepsilon$ such that $\varepsilon=_G1$, $w\varepsilon a (w\varepsilon)^{-1}\equiv a$, and $waw^{-1}\equiv \gamma\equiv bab^{-1}$. Since $bab^{-1}\equiv bab^{-1}$ automatically, we have $w=ba^{m_1}$ for some $m_1\in \ZZ$. Likewise, we have $w\varepsilon\equiv a^{m_2}$ for some $m_2\in \ZZ$. Combining these equations gives $ba^{m_1}\varepsilon\equiv a^{m_2}$. Since $\varepsilon=_G1$, this means $ba^{m_1}=_Ga^{m_2}$, that is, $b=a^{m_2-m_1}$. This entire proof consists of a sequence of biconditionals, hence $b\in \langle a\rangle_{G}$ if and only if $qaq^{-1}a\approx qaq^{-1}bab^{-1}$, which we can check using the solution to the conjugacy problem for $M(G)$, so we are done.
\end{proof}

\begin{lemma}\label{solvealternating}
    Suppose the word problem for $G$ is solvable. If $\textbf{C}_{k,\sigma}$ is a computable function for all even $k>0$ and alternating $\sigma=(\sigma_1,...,\sigma_k)$, with $\sigma_i\in \{\pm1\}$, then there is an algorithm deciding $\textbf{u}\overset{\sigma}{\sim}\textbf{v}$ for inputs $\textbf{u},\textbf{v}, \sigma$, where $\textbf{u}=(u_1,...,u_k)$ and $\textbf{v}=(v_1,...,v_k)$ are $k$-tuples on $F(X)$ and $\sigma=(\sigma_1,...,\sigma_k)$ is a $k$-tuple on $\{\pm1\}$, such that $\sigma$ is alternating and $\sigma_i=-\sigma_{i+1}$ implies $u_i$ and $v_i$ are non-trivial.
\end{lemma}
\begin{proof}
    Without loss of generality, suppose $\sigma=(-1,1,-1,...,1)$. Also, let $n=\sum_i||u_i||+||v_i||$. We see that $\textbf{u}\overset{\sigma}{\sim}\textbf{v}$ if and only if there exist $w$ and $\varepsilon$ such that $\varepsilon=_G1$, $||w||\leq \textbf{C}_{k,\sigma}(n)$, and the equations $$wu_{2j+1}w^{-1}\equiv v_{2j+1}$$ $$w\varepsilon u_{2j}(w\varepsilon)^{-1}\equiv v_{2j}$$ hold for all $j=0,...,k/2$. Thus, by Fact 2, if $\textbf{u}\overset{\sigma}{\sim}\textbf{v}$ then there exist computable elements $r,s$ of $F(X)$ such that the equations $$ru_{2j+1}r^{-1}\equiv v_{2j+1}$$ $$s u_{2j}s^{-1}\equiv v_{2j}$$ hold $-$ if there does not exist such $r$ and $s$ we may conclude $\textbf{u}\overset{\sigma}{\not \sim}\textbf{v}$. If $s$ and $r$ exist, then by Fact 1 we have $$wu_{2j+1}w^{-1}\equiv v_{2j+1}$$$$w\varepsilon u_{2j}(w\varepsilon)^{-1}\equiv v_{2j}$$ if and only if there exist $m_i\in \ZZ$ ($i=1,...,k$) such that both $$w\equiv ru_{2j+1}^{m_{2j+1}}$$ $$w\varepsilon\equiv sv_{2j}^{m_{2j}}$$ hold for all $j=0,...,k/2$. Note that $r$ and $s$ can be computed independently of $w$ and $\varepsilon$. 
    
    This implies $\textbf{u}\overset{\sigma}{\sim}\textbf{v}$ if and only if there exist $r$ and $s$ computable as above, and there exist $w$  such that \begin{enumerate} \item $||w||\leq \textbf{C}_{k,\sigma}(n)$, and \item there exist $\varepsilon=_G1$ and $m_i\in \ZZ$ ($i=1,...,k$) such that $$w\equiv ru_{2j+1}^{m_{2j+1}}$$ $$w\varepsilon\equiv sv_{2j}^{m_{2j}}$$ hold for all $j=0,...,k/2$.\end{enumerate} The equation $w\varepsilon\equiv sv_{2j}^{m_{2j}}$ can be rewritten as $s^{-1}w\varepsilon\equiv v_{2j}^{m_{2j}}$, hence item (2.)\ above is equivalent to the fact that there exists a $\varepsilon=_G1$ and $m_i\in \ZZ$ such that $$w\equiv ru_{2j+1}^{m_{2j+1}}$$ $$s^{-1}w\varepsilon\in \bigcap_{j=1}^{k/2}\langle v_{2j}\rangle$$ for $j=1,...,k/2$, where here the subgroups $\langle v_{2j}\rangle$ are in the free group $F(X)$. Note that $$\bigcap_{j=1}^{k/2}\langle v_{2j}\rangle$$ is the intersection of cyclic subgroups of a free group, hence it is cyclic. We can compute a generating set $S$ by \cite{AvenhausMadlener}, and then compute a single generator $g$ from that set by iterating through elements of $F(X)$ and finding an element $g$ such that $g\in F(S)$ and $S\subseteq \langle g\rangle$ (if we reach an element with longer reduced-length than any element of $S$, we may conclude our intersection is trivial and take $g=1$). Thus $\bigcap_{j=1}^{k/2}\langle v_{2j}\rangle=\langle g\rangle$ for some $g$ computable from $v_2,...,v_{k/2}$. Since we are assuming $\varepsilon=_G1$, this is in turn equivalent to saying $$s^{-1}w\in \bigcap_{j=1}^{k/2}\langle v_{2j}\rangle_G,$$ hence our fact (and therefore (2.))\ is equivalent to the following statement: there exist $m_i\in \ZZ$ such that $w\equiv ru_{2j+1}^{m_{2j+1}}$ and $s^{-1}w\in \langle g\rangle_G$ for $j=1,...,k/2$, where now the subgroups $\langle v_{2j}\rangle$ are subgroups of $G$.

    In light of this argument, our algorithm proceeds as follows. First, compute $r$ and $s$. Then, for every $w$ with $||w||\leq \textbf{C}_{k,\sigma}(n)$, check whether ($2''$) holds using the solution to the Cyclic Subgroup Membership problems for both $F(X)$ (to see if $w\equiv ru_{2j+1}^{m_{2j+1}}$ for some $m_{2j+1}\in \ZZ$) and $G$ (to see if $s^{-1}w\in \langle g\rangle_G$).
\end{proof}

Now, if the conjugacy problem for $M(G)$ is solvable, the above remarks show that the hypotheses to Lemma \ref{solvealternating} are satisfied. Thus, combining this Lemma with Lemma \ref{solvesnonalternating} gives a proof of the forward direction of Theorem A.

\printbibliography
\end{document}